\newcommand{\mylabel}[2]{#2\def\@currentlabel{#2}\label{#1}}
\definecolor{teal}{rgb}{0.0, 0.5, 0.5}
\newcounter{mnotecount}[section]
\newcommand{\rmnote}[1]{}
\DeclareFontFamily{U}{mathb}{\hyphenchar\font45}
\DeclareFontShape{U}{mathb}{m}{n}{
	<5> <6> <7> <8> <9> <10> gen * mathb
	<10.95> mathb10 <12> <14.4> <17.28> <20.74> <24.88> mathb12
}{}
\DeclareSymbolFont{mathb}{U}{mathb}{m}{n}
\let\dot\relax
\DeclareMathAccent{\dot}{0}{mathb}{"39}
\let\ddot\relax
\DeclareMathAccent{\ddot}{0}{mathb}{"3A}
\let\dddot\relax
\DeclareMathAccent{\dddot}{0}{mathb}{"3B}
\let\ddddot\relax
\DeclareMathAccent{\ddddot}{0}{mathb}{"3C}
\theoremstyle{plain}
\newtheorem*{theorem*}{Theorem}
\newtheorem{theorem}{Theorem}[section]
\newtheorem*{lemma*}{Lemma}
\newtheorem{lemma}[theorem]{Lemma}
\newtheorem*{assumption*}{Assumption}
\newtheorem*{proposition*}{Proposition}
\newtheorem{proposition}[theorem]{Proposition}
\newtheorem*{corollary*}{Corollary}
\newtheorem{corollary}[theorem]{Corollary}
\newtheorem*{claim*}{Claim}
\newtheorem*{step*}{Step}
\newtheorem*{conjecture*}{Conjecture}
\newtheorem*{question*}{Question}
\newtheorem*{result*}{Result}
\newtheorem{supplement}[theorem]{Supplement}
\theoremstyle{definition}
\newtheorem*{definition*}{Definition}
\newtheorem*{example*}{Example}
\newtheorem*{algorithm*}{Algorithm}
\newtheorem*{remark*}{Remark}
\newtheorem*{remarks*}{Remarks}
\newtheorem{remark}[theorem]{Remark}
\newtheorem*{convention*}{Convention}
\numberwithin{equation}{section}
\def\al{\alpha}
\def\be{\beta}
\def\ga{\gamma}
\def\de{\delta}
\def\ep{\epsilon}
\def\ve{\varepsilon}
\renewcommand{\ep}{\ve}
\def\et{\eta}
\def\th{\theta}
\def\vt{\vartheta}
\def\ka{\kappa}
\def\la{\lambda}
\def\rh{\rho}
\def\si{\sigma}
\def\ph{\phi}
\def\vh{\varphi}
\renewcommand{\ph}{\vh}
\def\ch{\chi}
\def\om{\omega}
\def\Ga{\Gamma}
\def\La{\Lambda}
\def\Om{\Omega}
\def\C{\mathbb{C}}
\def\N{\mathbb{N}}
\def\R{\mathbb{R}}
\def\cA{\mathcal{A}}
\def\cD{\mathcal{D}}
\def\cE{\mathcal{E}}
\def\cF{\mathcal{F}}
\def\cG{\mathcal{G}}
\def\cH{\mathcal{H}}
\def\cP{\mathcal{P}}
\def\fE{\mathfrak{E}}
\def\fM{\mathfrak{M}}
\def\fN{\mathfrak{N}}
\def\db{\ol \partial}
\renewcommand{\Im}{\on{Im}}
\def\<{\langle}
\def\>{\rangle}
\renewcommand{\o}{\circ}
\renewcommand{\preceq}{\preccurlyeq}
\def\ol{\overline}
\def\ul{\underline}
\let\on=\operatorname
\newcommand{\sr}[1]%
{\ifmmode{}^\dagger\else${}^\dagger$\fi\ifvmode
	\vbox to 0pt{\vss
		\hbox to 0pt{\hskip\hsize\hskip1em
			\vbox{\hsize3cm\raggedright\pretolerance10000
				\noindent #1\hfill}\hss}\vss}\else
	\vadjust{\vbox to0pt{\vss%
			\hbox to 0pt{\hskip\hsize\hskip1em%
				\vbox{\hsize3cm\raggedright\pretolerance10000%
					\noindent #1\hfill}\hss}\vss}}\fi%
}
\def\A{\;\forall}
\def\E{\;\exists}
\providecommand{\mapsfrom}{\kern.2em%
	\setbox0=\hbox{$\leftarrow$\kern-.10em\rule[0.26mm]{0.1mm}{1.3mm}}\box0%
	\kern.3em}
\title[The Borel map in the mixed Beurling setting]
{The Borel map in the mixed Beurling setting}
\author[D.N.~Nenning, A.~Rainer, and G.~Schindl]{David Nicolas Nenning, Armin Rainer, and Gerhard Schindl}
\address{Fakult\"at f\"ur Mathematik, Universit\"at Wien, Oskar-Morgenstern-Platz~1, A-1090 Wien, Austria.}
\email{david.nicolas.nenning@univie.ac.at}
\email{armin.rainer@univie.ac.at}
\email{gerhard.schindl@univie.ac.at}
\begin{document}

	\begin{abstract}
		The Borel map takes a smooth function to its infinite jet of derivatives (at zero).
		We study the restriction of this map to ultradifferentiable classes of Beurling type
		in a very general setting which encompasses the classical Denjoy--Carleman and
		Braun--Meise--Taylor classes.
		More precisely, we characterize when the Borel image of one class covers the sequence space of another class
		in terms of the two weights that define the classes.
		We present two independent solutions to this problem, one by reduction to the Roumieu case and
		the other by dualization of the involved Fr\'echet spaces, a Phragm\'en--Lindel\"of theorem, and H\"ormander's
		solution of the $\db$-problem.
	\end{abstract}

	\thanks{AR was supported by FWF-Project P 32905-N, DNN and GS by FWF-Project P 33417-N}
	\keywords{Ultradifferentiable function classes, Beurling type, Borel map, extension results, mixed setting, controlled loss of regularity}
	\subjclass[2020]{
		26E10, 
		46A13, 
		46E10,  
		46E25  
	}
	\date{\today}

	\maketitle


	\section{Introduction}

	The Borel map $j^\infty_0 : C^\infty(\R) \to \C^\N$ at $0$ is defined by $j^\infty_0 f := (f^{(n)}(0))_{n \in \N}$.
	We will study the restriction of $j^\infty_0$ to ultradifferentiable classes in a general setting
	which allows us to treat the classical Denjoy--Carleman and Braun--Meise--Taylor classes
	at the same time. Our classes are defined in terms of one-parameter families $\fM = (M^{(x)})_{x>0}$,
	$\fN = (N^{(x)})_{x>0}$, etc.,
	of weight sequences; we call them weight matrices.
	In this article,
	we are mostly interested in classes of Beurling type
	\[
	\cE^{(\fN)}(\R) := \Big\{f \in C^\infty(\R) : \A j,k,l \in \N_{\ge 1}:
	\sup_{x \in [-j,j]}\sup_{p \in \N} \frac{|f^{(p)}(x)|}{(\frac{1}{l})^p
		N^{(\frac{1}{k})}_p} < \infty  \Big\},
	\]
	but we shall have to use also results on the Roumieu counterpart
	\[
	\cE^{\{\fN\}}(\R) := \Big\{f \in C^\infty(\R) : \A j \in \N_{\ge 1}  \E k,l \in \N_{\ge 1}:
	\sup_{x \in [-j,j]}\sup_{p \in \N} \frac{|f^{(p)}(x)|}{l^p N^{(k)}_p} < \infty  \Big\}.
	\]
	By definition, the image $j^\infty_0 \cE^{(\fN)}(\R)$ is contained in the sequence space
	\[
	\La^{(\fN)} := \Big\{a=(a_p) \in \C^\N : \A k,l \in \N_{\ge 1}:
	\sup_{p \in \N} \frac{|a_p|}{(\frac{1}{l})^p
		N^{(\frac{1}{k})}_p} < \infty \Big\};
	\]
	and likewise $j^\infty_0 \cE^{\{\fN\}}(\R) \subseteq \La^{\{\fN\}}$, where $\La^{\{\fN\}}$ is defined analogously.
	The goal of this paper is to
	find necessary and sufficient conditions for
	\begin{equation}
		\label{eq:BorelImage}
		\Lambda^{(\fM)} \subseteq j^\infty_0 \cE^{(\fN)}(\R),
	\end{equation}
	in terms of $\fM$ and $\fN$.

	The Roumieu case is well understood, see our recent article \cite{RoumieuMoment}:
	under some mild assumptions on $\fM$ and $\fN$, we have
	$\Lambda^{\{\fM\}} \subseteq j^\infty_0 \cE^{\{\fN\}}(\R)$ if and only if
	\begin{equation*}
		\A x>0 \E y>0 : ~ M^{(x)} \prec_{SV} N^{(y)},
	\end{equation*}
	where $M^{(x)} \prec_{SV} N^{(y)}$ means
	\begin{equation*}
		\E s \in \N:~ \sup_{j\ge 1}\sup_{0\le i < j}
		\Big( \frac{M^{(x)}_j}{s^j N^{(y)}_i} \Big)^{\frac{1}{j-i}}\frac{1}{j}\sum_{k = j}^\infty \frac{N^{(y)}_{k-1}}{N^{(y)}_k}<\infty,
	\end{equation*}
	a condition introduced by Schmets and Valdivia in \cite{surjectivity}.

	The characterization of \eqref{eq:BorelImage} is considerably more difficult (partly,
	because the image of an intersection is, in general, smaller than the intersection of the images).
	We solve this problem in two independent ways:
	\begin{enumerate}[label={\thetag{\arabic*}}]
		\item The first method reduces the Beurling to the Roumieu problem, and uses the solution of the latter.
		This is a well-known approach which has been used, in various disguises, in several settings; see e.g.\
		\cite{ChaumatChollet94}, \cite{surjectivity}, \cite{mixedramisurj}, and \cite{Rainer:2020ab}.
		The additional parameter (i.e., $x$ in the weight matrix) makes this delicate reduction quite involved.
		As a result we prove in \Cref{thm:mainthm1} that (again under mild assumptions) \eqref{eq:BorelImage} is equivalent to
		\begin{equation*}
			\A y>0 \E x>0 : ~ M^{(x)} \prec_{SV} N^{(y)}.
		\end{equation*}
		\item The second approach is based on dualization of \eqref{eq:BorelImage} and identification of the
		strong duals $(\La^{(\fM)})'$ and $\cE^{(\fN)}(\R)'$ with suitable spaces of entire functions.
		This strategy has been implemented by \cite{BonetMeiseTaylorSurjectivity} (for Braun--Meise--Taylor classes,
		following \cite{Carleson61} and \cite{Ehrenpreis70}).
		In fact, our analysis is based on the abstract functional-analytic result \cite[Corollary 2.3]{BonetMeiseTaylorSurjectivity}
		(which we restate in \Cref{prop:inclusion}).
		It translates the problem to a question about bounded sets in the mentioned spaces of entire functions,
		where a Phragm\'en--Lindel\"of theorem and H\"ormander's solution of the $\db$-problem
		can be brought to bear.
		We find in \Cref{thm:mainthm2} that (under other mild assumptions) \eqref{eq:BorelImage} is equivalent to
		\begin{equation*}
			\A y>0 \E x>0 : ~ M^{(x)} \prec_{L} N^{(y)}.
		\end{equation*}
	\end{enumerate}
	The condition $M^{(x)} \prec_{L} N^{(y)}$ means
	\begin{equation*}
		\E C>0 \A s\ge 0:~ \frac{s}{\pi} \int_{-\infty}^\infty \frac{\om_{N^{(y)}}(t)}{t^2+s^2}\,dt\le\om_{M^{(x)}}(Cs)+C,
	\end{equation*}
	where $\om_{M}(t) := \sup_{k \in \N} \log (\frac{t^k}{M_k})$ is the pre-weight function associated with a weight sequence $M$.
	It appears as (2.14') in Langenbruch's paper \cite{Langenbruch94} and
	it is closely related to the condition appearing in \cite{BonetMeiseTaylorSurjectivity},
	but with a little twist; see \Cref{constantsinout} and \Cref{sec:comparison}.

	Let us briefly describe the structure of the paper.
	In \Cref{sec:classes}, we gather all relevant notation
	and conditions concerning weight sequences, functions, and matrices
	and we introduce the corresponding ultradifferentiable function and sequence spaces.
	The solution by reduction (1) is obtained in \Cref{sec:reduction}.
	In \Cref{sec:duals}, we identify the duals $(\La^{(\fM)})'$ and $\cE^{(\fN)}(\R)'$ with
	certain weighted spaces of entire functions.
	This allows us to carry out the solution by dualization (2) in \Cref{sec:dualization}.
	In the final \Cref{sec:comparison}, we show that our theorems specialize to the known results for
	Denjoy--Carleman and Braun--Meise--Taylor classes; see \Cref{sequencesrleations}, \Cref{supplement},
	and \Cref{weighfunction2}.
	In the short appendix, we prove a technical statement needed in \Cref{sec:duals},
	namely that the entire functions are dense in an auxiliary function space.
	Since the inclusion of the entire functions is continuous, also the polynomials are dense.

	\section{Ultradifferentiable classes and weights} \label{sec:classes}

	Ultradifferentiable classes are weighted classes of smooth functions.

	\subsection{Weight sequences} \label{sec:sequence}

	We call a sequence of positive real numbers $M=(M_k)$ a \emph{weight sequence}, if $M_0 = 1$ and $M_k= \mu_1 \cdots \mu_k$, $k\ge 1$, for
	an increasing sequence $0<\mu_1 \le \mu_2 \le \cdots$ tending to $\infty$.
	We call a weight sequence \emph{normalized} if $\mu_1 \ge 1$ and put $\mu_0:=1$. Let us also set $m_k:=\frac{M_k}{k!}$.

	That $\mu_k$ is increasing means that $M_k$ is log-convex.
	Here are some easy consequences of the definition: $M_j M_k \le M_{j+k}$,
	$(M_k)^{1/k} \le \mu_k$, and $(M_k)^{1/k} \to \infty$ if and only if $\mu_k \to \infty$
	(cf.\ \cite[Lemma 2.3]{Rainer:2021aa}).

	For a weight sequence $M$, we define
	the \emph{Denjoy--Carleman  class of Beurling type}
	\[
	\cE^{(M)}(\R):= \Big\{ f \in C^\infty(\R) : \A K \subset\subset \R~\A r>0: \|f\|^M_{K,r}:= \sup_{x\in K, k\in \N}\frac{|f^{(k)}(x)|}{r^k M_k}<\infty \Big\}.
	\]
	It is endowed with the natural projective topology and thus has the structure of a Fr\'echet space.
	If the universal quantifier in front of $r$ is replaced by an existential quantifier one gets the Denjoy--Carleman class $\cE^{\{M\}}(\R)$ of \emph{Roumieu type}.

	It is immediate that the restriction of the Borel map $j^\infty_0$ to $\cE^{(M)}(\R)$ takes values in
	the corresponding sequence space
	\[
	\La^{(M)}:= \Big\{ \la=(\la_k)_k\in \C^{\N} : \A r>0: \|\la\|^M_r := \sup_{k\in \N}\frac{|\la_k|}{r^k M_k}<\infty \Big\},
	\]
	which again is endowed with its natural Fr\'echet topology.
	By the Denjoy-Carleman theorem, $j^\infty_0|_{\cE^{(M)}(\R)}$ is injective if and only if
	\[
	\sum_{k \ge 1} \frac{1}{\mu_k} = \infty;
	\]
	see e.g.\ \cite[Theorem 4.2]{Komatsu73}, \cite[Theorem 1.3.8]{Hoermander83I}, or \cite[Theorem 3.6]{Rainer:2021aa}.
	In that case, the class (and the weight sequence) is called \emph{quasianalytic}, and \emph{non-quasianalytic} otherwise.

	We say that $M$ has \emph{moderate growth}, if
	\[
	\E C >0~ \A j,k \in \N:\quad M_{j+k}\le C^{j+k}M_jM_k,
	\]
	and $M$ is \emph{derivation closedness}, if
	\[
	\E C >0~ \A j \in \N: \quad M_{j+1}\le C^{j+1}M_j.
	\]
	All these conditions are frequently used in the theory of ultradifferentiable classes;
	in \cite{Komatsu73}, non-quasianalyticity is denoted by $(M.3)'$, derivation closedness by $(M.2)'$ and moderate growth by $(M.2)$.

	Given two weight sequences $M$ and $N$, we write $M\le N$ if $M_k\le N_k$ for all $k$, and $M\preccurlyeq N$ if $\sup_{k>0}\big(\frac{M_k}{N_k}\big)^{1/k}<\infty$.
	We say that $M$ and $N$ are \emph{equivalent}
	if $M\preccurlyeq N$ and $N\preccurlyeq M$. Note that both moderate growth and derivation closedness are preserved under equivalence.
	Two weight sequences are equivalent if and only if they generate the same class. In fact,
	\[
	\cE^{(M)}(\R) \subseteq \cE^{(N)}(\R) \quad\Longleftrightarrow\quad    M\preccurlyeq N \quad\Longleftrightarrow\quad  \La^{(M)} \subseteq \La^{(N)}.
	\]
	We shall also need the relation $M \lhd N$, defined by $\lim_{k\to \infty}\big(\frac{M_k}{N_k}\big)^{1/k}=0$,
	which is equivalent to $\cE^{\{M\}}(\R) \subseteq \cE^{(N)}(\R)$ as well as $\La^{\{M\}} \subseteq \La^{(N)}$. All this can be found in \cite[Proposition 2.12]{compositionpaper}.

	\subsection{Weight functions}
	\label{sec:function}
	The second approach to ultradifferentiable classes is based on weight functions,
	i.e., increasing continuous functions $\om:[0,\infty) \rightarrow [0,\infty)$ satisfying some additional properties
	which will be specified shortly.
	Originally, $\om$ was used, by Beurling \cite{Beurling61} and Bj\"orck \cite{Bjorck66},
	to impose growth restrictions at infinity on the Fourier transform of the functions in question.
	In the modern approach due to Braun, Meise, and Taylor \cite{BraunMeiseTaylor90}, the derivatives of the functions
	are controlled
	by the \emph{Young conjugate} of $y\mapsto \ph_\om(y):=\om(e^y)$, that is
	\[
	\ph_\om^*(x):= \sup\{xy - \ph_\om(y):~y \ge 0\},\;\;\;x\ge 0.
	\]
	Assuming that $\log(t) = o(\om(t))$ as $t \rightarrow \infty$, which ensures that $\ph_\om^*(x)$ is finite for all $x>0$, one defines the
	\emph{Braun--Meise--Taylor class of Beurling type}
	\[
	\cE^{(\om)}(\R):= \Big\{ f \in C^\infty(\R) : \A K \subset\subset \R \A r>0 : \|f\|^\om_{K,r}:= \sup_{x\in K, k\in \N}\frac{|f^{(k)}(x)|}{e^{\ph_\om^*(rk)/r}}<\infty \Big\}
	\]
	and
	endows it with the natural Fr\'echet topology.
	Similarly, we have the Fr\'echet space
	\[
	\La^{(\om)}:= \Big\{\la=(\la_k)_k\in \C^\N : \A r>0: \|\la\|^\om_{r}:= \sup_{k\in \N}\frac{|\la_k|}{e^{\ph_\om^*(rk)/r}}<\infty \Big\}
	\]
	and the map $j^\infty_0|_{\cE^{(\om)}(\R)} : \cE^{(\om)}(\R) \to \La^{(\om)}$.
	Again there is a Roumieu version of these classes of functions and sequences, where $r$ is subjected to an existential quantifier; we refer to \cite{RoumieuMoment} for details.

	Let us now make precise the relevant regularity properties for $\om$.
	We say that an increasing continuous function $\om:[0,\infty)\rightarrow [0,\infty)$ with $\omega(0)=0$ is a \emph{pre-weight function}, if
	$\log(t)=o(\omega(t))$ as $t\rightarrow\infty$ (in particular, $\omega(t) \to \infty$),
	and $\varphi_{\omega}$ is convex.
	We call a pre-weight $\omega$ a \emph{weight function} if it also fulfills
	\begin{equation}
		\label{om1}
		\tag{$\om_1$}
		\omega(2t)=O(\omega(t)) \text{ as } t\rightarrow\infty.
	\end{equation}
	The map $j^\infty_0|_{\cE^{(\om)}(\R)}$ is injective if and only if
	\[
	\int_0^\infty \frac{\om(t)}{1+t^2}\, dt =\infty;
	\]
	see e.g.\ \cite{BraunMeiseTaylor90}, \cite[Section 4]{testfunctioncharacterization}, or \cite[Theorem 11.17]{Rainer:2021aa}.
	Then the class and the weight function $\om$ are called \emph{quasianalytic}, and \emph{non-quasianalytic} otherwise.
	It is straightforward to see that non-quasianalyticity implies $\om(t)=o(t)$ as $t\rightarrow\infty$.

	Two pre-weight functions are called \emph{equivalent},
	written $\om\sim\si$, if $\om(t)=O(\si(t))$ and $\si(t)=O(\om(t))$ as $t\to \infty$. This is precisely the case if they generate the same classes.
	Indeed,
	\[
	\cE^{(\om)}(\R) \subseteq \cE^{(\si)}(\R) \Longleftrightarrow  \La^{(\om)} \subseteq \La^{(\si)} \Longleftrightarrow  \si(t)=O(\om(t)) \text{ as } t \to \infty,
	\]
	see \cite[Corollary 5.17]{compositionpaper}. For every pre-weight function there is an equivalent pre-weight function which vanishes on $[0,1]$.

	\begin{remark}
		We will frequently consider the radially symmetric extension $\C \ni z \mapsto \om(|z|)$ of a pre-weight function $\om$.
		By abuse of notation, we will still write $\om(z)$ instead of $\om(|z|)$.
	\end{remark}

	\subsection{The associated weight function}\label{assofctsection}

	Let $M$ be a weight sequence. Then
	\[
	\om_M(t):= \sup_{k \in \N} \log\Big(\frac{t^k}{M_k}\Big),
	\]
	is a pre-weight function;
	cf.\ \cite[Chapitre I]{mandelbrojtbook} and \cite[Section 3.1]{Komatsu73}.
	See \cite[Theorem 3.1]{subaddlike} for necessary and sufficient conditions for $\om_M$ being a weight function. For $\la >0$, we set $\mu_M(\la):=|\{p \in \N_{\ge 1}:~\mu_p\le \la \}|$. Then we have the following integral representation of $\om_M$, cf. e.g. \cite[(3.11)]{Komatsu73} and references therein,
	\begin{equation}
		\label{eq:countingrepr}
		\om_M(t)=\int_0^t \frac{\mu_M(\la)}{\la}\,d\la.
	\end{equation}

	If $M$ is normalized, then $\om_M|_{[0,1]}=0$.
	And $\om_M$ is non-quasianalytic if and only if $M$ is non-quasianalytic; see \cite[Lemma 4.1]{Komatsu73}.
	Note that a weight sequence $M$ can be recovered from $\om_M$ by
	\begin{equation}\label{assoweight}
		M_k=\sup_{t>0}\frac{t^k}{\exp(\om_M(t))},\quad k\in\N.
	\end{equation}
	In general, $\cE^{(M)}(\R)$ and $\cE^{(\om_M)}(\R)$ may differ, unless $M$ has moderate growth; see \cite{BonetMeiseMelikhov07} and \cite[Section 5]{compositionpaper}.

	\subsection{Weight matrices}
	\label{sec:matrices}

	In \cite{compositionpaper} and \cite{dissertation}, Denjoy--Carleman  and Braun--Meise--Taylor classes
	were understood as special cases of ultradifferentiable classes defined by weight matrices.
	A \emph{weight matrix} is a one-parameter family of weight sequences $\fM=(M^{(x)})_{x>0}$
	such that $M^{(x)}\le M^{(y)}$ if $x\le y$ and, for all $x>0$,
	\begin{equation} \label{eq:realanalytic}
		(m^{(x)}_j)^{1/j} \to \infty \text{ as }j \to \infty.
	\end{equation}
	We call $\fM$ \emph{normalized} if all $M^{(x)}\in \fM$ are normalized.

	We define the classes of \emph{Beurling type}
	\[
	\cE^{(\fM)}(\R):= \Big\{ f \in C^\infty(\R) : \A K \subset\subset \R~\A r,x>0:~\|f\|_{K,r}^{M^{(x)}} <\infty \Big\},
	\]
	and
	\[
	\La^{(\fM)}:= \Big\{ \la=(\la_k)_k\in \C^{\N} : \A r,x>0:~\|\la\|_{r}^{M^{(x)}} <\infty \Big\},
	\]
	and endow both spaces with their natural Fr\'echet topology.
	Note that, by our assumption \eqref{eq:realanalytic}, each class
	$\cE^{(\fM)}(\R)$ contains all real analytic functions on $\R$ (cf.\ \cite[Section 4.1]{compositionpaper}).

	If \emph{all} $M^{(x)}\in \fM$ are non-quasianalytic, we call $\fM$ \emph{non-quasianalytic}.
	Non-quasianalyticity of $\fM$ is equivalent to the existence of bump functions in $\cE^{(\fM)}(\R)$; see \cite[Proposition  4.7]{testfunctioncharacterization}
	or \cite[Theorem 11.16]{Rainer:2021aa}.

	Any weight sequence $M$ induces a weight matrix $\fM=(M^{(x)})_{x>0}$ with $M^{(x)}=M$ for all $x>0$.
	Then, obviously, $\cE^{(M)}(\R) = \cE^{(\fM)}(\R)$ and $\La^{(M)}(\R) = \La^{(\fM)}(\R)$.

	\subsection{Weight matrices associated with pre-weight functions}

	To a pre-weight function $\om$ (vanishing on $[0,1]$) such that $\om(t) = o(t)$ as $t \to \infty$,
	we assign the normalized weight matrix $\Om=(W^{(x)})_{x>0}$ defined by
	\begin{equation}\label{assosequences}
		W^{(x)}_k:=\exp\Big(\frac{1}{x} \ph_\om^*(xk)\Big).
	\end{equation}
	If $\om$ is actually a weight function, then
	\begin{equation}\label{omOMlocallyequal}
		\cE^{(\om)}(\R) \cong \cE^{(\Om)}(\R) \quad \text{ and } \quad \La^{(\om)} \cong \La^{(\Om)}
	\end{equation}
	as locally convex spaces; see \cite{compositionpaper} and \cite{dissertation}.
	Let us remark that here $\om(t) = o(t)$ as $t \to \infty$ is assumed so that $\Om$ satisfies our standard assumption \eqref{eq:realanalytic}.

	Let us collect some useful properties of $\Om$.

	\begin{lemma}\label{Ompropcollect}
		The weight matrix $\Om=(W^{(x)})_{x>0}$ satisfies:
		\begin{enumerate}[label=\thetag{\arabic*}]
			\item $\vartheta^{(x)}\le\vartheta^{(y)}$ if $x\le y$, where $\vt^{(x)}_k:= \frac{W^{(x)}_k}{W^{(x)}_{k-1}}$.
			\item $W^{(x)}_{j+k}\le W^{(2x)}_jW^{(2x)}_k$ for all $x>0$ and $j,k \in \N$. \label{newmoderategrowth}
			\item $\omega\sim\omega_{W^{(x)}}$ for each $x>0$. More precisely,
			\begin{equation}\label{goodequivalenceclassic}
				\A x>0 \E D_x>0  : x \om_{W^{(x)}} \le \om \le 2x\om_{W^{(x)}}+D_x.
			\end{equation}
			\item $(w^{(x)}_k)^{1/k} \to \infty$ for all $x>0$ if and only if $\om(t)=o(t)$ as $t \to \infty$.
			\item $\om$ is non-quasianalytic if and only if each $W^{(x)}$ is non-quasianalytic, i.e., if and only if $\Om$ is non-quasianalytic.
			\item If $\om$ is a weight function,
			then
			\begin{equation}\label{newexpabsorb}
				\A h\ge 1 \E A\ge 1 \A x>0 \E D\ge 1 \A j\in\N : h^jW^{(x)}_j\le D W^{(Ax)}_j,
			\end{equation}
			which is crucial to have \eqref{omOMlocallyequal}.
		\end{enumerate}
	\end{lemma}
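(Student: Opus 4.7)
The plan is to establish each item by exploiting the Legendre-type duality between $\ph_\om$ and $\ph_\om^*$ encoded in $W^{(x)}_k=\exp(\ph_\om^*(xk)/x)$; up to discretization errors, $k\mapsto \log W^{(x)}_k$ and $t\mapsto \om(e^t)/x$ are Legendre conjugates. The real work is in (3) and (6); the other items are essentially corollaries.

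For (1), write $\log\vt^{(x)}_k = \tfrac{1}{x}\bigl(\ph_\om^*(xk)-\ph_\om^*(x(k-1))\bigr) = \int_{k-1}^{k} g(xs)\,ds$, where $g$ denotes the right derivative of $\ph_\om^*$, which exists and is nondecreasing by convexity. Monotonicity in $x$, and the fact that the same quantity is nondecreasing in $k$ (so that $W^{(x)}$ is actually a weight sequence), are then both immediate. For (2), midpoint convexity gives $\ph_\om^*(x(j+k)) \le \tfrac12\bigl(\ph_\om^*(2xj)+\ph_\om^*(2xk)\bigr)$; dividing by $x$ and exponentiating is precisely the claim.

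For (3), start from $\om_{W^{(x)}}(t)=\sup_{k\in\N}\bigl(k\log t-\tfrac{1}{x}\ph_\om^*(xk)\bigr)$. Replacing the discrete supremum by the continuous one over $u=xk\ge 0$ and invoking biconjugacy $\ph_\om^{**}=\ph_\om$ yields the upper bound $x\om_{W^{(x)}}(t)\le \om(t)$. For the reverse inequality (assume $t\ge 1$), given $u\ge 0$ pick $k\in\N$ with $xk\le u<x(k+1)$; monotonicity of $\ph_\om^*$ gives $\ph_\om^*(u)\ge \ph_\om^*(xk)$, so $u\log t-\ph_\om^*(u)\le xk\log t-\ph_\om^*(xk)+x\log t$. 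Taking the supremum yields $\om(t)\le x\om_{W^{(x)}}(t)+x\log t$; since $\log t=o(\om(t))$, for large $t$ one absorbs the $x\log t$ term into $\om(t)/2$ at the cost of an additive constant $D_x$ valid for all $t\ge 0$. This discretization step is the central technical point of the lemma and what I expect to be the main obstacle.

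Items (4) and (5) then follow from (3) together with standard properties of the associated weight function. For (4), $(w^{(x)}_k)^{1/k}\to\infty$ is equivalent (via Stirling and \eqref{assoweight}) to $\om_{W^{(x)}}(t)=o(t)$, and by (3) this holds for every $x$ exactly when $\om(t)=o(t)$. For (5), $\om\sim\om_{W^{(x)}}$ makes $\int_1^\infty \om(t)/t^2\,dt$ and $\int_1^\infty \om_{W^{(x)}}(t)/t^2\,dt$ simultaneously convergent or divergent, and Komatsu \cite[Lemma 4.1]{Komatsu73} identifies the latter integral with non-quasianalyticity of $W^{(x)}$. Finally, for (6), use (3) and \eqref{assoweight} to reduce $h^j W^{(x)}_j\le D\,W^{(Ax)}_j$ to a statement of the form $\om(ht)\le (A/2)\om(t)+\text{const}$ for some $A=A(h)$; this is obtained by iterating $\om(2t)\le L\om(t)+L$ (which is \eqref{om1}) enough times so that $2^n\ge h$. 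This is the only place where the weight-function axiom \eqref{om1} is genuinely used, and the argument is a standard dualization once (3) is in hand.
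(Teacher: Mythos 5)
Your proposal is correct. Note that the paper itself does not prove this lemma: its ``proof'' consists entirely of citations to \cite[Section 5]{compositionpaper}, \cite[Section 2.5]{whitneyextensionweightmatrix}, \cite{dissertation}, and \cite{sectorialextensions}, so you have supplied the argument the authors delegate to the literature, and what you wrote is essentially the standard Legendre-duality proof underlying those references. A few points are worth making explicit. In (1)--(3) you implicitly use the normalization $\om|_{[0,1]}=0$ (which the paper does assume when constructing $\Om$): it gives $\ph_\om^*(0)=0$, hence $W^{(x)}_0=1$, and makes the one-sided biconjugacy $\ph_\om^{**}=\ph_\om$ on $[0,\infty)$ clean. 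Your discretization in (3) is exactly right and produces the asymmetric constants in \eqref{goodequivalenceclassic}: the bound $x\om_{W^{(x)}}\le\om$ is just the restriction of the continuous supremum to the lattice $x\N$, and the error term $x\log t$ in the reverse direction is absorbed into $\tfrac12\om(t)$ for $t\ge T_x$ via $\log t=o(\om(t))$, with $D_x:=\om(T_x)$ covering small $t$. For (4), the equivalence $(m_k)^{1/k}\to\infty\Leftrightarrow\om_M(t)=o(t)$ does follow from \eqref{assoweight} and $k^k e^{-k}\le k!\le k^k$ as you indicate (one direction by evaluating the supremum at $t=k/\ve$, the other by maximizing $k\log(te/(kR))$ over $k$). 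In (6), your reduction via \eqref{assoweight} to $\om_{W^{(Ax)}}(s)\le\om_{W^{(x)}}(s/h)+\log D$ and then via \eqref{goodequivalenceclassic} to $\om(hs)\le\tfrac{A}{2}\om(s)+C_h$ respects the quantifier order of \eqref{newexpabsorb}: $A=2L^n$ (with $2^n\ge h$ and $L$ the constant from \eqref{om1}) depends only on $h$, while $D$ is allowed to --- and does --- depend on $x$ through $D_x$. So \eqref{om1} is indeed the only axiom beyond convexity that gets used, and only where you say it is.
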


	\begin{proof}
		Cf.\ \cite[Section 5]{compositionpaper} and \cite[Section 2.5]{whitneyextensionweightmatrix}.
		For (3) see also \cite[Theorem 4.0.3, Lemma 5.1.3]{dissertation} and \cite[Lemma 2.5]{sectorialextensions}.
	\end{proof}

	\subsection{Order relations of weight matrices}
	For two weight matrices $\fM$ and $\fN$, we write
	$\fM (\preccurlyeq)\fN$ if
	for all $y$ there exists $x$ such that $M^{(x)} \preccurlyeq N^{(y)}$. By \cite[Proposition 4.6(1)]{compositionpaper},
	\[
	\cE^{(\fM)}(\R) \subseteq \cE^{(\fN)}(\R) \quad \Longleftrightarrow \quad \La^{(\fM)} \subseteq \La^{(\fN)} \quad \Longleftrightarrow \quad \fM (\preccurlyeq) \fN.
	\]
	If $\fM (\preccurlyeq) \fN$ and $\fN (\preccurlyeq) \fM$ hold simultaneously, then we say that $\fM$ and $\fN$ are \emph{equivalent}.
	This is the case if and only if $\cE^{(\fM)}(\R) = \cE^{(\fN)}(\R)$ as well as $\La^{(\fM)} = \La^{(\fN)}$ (as sets and, in turn, also as locally convex vector spaces).

	\begin{remark}
		Typically, for each notion of Beurling type there is a related version of Roumieu type.
		Since in this paper we are principally concerned with the Beurling case, we will only mention the former without emphasizing every time that it is the Beurling version.
	\end{remark}

	If $M$ is a weight sequence and $\fN$ a weight matrix, then $M \lhd N^{(x)}$ for all $x>0$ if and only if $\cE^{\{M\}}(\R) \subseteq \cE^{(\fN)}(\R)$; see \cite[Proposition 4.6(2)]{compositionpaper}.

	\subsection{Moderate growth and derivation closedness}\label{som1sect}

	For weight sequences $M,N$, consider
	\[
	mg(M,N) := \sup_{j+k \ge 1}\Big(\frac{M_{j+k}}{N_j N_k}\Big)^{\frac{1}{j+k}} \in (0,\infty]
	\]
	and
	\[
	dc(M,N) := \sup_{j\in \N}\left(\frac{M_{j+1}}{N_j}\right)^{\frac{1}{j+1}}  \in (0,\infty].
	\]
	A weight matrix $\fM = (M^{(x)})_{x>0}$ is said to have \emph{moderate growth}  if
	\begin{equation} \label{eq:mg}
		\tag{$\fM_{(mg)}$}
		\A y >0 \E x >0 : ~mg(M^{(x)}, M^{(y)})<\infty,
	\end{equation}
	and to be \emph{derivation closed} if
	\begin{equation} \label{eq:dc}
		\tag{$\fM_{(dc)}$}
		\A y >0 \E x >0:~dc(M^{(x)}, M^{(y)})<\infty.
	\end{equation}
	Note that moderate growth, derivation closedness, and non-quasianalyticity are
	preserved under equivalence.

	Derivation closedness allows for absorption of log-terms in associated weight functions:

	\begin{lemma}
		\label{lem:dcconsequence}
		Let $M^{(k)}$, for $1\le k \le l+1$, be weight sequences such that $dc(M^{(k)},M^{(k+1)})<\infty$ for all $1\le k\le l$. Then there exists $C >0$ such that
		\[
		\om_{M^{(l+1)}}(t)+\log(1+t^l)\le \om_{M^{(1)}}(Ct)+C, \quad t\ge 0.
		\]
	\end{lemma}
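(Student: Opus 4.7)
The plan is to proceed by induction on $l$. The base case $l=1$ is a direct consequence of a single derivation-closedness inequality combined with the defining formula for the associated weight function, and the inductive step follows by peeling off the top weight sequence with one more application of the base case.

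For the base case, the hypothesis $dc(M^{(1)}, M^{(2)}) < \infty$ supplies a constant $c \ge 1$ with $M^{(1)}_{j+1} \le c^{j+1} M^{(2)}_j$ for every $j \ge 0$. Substituting this into the definition of $\om_{M^{(1)}}$ and shifting the index $k \mapsto k-1$ yields, for $t>0$,
\[
\om_{M^{(1)}}(ct) \ge \sup_{k\ge 1} \log \frac{(ct)^k}{c^k M^{(2)}_{k-1}} = \log t + \om_{M^{(2)}}(t).
\]
For $t \ge 1$ this rearranges, using $\log(1+t) - \log t \le \log 2$, to $\om_{M^{(2)}}(t) + \log(1+t) \le \om_{M^{(1)}}(ct) + \log 2$. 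For $t \in [0,1]$, the monotonicity of $\om_{M^{(2)}}$ gives $\om_{M^{(2)}}(t) + \log(1+t) \le \om_{M^{(2)}}(1) + \log 2$, and this is absorbed by the nonnegativity of $\om_{M^{(1)}}(ct)$ (which holds since $M^{(1)}_0=1$) at the cost of enlarging the additive constant. Choosing $C$ to dominate $c$, $1$, and $\om_{M^{(2)}}(1) + \log 2$ completes the base case.

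For the inductive step, apply the base case to $(M^{(l)}, M^{(l+1)})$ to obtain some $c_l \ge 1$ with $\om_{M^{(l+1)}}(t) + \log(1+t) \le \om_{M^{(l)}}(c_l t) + c_l$, and apply the induction hypothesis to $M^{(1)}, \dots, M^{(l)}$ at the point $c_l t$ to obtain $C' \ge 1$ with $\om_{M^{(l)}}(c_l t) + \log(1 + (c_l t)^{l-1}) \le \om_{M^{(1)}}(C' t) + C'$. Chaining these inequalities gives
\[
\om_{M^{(l+1)}}(t) + \log(1+t) + \log\bigl(1 + c_l^{l-1} t^{l-1}\bigr) \le \om_{M^{(1)}}(C' t) + C' + c_l.
\]
Since $c_l \ge 1$, the elementary estimate $(1+t)(1 + c_l^{l-1} t^{l-1}) \ge 1 + c_l^{l-1} t^l \ge 1 + t^l$ bounds the left-hand side from below by $\om_{M^{(l+1)}}(t) + \log(1+t^l)$, which yields the claim with any $C \ge \max(C', C' + c_l)$.

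The argument presents no serious obstacle; the only delicate points are the bookkeeping of constants along the induction and the small nuisance that $\log t$ is ill-behaved near zero, which forces the case split $t \ge 1$ versus $t \in [0,1]$ in the base case.
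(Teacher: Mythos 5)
Your proof is correct and follows essentially the same route as the paper, whose entire argument is to iterate a one-step estimate (cited as Lemma 2 of Boiti--Jornet--Oliaro--Schindl) saying that a single derivation-closedness relation absorbs one factor of $\log(1+t)$ into a dilation of the argument of $\om_{M^{(1)}}$. The only difference is that you prove that one-step estimate from scratch instead of citing it; your base-case computation (including the $t\le 1$ case split and the use of $M^{(1)}_0=1$ to get $\om_{M^{(1)}}\ge 0$) and the bookkeeping of constants in the induction are both sound.
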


	\begin{proof}
		An iterated application of \cite[Lemma 2]{nuclearglobal2} yields the result.
	\end{proof}

	\subsection{Absorbing exponential growth}

	Inspired by \eqref{newexpabsorb} (cf.\ \cite[Section 4.1]{compositionpaper}),
	we say that a weight matrix $\fM$ \emph{absorbs exponential growth} if
	\begin{equation} \label{eq:exgrowth}
		\tag{$\fM_{(L)}$}
		\A y,h>0 \E x,A>0\A k \in \N:\quad h^kM^{(x)}_k\le A M^{(y)}_k.
	\end{equation}
	The weight matrix $\Om$ associated  with a weight function always has this property, by \Cref{Ompropcollect}.

	The following lemma states that for any weight matrix $\fM$ we may find an equivalent weight matrix with
	the property $(\fM_{(L)})$.
	For the sake of completeness, we mention that an analogous statement holds true in the Roumieu setting as well.

	\begin{lemma}
		\label{lem:ML}
		Let $\fM$ be a (normalized) weight matrix. Then there exists an equivalent (normalized) weight matrix $\fN$ that satisfies $(\fM_{(L)})$.
		Actually, we can choose $\fN$ such that for all $k\in\N_{\ge 1}$ there exist $A_k$ and $B_k$ such that for all $j\in\N$
		\begin{equation} \label{eq:equivalent}
			A_k\Big(\frac{1}{2^k}\Big)^j M^{(\frac{1}{k})}_j \le N_j^{(\frac{1}{k})} \le B_k\Big(\frac{1}{2^k}\Big)^j M^{(\frac{1}{k})}_j.
		\end{equation}
		Consequently, for all $t\ge 0$,
		\begin{equation}
			\label{eq:MLconsequence}
			\om_{M^{(1/k)}}(2^kt)-\log(B_k) \le \om_{N^{(1/k)} }(t) \le \om_{M^{(1/k)}}(2^kt)-\log(A_k).
		\end{equation}
	\end{lemma}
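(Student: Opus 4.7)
The plan is to build $\fN$ from $\fM$ by a pointwise $\max(\cdot,1)$ truncation of rescaled sequences. Concretely, set $c(x):=2^{-1/x}$ for $x>0$ and define
\[
N^{(x)}_j := \max\bigl(1,\,c(x)^j M^{(x)}_j\bigr), \qquad j\in\N.
\]
Since $c(1/k)=2^{-k}=:c_k$, the sandwich \eqref{eq:equivalent} is almost tautological once I identify the threshold $J_k := \min\{j\in\N:\,c_k^j M^{(1/k)}_j\ge 1\}$, which is finite because $(M^{(1/k)}_j)^{1/j}\to\infty$. For $j\ge J_k$ one has $N^{(1/k)}_j = c_k^j M^{(1/k)}_j$ exactly, while for $j<J_k$ one has $N^{(1/k)}_j = 1$; this lets me take $A_k:=1$ and $B_k := \max\bigl(1,\max_{0\le j<J_k}c_k^{-j}/M^{(1/k)}_j\bigr)<\infty$. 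The consequence \eqref{eq:MLconsequence} then drops out of \eqref{eq:equivalent} and the definition $\om_M(t)=\sup_j\log(t^j/M_j)$ by direct substitution.

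Next I would check that $\fN=(N^{(x)})_{x>0}$ is a normalized weight matrix. The main point is log-convexity of each $N^{(x)}$: the sequence $j\mapsto c(x)^jM^{(x)}_j$ is log-convex, so $j\mapsto\log N^{(x)}_j=\max(0,\log(c(x)^jM^{(x)}_j))$ is a pointwise maximum of two convex functions and therefore convex. Consequently the quotients $\nu^{(x)}_j$ are increasing, with $\nu^{(x)}_1=\max(1,c(x)\mu^{(x)}_1)\ge 1$ and $\nu^{(x)}_j\to\infty$. Monotonicity $N^{(x)}\le N^{(y)}$ for $x\le y$ follows from $c(x)\le c(y)$ together with $M^{(x)}\le M^{(y)}$, and condition \eqref{eq:realanalytic} for $\fN$ follows from the eventual equality $N^{(x)}_j=c(x)^jM^{(x)}_j$.

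After that I verify equivalence of $\fN$ and $\fM$ and the property $(\fM_{(L)})$ for $\fN$. Equivalence: \eqref{eq:equivalent} yields $N^{(1/k)}\preccurlyeq M^{(1/k)}$ directly, while $M^{(1/k)}\preccurlyeq N^{(1/k)}$ comes from $M^{(1/k)}_j = (2^k)^j c_k^j M^{(1/k)}_j \le (2^k)^j N^{(1/k)}_j$ for $j\ge J_k$, together with the bounded contribution from the finitely many smaller indices. For $(\fM_{(L)})$: given $y>0$ and $h>0$, I choose $x$ so small that $1/x\ge 1/y+\log_2(\max(h,1))$, forcing $h\,c(x)\le c(y)$. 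In the regime where $N^{(x)}_j=c(x)^jM^{(x)}_j$ one gets $h^jN^{(x)}_j\le(h\,c(x))^jM^{(y)}_j\le c(y)^jM^{(y)}_j\le N^{(y)}_j$ using $M^{(x)}\le M^{(y)}$; in the complementary regime only finitely many indices $j<J(x)$ occur, so $h^jN^{(x)}_j=h^j\le h^{J(x)}$ is bounded, and a single constant $A:=\max(1,h^{J(x)})$ absorbs both contributions.

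The delicate point is reconciling the three structural requirements on $\fN$ — log-convexity in $j$, monotonicity in the matrix parameter $x$, and normalization — with the prescribed exponential rescaling by $2^{-k}$. A plain product $c(x)^jM^{(x)}_j$ loses normalization as soon as its first quotient falls below $1$, and repairing the small-$j$ terms by hand would typically break either log-convexity or the monotonicity in $x$. The pointwise $\max(1,\cdot)$ trick is what repairs all three simultaneously, because the maximum of convex functions is still convex and the operation commutes with the monotonicity hypotheses on $c(x)$ and $M^{(x)}$.
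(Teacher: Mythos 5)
Your construction is correct in substance but takes a genuinely different route from the paper. The paper builds the sequences $N^{(1/k)}$ one at a time by surgery on the quotient sequences: it sets finitely many quotients equal to $1$ and then switches to $\frac{1}{2^k}\mu^{(1/k)}_j$, choosing the switching index inductively so as to enforce both \eqref{eq:equivalent} and the monotonicity $N^{(\frac{1}{k+1})}\le N^{(\frac{1}{k})}$; the property $(\fM_{(L)})$ is then deduced from \eqref{eq:equivalent} alone. Your closed formula $N^{(x)}_j=\max(1,2^{-j/x}M^{(x)}_j)$ achieves the same in one stroke: log-convexity is preserved because a pointwise maximum of convex sequences is convex, monotonicity in the parameter $x$ is automatic from $c(x)\le c(y)$ and $M^{(x)}\le M^{(y)}$, and normalization comes for free from the truncation at $1$. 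This is arguably cleaner and avoids the induction entirely; what the paper's version buys is essentially only that its $\fN$ is constant on the intervals $(\frac{1}{k+1},\frac1k]$, which is convenient elsewhere but not needed for the statement.

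One small slip: with $J_k:=\min\{j\in\N:\ c_k^jM^{(1/k)}_j\ge 1\}$ you get $J_k=0$ always, since $c_k^0M^{(1/k)}_0=1$; so the dichotomy ``$N^{(1/k)}_j=c_k^jM^{(1/k)}_j$ for $j\ge J_k$ and $N^{(1/k)}_j=1$ for $j<J_k$'' is not what your definition literally gives. Take instead $J_k:=\min\{j\ge 1:\ c_k^jM^{(1/k)}_j\ge 1\}$ and note that, since $g(j):=\log(c_k^jM^{(1/k)}_j)$ is convex with $g(0)=0$, having $g(j)\ge 0$ and $g(j')<0$ for some $j'>j\ge1$ would contradict convexity on $\{0,j,j'\}$; hence $\{j\ge1:\ c_k^jM^{(1/k)}_j\ge1\}$ is an up-set and the dichotomy holds. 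Alternatively you can dispense with the threshold altogether: $\max(1,f_j)\le \max(1,f_j^{-1})\,f_j$ gives the upper bound in \eqref{eq:equivalent} with $B_k=\sup_{j}\max\bigl(1,(c_k^jM^{(1/k)}_j)^{-1}\bigr)<\infty$ directly, and the same remark streamlines the finitely-many-exceptional-indices step in your verification of $(\fM_{(L)})$. With this repaired, the remaining steps (equivalence of $\fM$ and $\fN$, \eqref{eq:MLconsequence}, and $(\fM_{(L)})$ via $h\,c(x)\le c(y)$) all check out.
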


	\begin{proof}
		We will construct normalized weight sequences $N^{(\frac{1}{k})}$, indexed by $k \in \N_{\ge 1}$, satisfying \eqref{eq:equivalent} and
		$N^{(\frac{1}{k+1})} \le N^{(\frac{1}{k})}$  for all $k$.
		If we set $N^{(x)}:=N^{(\frac{1}{k})}$ for $\frac{1}{k+1}<x\le\frac{1}{k}$, then \eqref{eq:equivalent} implies that $\fM$ and $\fN$ are equivalent.
		Moreover, \eqref{eq:MLconsequence} follows from \eqref{eq:equivalent} and the definition of the associated weight function.
		To see that $\fN$ fulfills $(\fM_{(L)})$, fix $y$ and $h$ and choose $k,n \in \N$ such that $h \le 2^k$ and $\frac{1}{n}\le y$.
		By \eqref{eq:equivalent},
		\[
		h^j N^{(\frac{1}{k+n})}_j
		\le B_{k+n} \Big(\frac{1}{2^n}\Big)^j M^{(\frac{1}{k+n})}_j
		\le  B_{k+n} \Big(\frac{1}{2^n}\Big)^j M^{(\frac{1}{n})}_j
		\le \frac{B_{k+n}}{A_n} N^{(\frac{1}{n})}_j
		\le \frac{B_{k+n}}{A_n} N^{(y)}_j,
		\]
		for all $j$.

		Let us now construct the sequences $N^{(\frac{1}{k})}$.
		In the following, we work with $\mu^{(x)}_j = \frac{M^{(x)}_j}{M^{(x)}_{j-1}}$ and  $\nu^{(x)}_j = \frac{N^{(x)}_j}{N^{(x)}_{j-1}}$.
		Choose $j_0 \in \N$ minimal such that $\mu^{(1)}_j \ge 2$ for all $j \ge j_0$. Set
		\[
		\nu^{(1)}_j:= 1 \text{ for } j \le j_0, \quad \nu^{(1)}_j:= \frac{1}{2} \mu^{(1)}_j \text{ for } j > j_0,
		\]
		and $N^{(1)}_j:=\nu^{(1)}_0 \nu^{(1)}_1 \cdots \nu^{(1)}_j$, for $j\in \N$. Thus $N^{(1)}$ is clearly log-convex and satisfies \eqref{eq:equivalent} for $k = 1$.

		Now assume we have found sequences $N^{(\frac{1}{l})}$ such that \eqref{eq:equivalent} and $N^{(\frac{1}{l})} \le N^{(\frac{1}{l-1})}$ is satisfied for $l \le k$.
		Then we construct $N^{(\frac{1}{k+1})}$ as follows. Choose $j_0$ such that $\mu_j^{(\frac{1}{k+1})}\ge 2^{k+1}$ for all $j \ge j_0$.
		By \eqref{eq:equivalent} and the pointwise order of $\fM$, for $j \ge j_0$,
		\begin{align*}
			\Big(\frac{1}{2^{k+1}}\Big)^{j-j_0} \mu_{j_0+1}^{(\frac{1}{k+1})}\cdots \mu_{j}^{(\frac{1}{k+1})}
			&= \Big(\frac{1}{2^{k+1}}\Big)^{j-j_0} \frac{M^{(\frac{1}{k+1})}_{j}}{M^{(\frac{1}{k+1})}_{j_0}}
			\\
			&\le \frac{2^{(k+1)j_0}}{M^{(\frac{1}{k+1})}_{j_0}} \Big(\frac{1}{2^{k}}\Big)^{j} M^{(\frac{1}{k})}_{j}
			\\
			&\le \frac{2^{(k+1)j_0}}{A_k M^{(\frac{1}{k+1})}_{j_0}}  N^{(\frac{1}{k})}_{j}
			\\
			&= \frac{2^{(k+1)j_0} N_{j_0}^{(\frac{1}{k})}}{A_k M^{(\frac{1}{k+1})}_{j_0}}  \nu^{(\frac{1}{k})}_{j_0+1}\cdots \nu_j^{(\frac{1}{k})}
			=: B_k \nu^{(\frac{1}{k})}_{j_0+1}\cdots \nu_j^{(\frac{1}{k})}.
		\end{align*}
		Since $\mu_j^{(\frac{1}{k+1})} \to \infty$ as $j \to \infty$, there exists $j_1 >j_0$ such that
		\[
		\Big(\frac{1}{2^{k+1}}\Big)^{j_1-j_0} \mu_{j_0+1}^{(\frac{1}{k+1})}\cdots \mu_{j_1}^{(\frac{1}{k+1})} \ge B_k.
		\]
		Now set
		\[
		\nu^{(\frac{1}{k+1})}_j = 1 \text{ for }j \le j_1, \quad \nu^{(\frac{1}{k+1})}_j
		= \frac{1}{2^{k+1}}\mu_j^{(\frac{1}{k+1})} \text{ for } j > j_1.
		\]
		Then \eqref{eq:equivalent} is immediate.
		Combining the last two estimates, we also get $N^{(\frac{1}{k+1})} \le N^{(\frac{1}{k})}$.
		This ends the proof.
	\end{proof}

	\begin{corollary}
		For any weight matrix $\fM$ there is an equivalent weight matrix $\fN$ such that
		$\{\|\cdot\|^{N^{(1/k)}}_{[-k,k],1} :   k \in \N_{\ge 1}\}$ (resp.\ $\{\|\cdot\|^{N^{(1/k)}}_{1} :  k \in \N_{\ge 1}\}$)
		is a fundamental system of seminorms for $\cE^{(\fM)}(\R)$ (resp.\ $\La^{(\fM)}$).
	\end{corollary}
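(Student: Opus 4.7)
The plan is first to invoke the preceding \Cref{lem:ML} and replace $\fM$ by an equivalent weight matrix $\fN$ satisfying $(\fM_{(L)})$. Since equivalent weight matrices generate identical locally convex spaces, the full defining family $\{\|\cdot\|^{N^{(x)}}_{K,r} : K\subset\subset \R,\, r,x>0\}$ (respectively $\{\|\cdot\|^{N^{(x)}}_{r} : r,x>0\}$) is a fundamental system of seminorms on $\cE^{(\fM)}(\R)$ (respectively $\La^{(\fM)}$). It therefore suffices to prove that every such seminorm is dominated, up to a multiplicative constant, by some member of the proposed countable family.

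To this end, I would fix $y,r>0$ and $K\subset\subset\R$ and apply $(\fM_{(L)})$ for $\fN$ with the pair $(y,h)$, where $h:=1/r$. This produces $x>0$ and $A>0$ with $h^p N^{(x)}_p\le A\,N^{(y)}_p$ for every $p\in\N$, i.e., $N^{(x)}_p\le A\,r^p N^{(y)}_p$. Next I would choose $k\in\N_{\ge 1}$ so large that simultaneously $1/k\le x$ and $K\subset[-k,k]$. By the pointwise monotonicity $N^{(1/k)}\le N^{(x)}$ of the weight matrix in its parameter, this gives $\tfrac{1}{r^p N^{(y)}_p}\le \tfrac{A}{N^{(1/k)}_p}$; multiplying by $|f^{(p)}(t)|$ and taking the supremum over $t\in K\subset[-k,k]$ and $p\in\N$ yields
\[
\|f\|^{N^{(y)}}_{K,r}\le A\,\|f\|^{N^{(1/k)}}_{[-k,k],1}
\]
for all $f\in\cE^{(\fM)}(\R)$. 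The same computation, with the compact set simply dropped, handles $\La^{(\fM)}$.

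There is no genuine obstacle here: the whole point of passing to $\fN$ is that $(\fM_{(L)})$ lets us absorb the geometric factor $r^p$ into a shift of the matrix parameter, after which the standard choice of $k$ (large enough that $1/k$ is below the shifted index and that $[-k,k]$ contains $K$) finishes the job. One only has to keep track of the direction of the various inequalities. As a bonus, the resulting countable family is automatically nested, because $[-k,k]\subset[-(k+1),k+1]$ and $N^{(1/(k+1))}\le N^{(1/k)}$, so it is indeed a countable, increasing fundamental system of seminorms, confirming that $\cE^{(\fM)}(\R)$ and $\La^{(\fM)}$ are Fr\'echet spaces with the stated canonical seminorm systems.
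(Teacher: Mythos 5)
Your proof is correct and is exactly the argument the paper intends (the corollary is stated without proof as an immediate consequence of \Cref{lem:ML}): pass to an equivalent matrix with $(\fM_{(L)})$, absorb the factor $r^p$ into a shift of the parameter, and then pick $k$ with $1/k\le x$ and $K\subseteq[-k,k]$. All inequalities are in the right direction, and the observation that the resulting countable family is increasing is a correct and worthwhile addition.
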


	\subsection{Strong $(\om_1)$ condition}

	Let us write $M \prec_{s\om_1} N$ if and only if
	\begin{equation}
		\label{som1}
		\tag{$s\om_1$}
		\E C>0 \A t \ge 0 :\quad \om_M(2t) \le \om_N(t)+C,
	\end{equation}
	and say that $M$ and $N$ satisfy the \emph{strong $(\om_1)$ condition}.

	Then another immediate consequence of \Cref{lem:ML} is the following.

	\begin{corollary}
		\label{rem:omega1wlog}
		Up to equivalence, we can assume that a weight matrix $\fM$ satisfies
		\begin{equation}\label{rem:omega1wlogequ}
			\A x>0 \E y>0 :~ M^{(x)} \prec_{s\om_1} M^{(y)}.
		\end{equation}
	\end{corollary}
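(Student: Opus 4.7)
The plan is to apply \Cref{lem:ML} to $\fM$ to obtain an equivalent weight matrix $\fN$ satisfying the two-sided bound \eqref{eq:MLconsequence}, and then to verify that $\fN$ itself already fulfills \eqref{rem:omega1wlogequ}. Since equivalence is preserved and nothing more is claimed, this suffices.

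Given $x>0$, I would pick $k\in\N_{\ge 1}$ such that $N^{(x)}=N^{(1/k)}$: by the construction in the proof of \Cref{lem:ML}, $\fN$ is a step-function in $x$ on $(0,1]$ with $N^{(x)}=N^{(1/k)}$ for $1/(k+1)<x\le 1/k$ (and one may extend by $N^{(x)}:=N^{(1)}$ for $x\ge 1$, taking $k=1$). Then set $y:=1/(k+1)$. The desired inequality $\om_{N^{(x)}}(2t)\le \om_{N^{(y)}}(t)+C$ is obtained by concatenating three elementary facts. First, the upper half of \eqref{eq:MLconsequence} at index $1/k$, evaluated at $2t$, gives
\[
\om_{N^{(1/k)}}(2t)\le \om_{M^{(1/k)}}(2^{k+1}t)-\log A_k.
\]
Second, since $M^{(1/(k+1))}\le M^{(1/k)}$ by monotonicity of $\fM$ in the parameter and the map $M\mapsto\om_M$ reverses pointwise order (larger sequences produce smaller associated pre-weights), one has $\om_{M^{(1/k)}}\le \om_{M^{(1/(k+1))}}$ pointwise. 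Third, the lower half of \eqref{eq:MLconsequence} at index $1/(k+1)$ gives
\[
\om_{M^{(1/(k+1))}}(2^{k+1}t)\le \om_{N^{(1/(k+1))}}(t)+\log B_{k+1}.
\]
Chaining these three inequalities yields $\om_{N^{(x)}}(2t)\le \om_{N^{(y)}}(t)+\log(B_{k+1}/A_k)$, which is precisely \eqref{som1}.

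There is no genuine obstacle; the argument is purely mechanical once \Cref{lem:ML} is in hand. The only book-keeping point that deserves a second look is how the doubling factor on the left-hand side of \eqref{som1} is absorbed: the factor $2^{k+1}$ appearing after plugging $2t$ into the upper estimate of \eqref{eq:MLconsequence} at index $1/k$ exactly matches the factor needed to invert the lower estimate at the neighboring index $1/(k+1)$. This is also the structural reason why the conclusion necessarily takes the Beurling form $\forall x\,\exists y$ and cannot be strengthened to a statement uniform in $x$.
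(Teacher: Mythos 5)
Your argument is correct and is essentially the route the paper intends: the paper gives no proof beyond calling the corollary an immediate consequence of \Cref{lem:ML}, and your chaining of the two halves of \eqref{eq:MLconsequence} across the neighbouring indices $1/k$ and $1/(k+1)$ (using that $\om_{M^{(1/k)}}\le\om_{M^{(1/(k+1))}}$ and that $2\cdot 2^k=2^{k+1}$) is exactly how that consequence is extracted. One could equivalently invoke the property $(\fM_{(L)})$ of $\fN$ with $h=2$ and translate it into associated weight functions, but that is the same computation.
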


	\begin{remark}  \label{rem:strongom1}
		In analogy to \eqref{om1}, one is led to the following condition:
		\begin{equation}\label{mixedbeuromega1}
			\A x>0 \E y>0  :\quad \om_{M^{(x)}}(2t)=O(\om_{M^{(y)}}(t)) \text{ as } t \to \infty;
		\end{equation}
		see \cite{testfunctioncharacterization} and \cite{mixedgrowthindex}.
		But \eqref{rem:omega1wlogequ} is stronger than \eqref{mixedbeuromega1}. Cf.\ the results from \cite[Section 3]{mixedgrowthindex} and the citations therein as well as \Cref{constantsinout}.
	\end{remark}

	\section{Reduction to the Roumieu case}
	\label{sec:reduction}

	The goal of this section is to prove the following theorem.

	\begin{theorem}
		\label{thm:mainthm1}
		Let $\fM, \fN$ be weight matrices that are ordered with respect to their quotient sequences, i.e., $\mu^{(x)}\le \mu^{(y)}$ and $\nu^{(x)}\le \nu^{(y)}$ if $x \le y$.
		Then
		\begin{equation}
			\label{eq:SVchar}\tag{SV}
			\La^{(\fM)} \subseteq j^\infty_0 \cE^{(\fN)}(\R) \quad \Longleftrightarrow \quad \forall y>0 \E x >0:~  M^{(x)} \prec_{SV} N^{(y)}.
		\end{equation}
	\end{theorem}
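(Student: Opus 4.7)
The plan is to reduce both directions to the known Roumieu characterization (recalled in the introduction, from \cite{RoumieuMoment}) by bridging the weight matrices $\fM$ and $\fN$ via carefully constructed single weight sequences. The pointwise monotonicity of $\mu^{(x)}$ and $\nu^{(x)}$ in $x$ is used throughout to keep the diagonal constructions log-convex and compatible with the matrix orderings; together with the results of \Cref{sec:classes} (in particular \Cref{lem:ML} and \Cref{rem:omega1wlog}), it lets us work with equivalent weight matrices that enjoy the properties $(\fM_{(L)})$ and $(s\om_1)$.

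For the sufficiency direction, assume that for every $y>0$ there exists $x>0$ with $M^{(x)}\prec_{SV} N^{(y)}$, and let $\la\in\La^{(\fM)}$. The bounds $\sup_p |\la_p|/(r^p M^{(x)}_p)<\infty$ for all $r,x>0$ allow one to construct, by a diagonal (Carleson-type) procedure, a log-convex weight sequence $L$ such that $\la\in\La^{\{L\}}$ while $L\lhd M^{(x)}$ for every $x>0$. A parallel construction on the target side produces a weight sequence $N$ with $L\prec_{SV} N$ and $N\lhd N^{(y)}$ for every $y>0$, where at each level the SV-relation is inherited by applying the hypothesis to an appropriately chosen $x=x(y)$ and then interpolating log-convexly. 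The single-weight Roumieu extension theorem then yields $f\in\cE^{\{N\}}(\R)$ with $j^\infty_0 f=\la$, and since $N\lhd N^{(y)}$ for all $y$ we have $\cE^{\{N\}}(\R)\subseteq\cE^{(\fN)}(\R)$.

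For the necessity direction, I would argue by contradiction: if there exists $y_0>0$ with $M^{(x)}\not\prec_{SV} N^{(y_0)}$ for every $x>0$, then the Roumieu characterization (contrapositive) produces, for each $x$, a sequence $\la^{(x)}\in\La^{\{M^{(x)}\}}$ that is not the Borel jet of any function in $\cE^{\{N^{(y_0)}\}}(\R)$. One assembles a single $\la\in\La^{(\fM)}$ out of these $\la^{(x)}$ by summing appropriately scaled and truncated pieces, using the Fr\'echet projective structure of $\La^{(\fM)}$ and the pointwise order of the matrices to ensure that the Beurling-type bounds hold for every $x$, while the failure of an extension in $\cE^{(\fN)}(\R)$ is forced by trapping the obstructions $\la^{(x)}$ at a uniform scale $y_0$. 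This $\la$ cannot lie in $j^\infty_0\cE^{(\fN)}(\R)$, contradicting the hypothesis.

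The main obstacle is the two diagonal constructions of the auxiliary sequences $L$ and $N$. One must simultaneously guarantee log-convexity, the correct $\lhd$-asymptotics against every $M^{(x)}$ (resp.\ $N^{(y)}$), and---most delicately---the SV-relation $L\prec_{SV} N$ itself, which is a quantitative interaction between the growth of $L$, the ratios $N^{(y)}_k/N^{(y)}_{k-1}$, and the tail sums defining $\prec_{SV}$. The reversed quantifier order of the Beurling SV-condition relative to its Roumieu counterpart means that, unlike in \cite{RoumieuMoment}, the bridge sequences must be built ``downward'' through $\fN$ rather than ``upward'' through $\fM$, which is precisely why the hypothesis on the monotonicity of the quotients $\mu^{(x)}$ and $\nu^{(x)}$ becomes essential: it converts the existentials produced by the SV-hypothesis into a coherent one-parameter family that can be threaded by the diagonal argument.
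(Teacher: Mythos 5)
Your sufficiency direction follows the same scheme as the paper: for a fixed $\la\in\La^{(\fM)}$, build two bridge sequences (your $L$ and $N$, the paper's $R$ and $S$) with $\la\in\La^{\{L\}}$, $L\prec_{SV}N$, and $\cE^{\{N\}}(\R)\subseteq\cE^{(\fN)}(\R)$, then invoke the single-sequence Roumieu theorem. However, you stop exactly where the actual work begins. The point you yourself flag as ``most delicate''---securing $L\prec_{SV}N$ while simultaneously pushing $N$ below every $N^{(y)}$---is the entire content of the paper's Steps II--V: making $N$ smaller shrinks the quotients $\nu_k$ and hence inflates the tail sums $\sum_{k\ge j}\nu_k^{-1}$ appearing in \eqref{eq:SV}, so one cannot ``interpolate log-convexly'' naively. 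The paper controls this with two applications of the Chaumat--Chollet lemma (\Cref{Beurling-Surjectivitytheorem1}), whose third conclusion $\sum_{k\ge j}\theta_k\alpha_k\le 8\theta_j\sum_{k\ge j}\alpha_k$ is precisely what keeps the tail sums of the modified sequence comparable to those of the diagonal sequence $\ul N$, plus the compatibility conditions \eqref{eq:th'th1}--\eqref{eq:th'th2} linking the two correction sequences $\theta_j$ and $\theta_j'$. Without an argument of this kind your claim ``at each level the SV-relation is inherited'' is unsubstantiated; I would not accept this direction as proved.

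The necessity direction is a genuinely different route from the paper's, and it has a concrete gap. You propose to take, for each $x$, a Roumieu counterexample $\la^{(x)}\in\La^{\{M^{(x)}\}}\setminus j^\infty_0\cE^{\{N^{(y_0)}\}}(\R)$ and to ``assemble'' a single $\la\in\La^{(\fM)}$ from scaled and truncated pieces. But non-membership in the image of the Borel map is a global property of the jet, not a property of any finite block of coefficients: every truncation of $\la^{(x)}$ is a polynomial jet and extends trivially, and the image is a linear subspace, so neither truncation, nor scaling, nor summation of such pieces preserves the obstruction in any evident way. Worse, membership of the assembled $\la$ in the projective space $\La^{(\fM)}$ forces the contribution of $\la^{(x)}$ to decay rapidly in \emph{every} $M^{(x')}$-norm, i.e.\ forces exactly the heavy truncation that destroys the obstruction; ``trapping the obstructions at a uniform scale $y_0$'' is not an argument. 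The paper avoids this entirely by a functional-analytic argument in the spirit of Schmets--Valdivia: the hypothesis $\La^{(\fM)}\subseteq j^\infty_0\cD^{(\fN)}([-1,1])$ yields continuous linear right inverses $T_{m,k}$ of $j^\infty_0$ on the Banach completions $F_{m,k}$, whence a quantitative seminorm estimate $\|T_{m,1}(a)\|_{m,1}\le C\|a\|_s$ that, fed into the proof of \cite[Theorem 4.4]{surjectivity} for the pair $M^{(1/s)}$, $N^{(1/m)}$, produces $M^{(1/s)}\prec_{SV}N^{(1/m)}$ directly. If you want to avoid that machinery you would have to construct a non-extendable jet directly from the quantitative failure of \eqref{eq:SV} at specific index pairs $(i,j)$, not by recycling Roumieu counterexamples as black boxes.
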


	We shall see in \Cref{assumptionssuperfluous} that
	both sides of the equivalence \eqref{eq:SVchar} imply
	$\fM (\preccurlyeq) \fN$ and non-quasianalyticity of $\fN$.
	Recall that $M \prec_{SV} N$ means
	\begin{equation}
		\label{eq:SV}
		\E C,s \in \N_{\ge 1}:~
		\sup_{j\ge 1}\sup_{0\le i < j} \Big(\frac{M_j}{s^j N_i} \Big)^{\frac{1}{j-i}}\frac{1}{j}\sum_{k = j}^\infty \frac{N_{k-1}}{N_k} \le C.
	\end{equation}

	We will deduce \Cref{thm:mainthm1} from the following result for Denjoy--Carleman classes of Roumieu type.
	It is due to \cite{surjectivity} under slightly stronger conditions;
	the version stated here is a special case of \cite[Theorem 3.2]{mixedramisurj}.

	\begin{theorem}
		\label{thm:SVMain}
		Let $M \preccurlyeq N$ be weight sequences with $\liminf_{p \rightarrow \infty} \left( m_p \right)^{1/p} > 0$. Then
		$\La^{\{M\}} \subseteq j^\infty_0 \cD^{\{N\}}([-1,1])$
		if and only if
		$M \prec_{SV} N$.
	\end{theorem}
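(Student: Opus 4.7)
The two directions use complementary techniques classical in the Denjoy--Carleman theory. Sufficiency is built constructively via a Carleson--Ehrenpreis cut-off expansion adapted to Roumieu classes, with the scale of the cut-offs read off from \eqref{eq:SV}. Necessity is established functional-analytically by promoting the set-theoretic inclusion to a quantitative LB-estimate and then testing against specific extremal sequences.

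\textbf{Sufficiency.} Fix $\theta \in \mathcal{D}((-1,1))$ with $\theta \equiv 1$ near $0$ and $|\theta^{(k)}|_\infty \le A^{k+1} k!$. Given $\lambda \in \Lambda^{\{M\}}$ with $|\lambda_p| \le s_0^p M_p$, form
\[
f(x) := \sum_{p=0}^\infty \lambda_p\,\frac{x^p}{p!}\,\theta(x/a_p),
\]
where, for a suitable enlargement $s$ of $s_0$ by the SV constant in \eqref{eq:SV},
\[
a_p \sim \frac{M_p}{s^{p+1}\, N_{p-1}}\,\frac{1}{p}\sum_{k \ge p} \frac{N_{k-1}}{N_k}.
\]
Since $\theta \equiv 1$ near $0$, termwise differentiation at $0$ gives $f^{(n)}(0) = \lambda_n$. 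For the Roumieu estimate I would expand each summand by Leibniz and split the $p$-sum at $p = n$: the range $p \ge n$ contributes geometrically small terms thanks to $a_p \to 0$, while the range $p < n$ is controlled precisely by \eqref{eq:SV}---the factor $\bigl(M_j/(s^j N_i)\bigr)^{1/(j-i)}$ absorbs the ratio produced by differentiating $x^p$ enough times, and the tail $\tfrac{1}{j}\sum_{k \ge j} N_{k-1}/N_k$ is exactly the summability gain from the choice of $a_p$. Adding everything up yields $\|f\|_{[-1,1],r}^N < \infty$ for some $r = r(s_0)$, so $f \in \mathcal{D}^{\{N\}}([-1,1])$.

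\textbf{Necessity.} Write both sides as LB-spaces, $\Lambda^{\{M\}} = \varinjlim_s \Lambda^M_s$ and $\mathcal{D}^{\{N\}}([-1,1]) = \varinjlim_r \mathcal{D}^N_r$ (regular, in fact (DFS), under the hypothesis $\liminf m_p^{1/p} > 0$ together with $M \preccurlyeq N$). Grothendieck's factorisation theorem applied to the continuous Borel map then converts the hypothetical inclusion into a quantitative statement: for every $s$ there exist $r, C$ such that each $\lambda$ with $\|\lambda\|_s^M \le 1$ admits a representative in $\{f : \|f\|_{[-1,1],r}^N \le C\}$. To extract \eqref{eq:SV}, fix $0 \le i < j$ and apply this to the extremal test sequence $\lambda_p := \delta_{pj}\, s^j M_j$. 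The resulting $f$ satisfies $f^{(j)}(0) = s^j M_j$, $f^{(k)}(0) = 0$ for $k \ne j$, and vanishes to all orders at $\pm 1$; iterated integration by parts on $[0,1]$ then expresses $f^{(j)}(0)$ in terms of sup-bounds of $f^{(k)}$ for $k$ large, each controlled by $C r^k N_k$. The tail of $N_{k-1}/N_k$ enters precisely as the summability factor coming from these integrations (non-quasianalyticity of $N$ is \emph{forced} by the argument, not assumed), and optimising the resulting inequality in the free parameters $i$ and $k$ yields \eqref{eq:SV}.

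\textbf{Main obstacle.} The sufficiency direction, though technical, reduces to a fairly mechanical Leibniz-plus-splitting computation once the scales $a_p$ are correctly set. The real difficulty is the final extraction step in the necessity direction: matching the functional-analytic estimates on $f^{(k)}$ to the specific combinatorial form of \eqref{eq:SV} requires a careful choice of auxiliary indices and repeated use of the non-quasianalytic tail, and is sensitive to the precise normalisation of constants. The detailed execution is carried out in \cite{surjectivity} and, in the generality stated here, in \cite[Theorem 3.2]{mixedramisurj}, and I would follow their argument directly.
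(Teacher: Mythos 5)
The paper gives no proof of this statement: Theorem~\ref{thm:SVMain} is imported as a known result, ``due to \cite{surjectivity} under slightly stronger conditions,'' with the stated version being a special case of \cite[Theorem 3.2]{mixedramisurj}. Your concluding fallback---to follow those two references directly---is therefore exactly the paper's own treatment. Moreover, your necessity sketch (regular (LB)/(DFS) structure, Grothendieck factorization yielding, for each $s$, constants $r,C$ and preimages of norm $\le C$ for the unit ball of $\Lambda^M_s$, then testing against $\lambda_p=\delta_{pj}\,s^jM_j$ and extracting the tails $\sum_{k\ge j}N_{k-1}/N_k$ by a Bang-type iterated estimate) is faithful to how \cite[Proposition 4.3 and Theorem 4.4]{surjectivity} argue; the paper itself reuses precisely this scheme in its proof of Theorem~\ref{thm:mainthm1}($\Rightarrow$) in Section~\ref{sec:right}.

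Your sufficiency sketch, however, contains a genuine error: there is no $\theta\in\cD((-1,1))$ with $|\theta^{(k)}|_\infty\le A^{k+1}k!$, since such bounds force $\theta$ to be real analytic and hence, having compact support, identically zero. This is not a cosmetic slip. With a single scaled cut-off $\theta(x/a_p)$, the Leibniz expansion produces only factors $a_p^{-k}\,|\theta^{(k)}|_\infty$, and to close the estimate in the range $p<n$ against $r^nN_n$ one needs cut-offs lying in a non-quasianalytic class adapted to $N$: the standard constructions (Petzsche, Chaumat--Chollet, and \cite{surjectivity} itself) build $\phi_p$ as iterated convolutions of normalized characteristic functions with scales proportional to the quotients $N_{k-1}/N_k$, so that the support width is comparable to the tail $\sum_{k\ge j}N_{k-1}/N_k$---this is exactly where non-quasianalyticity of $N$ (which is indeed forced by \eqref{eq:SV}, cf.\ Lemma~\ref{assumptionssuperfluous}) enters, and where the tail factor in \eqref{eq:SV} originates. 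Relatedly, your scale $a_p$ ignores the root structure of \eqref{eq:SV}: the condition involves $\sup_{0\le i<j}\bigl(M_j/(s^jN_i)\bigr)^{1/(j-i)}$, and the single choice $i=p-1$ without the $(j-i)$-th root cannot reproduce it; the correct scales are geometric-mean-type quantities over the whole range of $i$. Since you ultimately defer to \cite{surjectivity} and \cite[Theorem 3.2]{mixedramisurj}, your outcome coincides with the paper's, but the sufficiency sketch as written would not assemble into a proof without replacing the cut-off construction.
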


	Here $\cD^{(N)}([-1,1])$ (resp.\ $\cD^{\{N\}}([-1,1])$) denotes the space of $\cE^{(N)}$ (resp.\ $\cE^{\{N\}}$) functions supported in $[-1,1]$.

	\subsection{Auxiliary results}
	We show first that both sides of the equivalence \eqref{eq:SVchar} imply $\fM (\preccurlyeq) \fN$ and non-quasianalyticity of $\fN$.
	Similar results hold
	in the  Roumieu case; cf. \cite{RoumieuMoment}.

	\begin{lemma}\label{assumptionssuperfluous}
		Let $\fM$ and $\fN$ be weight matrices. Both sides of the equivalence \eqref{eq:SVchar} imply
		$\fM (\preccurlyeq) \fN$ and non-quasianalyticity of $\fN$.
	\end{lemma}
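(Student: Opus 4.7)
The plan is to establish $\fM (\preccurlyeq) \fN$ and the non-quasianalyticity of $\fN$ from each of the two sides of \eqref{eq:SVchar} separately.

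For the right-hand side, suppose that for every $y>0$ there exists $x>0$ with $M^{(x)} \prec_{SV} N^{(y)}$. The sum $\sum_{k=j}^\infty N^{(y)}_{k-1}/N^{(y)}_k$ appearing in \eqref{eq:SV} must be finite, and since it is the tail of $\sum_k 1/\nu^{(y)}_k$, this immediately forces every $N^{(y)}$ to be non-quasianalytic, so $\fN$ is non-quasianalytic. For the order relation I specialize \eqref{eq:SV} to $i=j-1$ and use the trivial lower bound $\sum_{k \ge j} N^{(y)}_{k-1}/N^{(y)}_k \ge N^{(y)}_{j-1}/N^{(y)}_j = 1/\nu^{(y)}_j$ to obtain $M^{(x)}_j \le Cs^j j\, N^{(y)}_j$; taking $j$-th roots yields $(M^{(x)}_j/N^{(y)}_j)^{1/j} \le s(Cj)^{1/j}$, which is bounded in $j$ and gives $M^{(x)} \preccurlyeq N^{(y)}$, hence $\fM (\preccurlyeq) \fN$.

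For the left-hand side, suppose $\La^{(\fM)} \subseteq j^\infty_0 \cE^{(\fN)}(\R)$. Since $j^\infty_0 \cE^{(\fN)}(\R) \subseteq \La^{(\fN)}$, we get $\La^{(\fM)} \subseteq \La^{(\fN)}$, and the characterization of sequence-space inclusion recalled in the subsection on order relations of weight matrices yields $\fM (\preccurlyeq) \fN$. For non-quasianalyticity of $\fN$, the plan is to test against the single sequence $a := (p!)_{p \in \N}$. The assumption \eqref{eq:realanalytic} guarantees $(m^{(x)}_p)^{1/p} \to \infty$, so for every $r,x>0$ the inequality $M^{(x)}_p \ge r^{-p} p!$ holds eventually; hence $\sup_p p!/(r^p M^{(x)}_p) < \infty$ and $a \in \La^{(\fM)}$. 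Now suppose for contradiction that some $N^{(y_0)}$ is quasianalytic. Then $\cE^{(\fN)}(\R) \subseteq \cE^{(N^{(y_0)})}(\R)$ lies inside a quasianalytic class. The hypothesized $f \in \cE^{(\fN)}(\R)$ with $j^\infty_0 f = a$ and the real-analytic function $g(x) := 1/(1-x)$ both lie in $\cE^{(N^{(y_0)})}((-1,1))$ (for $g$ this again uses $(m^{(y_0)}_p)^{1/p}\to\infty$) and have the same $0$-jet at $0$; the Denjoy--Carleman uniqueness theorem forces $f \equiv g$ on $(-1,1)$, contradicting the continuity of $f$ at the point $1$, since $g(x) \to \infty$ as $x \to 1^-$.

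The main obstacle lies in the non-quasianalyticity assertion on the left-hand side. Whereas the right-hand side embeds non-quasianalyticity directly in the combinatorics of the SV-condition, the left-hand side requires one to produce a concrete sequence in $\La^{(\fM)}$, valid for \emph{every} admissible weight matrix $\fM$, that provably cannot arise as the $0$-jet of a quasianalytic function globally defined on $\R$. The real-analyticity hypothesis \eqref{eq:realanalytic} built into the definition of a weight matrix is precisely what makes the universal choice $a = (p!)$ available, and the pole of $1/(1-x)$ at $1$ supplies the global-extension obstruction on the target side.
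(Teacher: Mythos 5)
Your proof is correct, and for two of the four sub-claims it takes a genuinely different route from the paper. For the right-hand side your treatment is essentially the paper's, except that where the paper cites \cite[Lemma 3.2]{maximal} for $M^{(x)} \prec_{SV} N^{(y)} \Rightarrow M^{(x)} \preccurlyeq N^{(y)}$, you verify it directly by specializing \eqref{eq:SV} to $i=j-1$ and bounding the tail sum below by its first term; that computation is valid and makes the step self-contained. For the left-hand side the paper argues differently on both counts: it establishes $\fM(\preccurlyeq)\fN$ by an explicit construction of a sequence $a \in \La^{(\fM)}$ supported on a sparse set of indices $k_j$ that cannot be a jet when the relation fails (in effect re-proving the hard direction of the sequence-space inclusion criterion), whereas you simply compose $\La^{(\fM)} \subseteq j^\infty_0\cE^{(\fN)}(\R) \subseteq \La^{(\fN)}$ with the cited equivalence $\La^{(\fM)}\subseteq\La^{(\fN)} \Leftrightarrow \fM(\preccurlyeq)\fN$ from \cite[Proposition 4.6(1)]{compositionpaper} -- shorter, at the cost of leaning on that reference. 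For non-quasianalyticity of $\fN$ from the left-hand side, the paper splits into cases according to whether $\fM$ is quasianalytic and, in the quasianalytic case, invokes \cite[Theorem 6]{borelmappingquasianalytic}; your argument via the single test sequence $(p!)$ and the pole of $1/(1-x)$ is the classical Carleman-type obstruction, is more elementary, avoids both the case distinction and the external citation, and correctly uses that \eqref{eq:realanalytic} guarantees both $(p!)\in\La^{(\fM)}$ and $1/(1-x)\in\cE^{(N^{(y_0)})}$ on compact subsets of $(-1,1)$. The one point worth making explicit is that the Denjoy--Carleman uniqueness theorem you apply is the local version on the interval $(-1,1)$ (for the Roumieu bounds on each compact subinterval), not the global statement for $\cE^{(M)}(\R)$ quoted in Section 2; this is standard and covered by the references the paper gives.
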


	\begin{proof}
		By \cite[Lemma 3.2]{maximal},
		$M^{(x)} \prec_{SV} N^{(y)}$ implies $M^{(x)} \preccurlyeq N^{(y)}$
		so that $\fM (\preccurlyeq) \fN$ is clearly a consequence of the right-hand side of \eqref{eq:SVchar}.

		To see that it also follows from the left-hand side,
		suppose that $\fM (\preceq) \fN$ is violated which means
		that there is $y>0$ such that $(M^{(x)}_k/N^{(y)}_k)^{1/k}$ is unbounded for all $x>0$.
		Thus, for all $j \in \N_{\ge 1}$ we find $k_j \ge j$ such that
		\[
		\Big(\frac{M^{(1/j)}_{k_j}}{N^{(y)}_{k_j}}\Big)^{1/k_j} \ge j.
		\]
		Consider the sequence $a=(a_{\ell})$ with $a_{k_j} = (\frac{1}{j})^{k_j} M_{k_j}^{(1/j)}$ and $a_\ell = 0$ otherwise.
		Then $a \in\Lambda^{(\fM)}$, because for given $h,z>0$ and
		$j$ so large that $\frac{1}{j}\le \min\{h, z\}$,
		we have
		$|a_{k_j}| =(\tfrac{1}{j})^{k_j} M_{k_j}^{(1/j)}\le h^{k_j}M^{(z)}_{k_j}$.
		On the other hand, we claim that $a\notin j^{\infty}_0 \cE^{(\fN)}(\R)$.
		Indeed, if there is $f\in\mathcal{E}^{(\fN)}(\R)$
		with $j^{\infty}_0 f=a$, then
		$N^{(y)}_{k_j}\le a_{k_j}=f^{(k_j)}(0)\le A_{h,z} h^{k_j}N^{(z)}_{k_j}$
		for all $h,z>0$ and $j$;
		a contradiction for $z = y$ and $h=1/2$.

		To infer non-quasianalyticity of $\fN$, we distinguish two cases.
		If $\fM$ is non-quasianalytic
		so is $\fN$, since we already know that $\fM (\preccurlyeq) \fN$.
		If $\fM$ is quasianalytic, then the assertion follows either from \cite[Theorem 6]{borelmappingquasianalytic},
		which shows that no (proper) quasianalytic class
		is contained in the image of the Borel map of any other quasianalytic class,
		or from the observation that $M^{(x)} \prec_{SV} N^{(y)}$
		cannot hold if $N^{(y)}$ is quasianalytic (since then \eqref{eq:SV} is infinite).
	\end{proof}

	We restate \cite[Lemme 16]{ChaumatChollet94} which is crucial for the reduction.

	\begin{lemma}\label{Beurling-Surjectivitytheorem1}
		Let $(\alpha_j)$ be a sequence of nonnegative real numbers such that $\sum_{j=1}^{\infty}\alpha_j<\infty$.
		Let $(\beta_j)$ and $(\gamma_j)$ be sequences of positive real numbers such that
		$\lim_{j\rightarrow\infty}\beta_j=0=\lim_{j\rightarrow\infty}\gamma_j$, and assume that $(\gamma_j)$ is decreasing.
		Then there exists an increasing sequence $(\theta_j)$ tending to $\infty$ such that
		\begin{enumerate}[label=\thetag{\arabic*}]
			\item $\theta_j\gamma_j$ is decreasing,
			\item $\theta_j\beta_j \to 0$,
			\item $\sum_{k=j}^{\infty}\theta_k\alpha_k\le 8\theta_j\sum_{k=j}^{\infty}\alpha_k$ for all $j \ge 1$.
		\end{enumerate}
	\end{lemma}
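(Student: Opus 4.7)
The plan is to build $\theta$ via a dyadic block decomposition keyed to the tail sums of $\alpha$, with the block boundaries refined to also handle the decay of $\beta$ and $\gamma$. Set $T_j:=\sum_{k\ge j}\alpha_k$; after handling the trivial case where $T_j$ vanishes from some index on, we may assume $T_j>0$ for all $j$, so that $T_j\downarrow 0$.

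For condition (3), the idea is a geometric-sum argument. Choose $1=j_0<j_1<j_2<\cdots$ inductively so that $T_{j_{l+1}}\le T_{j_l}/2$ and, in addition, $\sup_{k\ge j_{l+1}}\beta_k$ is small enough that $c_l\cdot\sup_{k\ge j_{l+1}}\beta_k\to 0$ as $l\to\infty$, where $c_l:=(3/2)^l$; this is possible because $T_j,\beta_j\to 0$. Set $\theta_j:=c_l$ on each block $[j_l,j_{l+1})$. For $j\in[j_m,j_{m+1})$ the minimality of $j_{m+1}$ yields $T_j>T_{j_m}/2$, hence
\[
\sum_{k\ge j}\theta_k\alpha_k \le \sum_{l\ge m}c_l\bigl(T_{j_l}-T_{j_{l+1}}\bigr) \le c_m T_{j_m}\sum_{l\ge 0}(3/4)^l \le 4\,c_m T_{j_m} \le 8\,\theta_j T_j,
\]
which is (3). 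Condition (2) follows directly from the extra requirement on $\beta$ built into the choice of the $j_l$, and $\theta_j$ is clearly nondecreasing with $\theta_j\to\infty$.

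The delicate point is (1). With the step function just described, $\theta_j\gamma_j$ is decreasing inside each block, but at a block boundary one needs $\gamma_{j_{l+1}}/\gamma_{j_{l+1}-1}\le c_l/c_{l+1}=2/3$, which can fail when $\gamma$ has no large single drops (for example, $\gamma_j=1/j$). The remedy will be to smooth each jump by inserting a short increasing ramp $c_l=x_0<x_1<\cdots<x_K=c_{l+1}$ of intermediate values at the start of block $l+1$, and to choose $j_{l+1}$ far enough out that $\gamma$ decreases enough across the ramp to keep $\theta_j\gamma_j$ decreasing at every consecutive pair; this is always possible because $\gamma_j\to 0$. On the ramp $\theta_j\le c_{l+1}$, so the geometric estimate for (3) is unchanged up to an absorbable constant. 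This smoothing, together with the mutual compatibility of the three decay rates feeding the inductive choice of the $j_l$, is the main technical obstacle; once it is in place, the three verifications are routine and the lemma follows.
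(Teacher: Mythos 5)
The paper itself gives no proof of this lemma --- it is quoted verbatim from Chaumat--Chollet \cite[Lemme 16]{ChaumatChollet94} --- so your construction has to stand on its own. Your overall strategy (dyadic blocks keyed to the tails $T_j=\sum_{k\ge j}\alpha_k$, geometric levels $c_l=(3/2)^l$, and a ramp to reconcile the jumps with the monotonicity of $\theta_j\gamma_j$) is the right kind of argument and can be completed, but as written there are two genuine gaps. First, the step ``the minimality of $j_{m+1}$ yields $T_j>T_{j_m}/2$'' is unjustified: $j_{m+1}$ is minimal subject to \emph{two} simultaneous requirements (halving of $T$ and smallness of $\sup_{k\ge j_{m+1}}\beta_k$), so for $j_m\le j<j_{m+1}$ it may be the $\beta$-requirement that is the binding one, and $T_j\le T_{j_m}/2$ is then entirely possible; your displayed chain needs $T_{j_m}\le 2T_j$ in its last inequality, so this is a real hole. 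It is repaired without any lower bound on $T_j$: splitting the tail at $j_{m+1}$ and summing by parts, the halving condition alone gives $\sum_{l\ge m+1}c_l(T_{j_l}-T_{j_{l+1}})\le 2c_{m+1}T_{j_{m+1}}=3c_mT_{j_{m+1}}$, hence $\sum_{k\ge j}\theta_k\alpha_k\le c_m(T_j-T_{j_{m+1}})+3c_mT_{j_{m+1}}\le 3c_mT_j$.

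Second, the ramp --- which you yourself flag as ``the main technical obstacle'' and leave unconstructed --- must actually be written down, and its interaction with the \emph{fixed} constant $8$ must be checked: at the foot of the ramp inside a block one only has $\theta_j\ge c_{m-1}=\tfrac23c_m$, and your crude bound $4c_mT_{j_m}$ is not visibly $\le 8\cdot\tfrac23c_mT_j$, whereas the sharp bound $3c_mT_j$ from the previous paragraph is (since $3\le\tfrac{16}{3}$). A concrete ramp that works: on the block $[j_{l+1},j_{l+2})$ set
\[
\theta_j:=c_l\min\Bigl(\tfrac32,\ \frac{\gamma_{j_{l+1}}}{\gamma_j}\Bigr),
\]
and add to the inductive choice of $j_{l+2}$ the requirement $\gamma_{j_{l+2}}\le\tfrac23\gamma_{j_{l+1}}$ (possible since $\gamma_j\to0$). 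Then $\theta$ is nondecreasing with $c_l\le\theta_j\le c_{l+1}$ on that block and reaches $c_{l+1}$ before the block ends, $\theta_j\gamma_j=c_l\min\bigl(\tfrac32\gamma_j,\gamma_{j_{l+1}}\bigr)$ is nonincreasing across all of $\N$ (including block boundaries), which is (1), and (2) and (3) go through as above. With these two repairs your proof is complete, and it is in spirit the same elementary block-and-ramp construction as the original one in Chaumat--Chollet.
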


	\subsection{Scheme of proof}
	\label{sec:scheme}
	The direction $\Rightarrow$ in  \eqref{eq:SVchar}
	follows from a rather direct generalization of the proof of \cite[Theorem 4.7]{mixedramisurj} which we
	sketch in \Cref{sec:right}.

	The more delicate part is the converse implication. Our aim is to reduce its proof to the Roumieu case, i.e., \Cref{thm:SVMain}. More specifically,
	we show that for any given $\la \in \Lambda^{(\fM)}$ we find weight sequences $R,S$ such that
	\begin{enumerate}[label=\thetag{\roman*}]
		\item $\la \in \Lambda^{\{R\}}$,
		\item $R \prec_{SV} S$,
		\item $\cE^{\{S\}}(\R) \subseteq \cE^{(\fN)}(\R)$.
	\end{enumerate}
	Then \Cref{thm:SVMain} (together with \Cref{assumptionssuperfluous}) gives the desired conclusion.

	\subsection{Proof of \Cref{thm:mainthm1}($\Leftarrow$)} \label{sec:left}

	We construct the sequences $R,S$ in several steps.

	\begin{step*}[I]
		Up to equivalence, we may assume that $\fM$ and $\fN$ satisfy the following conditions:
		\begin{enumerate}[label=\thetag{\alph*}]
			\item For all $\al \in \N_{\ge 1}$,
			\begin{equation}\label{liminfimportant}
				N^{(\frac{1}{\al})}_j \ge 2^j N_j^{(\frac{1}{\al+1})}, \quad \text{ for large enough } j.
			\end{equation}
			\item For all $y>0$ we have $M^{(y)} \prec_{SV} N^{(y)}$ with $C=s =1$ in \eqref{eq:SV}.
			\item For all $y>0$ we have $M^{(y)}\le N^{(y)}$.
		\end{enumerate}
	\end{step*}

	\begin{proof}
		(a) follows from \Cref{lem:ML}.

		(b), (c) Fix $y>0$. By assumption, there is $x=x(y)$ such that $M^{(x)} \prec_{SV} N^{(y)}$ and thus
		$M^{(x)} \preceq N^{(y)}$. We may assume that $y \mapsto x(y)$ is increasing and $x(y) \le y$ (by the order of the weight matrices).
		Then $(M^{(x(y))})_{y>0}$ is equivalent to $\fM$.
		Finally, there exists an increasing function $r$ with $r(0)=0$ such that the family $\fM'$ with $M'^{(y)}_j:= r(y)^j M^{(x(y))}_j$ satisfies the additional assumption of (b) and (c).
		This matrix is not normalized, but we can use an analogous technique as in the proof of \Cref{lem:ML} to force this as well.

		Note that all constructions yield matrices that are still ordered with respect to their quotients.
	\end{proof}

	We assume from now on that $\fM$ and $\fN$ satisfy (a),(b), and (c).
	Fix $\la \in \La^{(\fM)}$.

	\begin{step*}[II]
		There exist a decreasing 0-sequence $(\ve_j)$ and a strictly increasing sequence of positive integers $(a_\al)$ such that
		\begin{equation}
			\label{eq:epsdef}
			|\lambda_j|\le\varepsilon_1\cdots\varepsilon_jM^{(\frac{1}{\alpha+1})}_j, \quad \text{ if }~ a_{\alpha}\le j< a_{\alpha+1}.
		\end{equation}
	\end{step*}

	\begin{proof}
		By definition of $\La^{(\fM)}$, the sequence $\ve^{(\al)}:= (\ve^{(\al)}_j)$ defined by
		\[
		\varepsilon^{(\alpha)}_j := \sup_{k\ge j}\Big(\frac{|\lambda_k|}{M^{(\frac{1}{\alpha+1})}_k}\Big)^{1/k}
		\]
		is decreasing and tending to $0$ for each $\al$. By the order of $\fM$, we also have $\ve^{(\al)}\le \ve^{(\al+1)}$.
		We define sequences $(a_{\al})$ and $(a'_{\al})$ of positive integers as follows:
		\begin{itemize}
			\item Set $a_1:=1$.
			\item For given $a_{\alpha}$, we choose $a'_{\alpha}$ and in turn $a_{\alpha+1}$ such that
			\[
			\varepsilon^{(\alpha+1)}_{a_{\alpha+1}}< \varepsilon^{(\alpha)}_{a'_{\alpha}}\le\frac{1}{1+\alpha}\varepsilon^{(\alpha)}_{a_{\alpha}}.
			\]
		\end{itemize}
		It is clear that the sequences $(a_{\al})$ and $(a'_{\al})$ are strictly increasing and interlacing.
		Finally, define $\ve = (\ep_j)$ by
		\begin{equation*}
			\varepsilon_j:=\varepsilon^{(\alpha)}_j\;\;\;\text{for}\;a_{\alpha}\le j\le a'_{\alpha},\hspace{20pt}\varepsilon_j:=\varepsilon^{(\alpha)}_{a'_{\alpha}}\;\;\;\text{for}\;a'_{\alpha}<j<a_{\alpha+1}.
		\end{equation*}
		Then $\ve$ is decreasing, tending to $0$, and, by construction
		\begin{equation*}
			\Big(\frac{|\lambda_j|}{M^{(\frac{1}{\alpha+1})}_j}\Big)^{1/j}\le\varepsilon_j, \quad \text{ if }~ a_{\alpha}\le j< a_{\alpha+1},
		\end{equation*}
		which gives \eqref{eq:epsdef}.
	\end{proof}

	\begin{step*}[III]
		There exist an increasing sequence $(\ul \mu_j)$ with $\ul \mu_j/j \to \infty$, and strictly increasing sequences of integers $(b_\al)$ and $(C_\al)$
		such that $\ul M_j := \ul \mu_0 \ul \mu_1 \cdots \ul \mu_j$ satisfies $\ul M \le C_\al M^{(1/\alpha)}$, for all $\al$, and
		\begin{gather} \label{eq:ulMMal}
			M^{(1/\alpha)}_j\le\underline{M}_j, \quad \text{ for all } \al \text{ and } j\le b_{\alpha}.
		\end{gather}
	\end{step*}

	\begin{proof}
		Let $(a_\al)$ be the sequence from Step (II).
		We define sequences of positive integers $(b_{\alpha})$ and $(b'_{\alpha})$ as follows:

		\begin{itemize}
			\item Set $b'_1:=1$.
			\item For given $b'_{\alpha}$, we choose $b_{\alpha}$ such that
			\begin{equation}\label{underlineMbelowunderlineNassump}
				b_{\alpha}>\max\{a_{\alpha},b'_{\alpha}\} \quad \text{ and } \quad \mu^{(\frac{1}{\alpha+1})}_{j}\ge\alpha j, \quad \text{ for } j\ge b_{\alpha}.
			\end{equation}
			\item For given $b_{\alpha}$, we choose $b'_{\alpha+1}>b_{\alpha}$ minimal to ensure
			\[
			\mu^{(\frac{1}{\alpha+1})}_{b'_{\alpha+1}}>\mu^{(\frac{1}{\al})}_{b_{\alpha}}.
			\]
		\end{itemize}
		Note that $(b_{\alpha})$ and $(b'_{\alpha})$ are strictly increasing, interlacing, and $\mu^{(\frac{1}{\alpha+1})}_j\le\mu^{(\frac{1}{\al})}_{b_{\alpha}}$ for all $j\le b'_{\alpha+1}-1$.
		Finally, set
		\begin{equation}\label{muunderlinedef}
			\underline{\mu}_j:=\mu^{(\frac{1}{\al})}_j,\quad \text{ for } b'_{\alpha}\le j\le b_{\alpha},
			\qquad \underline{\mu}_j:=\mu^{(\frac{1}{\al})}_{b_{\alpha}},\quad \text{ for } b_{\alpha}<j<b'_{\alpha+1},
		\end{equation}
		and $\ul \mu_0:=1$.
		By construction, $\ul \mu_j$ is increasing, $\ul \mu_j/j \to \infty$, and \eqref{eq:ulMMal} holds.
		For fixed $\al$, one has $\ul \mu_j\le \mu_j^{(1/\al)}$ for all $j \ge b_\al'$
		which yields $\ul M \le C_\al M^{(1/\al)}$ for some positive constant $C_\al$.
		Clearly, we may assume that $C_\al$ are integers, strictly increasing in $\al$.
	\end{proof}

	\begin{step*}[IV]
		There exist an increasing sequence $(\ul \nu_j)$ tending to $\infty$,
		strictly increasing sequences of positive integers
		$(c_\al)$ and $(d_\al)$, and an increasing sequence $(D_\al)$ tending to $\infty$ such that
		$\ul N_j := \ul \nu_0 \ul \nu_1 \cdots \ul \nu_j$ satisfies
		$\ul N \le D_\al N^{(1/\alpha)}$, for all $\al$,
		\begin{gather} \label{eq:IV1}
			N^{(1/\alpha)}_j\le\underline{N}_j,  \quad \text{ for all } \al \text{ and } j\le d_{\alpha},
		\end{gather}
		and there is a constant $D \ge 1$ such that, for all $\al$ and $c_{\alpha}\le j < c_{\alpha+1}$,
		\begin{gather} \label{eq:IV2}
			\sum_{k \ge j} \frac{1}{{\ul \nu}_k} \le 2 \sum_{k \ge j} \frac{1}{\nu_k^{(\frac{1}{\al+2})}},
			\\ \label{eq:IV3}
			C_{\alpha+3} N^{(\frac{1}{\alpha+3})}_i\le D2^{j-i}\underline{N}_i, \quad \text{ for all }~ 0\le i< j,
		\end{gather}
		where $C_\al$ are the constants from Step (III).
	\end{step*}

	\begin{proof}
		We define sequences $(c_{\al})$ and $(d_{\al})$ of positive integers as follows:
		\begin{itemize}
			\item Set $c_1:=1$ and $d_0:= 0$.
			\item For given $c_{\alpha}$, we choose $d_{\alpha} \ge C_{\al+4} + d_{\al-1}$ such that
			\begin{gather}\label{nuunderlinedef1}
				\sum_{k> d_{\alpha}}\frac{1}{\nu^{(\frac{1}{\alpha+1})}_k}\le\frac{1}{2}\sum_{k> c_{\alpha}}\frac{1}{\nu^{(\frac{1}{\al})}_k},
				\\ \label{nuunderlinedef10}
				N^{(\frac{1}{\alpha+2})}_{j} \ge 2^j N^{(\frac{1}{\alpha+3})}_j, \quad \text{ for } j\ge d_{\alpha}.
			\end{gather}
			\item For given $d_{\alpha}$, we choose $c_{\alpha+1}>d_{\alpha}$ minimal such that
			\begin{equation}\label{nuunderlinedef2}
				\nu^{(\frac{1}{\alpha+1})}_{c_{\alpha+1}}>\nu^{(\frac{1}{\alpha})}_{d_{\alpha}}.
			\end{equation}
		\end{itemize}
		Then $(c_\al)$ and $(d_\al)$ are strictly increasing and interlacing.
		Set
		\begin{equation}\label{nuuunderlinedef}
			\underline{\nu}_j:=\nu^{(\frac{1}{\al})}_j,\quad c_{\alpha}\le j\le d_{\alpha},
			\qquad \underline{\nu}_j:=\nu^{(\frac{1}{\al})}_{d_{\alpha}},\quad d_{\alpha} < j<  c_{\alpha+1},
		\end{equation}
		and $\underline{\nu}_0:=1$.
		Completely analogous to Step (III), we may conclude that $\ul N \le D_\al N^{(1/\alpha)}$ and \eqref{eq:IV1}.

		Let us show \eqref{eq:IV3}. It clearly suffices to show the claim for $\al \ge 3$
		(the finitely many remaining values can be controlled by possibly enlarging $D$).
		So let $\al \ge 3$, $c_\al \le j < c_{\al+1}$, and $0\le i < j$.
		Our construction yields $j \ge c_\al > d_{\al-1} \ge C_{\al+3}$, and therefore
		\[
		C_{\al+3} N^{(\frac{1}{\alpha+3})}_0 = C_{\al+3}\le 2^{d_{\al-1}} \le 2^{j}=2^j{\ul N}_0
		\]
		which finishes the case $i = 0$. So let $1\le i < j$.
		There is $\be\le \al$ such that $c_\be \le i < c_{\be+1}$. If $\beta\ge 2$, then $d_{\beta-1}\le i\le d_{\beta+1}$.
		By \eqref{eq:IV1} and \eqref{nuunderlinedef10},
		\[
		\frac{\underline{N}_i}{N^{(\frac{1}{\alpha+3})}_i}\ge\frac{N^{(\frac{1}{\beta+1})}_i}{N^{(\frac{1}{\alpha+3})}_i}
		\ge\frac{N^{(\frac{1}{\beta+1})}_i}{N^{(\frac{1}{\beta+2})}_i}\ge 2^i.
		\]
		Since $2^j \ge d_{\alpha-1}\ge C_{\alpha+3}$, \eqref{eq:IV3} follows.
		It remains to consider $1\le i\le d_1$, in which case
		$N^{(\frac{1}{\alpha+3})}_i\le N^{(1)}_i=\underline{N}_i$ is clear and $2^{j-i}\ge 2^{d_{\alpha-1}-d_1}\ge C_{\alpha+3}$.
		Thus \eqref{eq:IV3} is proved.

		Let us now prove \eqref{eq:IV2}. First assume $c_{\alpha}\le j\le d_{\alpha}$.  Then
		\[
		\sum_{k\ge j}\frac{1}{\underline{\nu}_k}=\sum_{k=j}^{d_{\alpha}}\frac{1}{\nu^{(\frac{1}{\alpha})}_k}
		+\sum_{i\ge 1} \Big( \frac{c_{\alpha+i}-d_{\alpha+i-1}-1}{\nu^{(\frac{1}{\alpha+i-1})}_{d_{\alpha+i-1}}}
		+\sum_{k=c_{\alpha+i}}^{d_{\alpha+i}}\frac{1}{\nu^{(\frac{1}{\alpha+i})}_k} \Big).
		\]
		By the minimal choice of $c_{\al+i}$ (see \eqref{nuunderlinedef2}),
		\begin{equation}\label{nuunderlinedef4}
			\frac{c_{\alpha+i}-d_{\alpha+i-1}-1}{\nu^{(\frac{1}{\alpha+i-1})}_{d_{\alpha+i-1}}}
			\le
			\sum_{k=d_{\alpha+i-1}+1}^{c_{\alpha+i}-1}\frac{1}{\nu_{k}^{(\frac{1}{\alpha+i})}},
		\end{equation}
		whence
		\begin{align*}
			\sum_{k\ge j}\frac{1}{\underline{\nu}_k}
			&\le\sum_{k=j}^{d_{\alpha}}\frac{1}{\nu^{(\frac{1}{\alpha+1})}_k}
			+\sum_{i\ge 1} \sum_{k=d_{\alpha+i-1}+1}^{d_{\alpha+i}} \frac{1}{\nu_{k}^{(\frac{1}{\alpha+i})}}
			\\&
			=\sum_{k=j}^{d_{\alpha+1}}\frac{1}{\nu^{(\frac{1}{\alpha+1})}_k}+\sum_{i\ge 2}\sum_{k=d_{\alpha+i-1}+1}^{d_{\alpha+i}}\frac{1}{\nu_{k}^{(\frac{1}{\alpha+i})}}.
		\end{align*}
		Using \eqref{nuunderlinedef1},
		we find
		\begin{align*}
			\sum_{k\ge d_{\alpha+i-1}+1}\frac{1}{\nu_{k}^{(\frac{1}{\alpha+i})}}
			\le \frac{1}{2^{i-1}}\sum_{k\ge d_{\alpha}+1}\frac{1}{\nu_{k}^{(\frac{1}{\alpha+1})}}
		\end{align*}
		from which it is easy to conclude
		\begin{align}
			\label{eq:estimate1}
			\sum_{k\ge j}\frac{1}{\underline{\nu}_k}
			&\le 2\sum_{k\ge j}\frac{1}{\nu^{(\frac{1}{\alpha+1})}_k},
		\end{align}
		in particular, \eqref{eq:IV2}.
		If $d_{\alpha}<j<c_{\alpha+1}$, then, using \eqref{eq:estimate1} for $j = c_{\al+1}$, we find
		\[
		\sum_{k\ge j}\frac{1}{\underline{\nu}_k}
		= \sum_{k=j}^{c_{\alpha+1}-1}\frac{1}{\nu^{(\frac{1}{\al})}_{d_{\alpha}}}+\sum_{k\ge c_{\alpha+1}}\frac{1}{\underline{\nu}_k}
		\le \sum_{k=j}^{c_{\alpha+1}-1}\frac{1}{\nu^{(\frac{1}{\alpha+2})}_k}+2\sum_{k\ge c_{\al+1}}\frac{1}{\nu^{(\frac{1}{\alpha+2})}_k}
		\le 2\sum_{k\ge j}\frac{1}{\nu^{(\frac{1}{\alpha+2})}_k}.
		\]
		Thus \eqref{eq:IV2} is proved.
	\end{proof}

	\begin{step*}[V]
		There exist weight sequences $R, S$ such that $(r_j)^{1/j} \to \infty$ and
		\begin{enumerate}[label=\thetag{\roman*}]
			\item $\la \in \Lambda^{\{R\}}$,
			\item $R \prec_{SV} S$,
			\item $\cE^{\{S\}}(\R) \subseteq \cE^{(\fN)}(\R)$.
		\end{enumerate}
	\end{step*}

	\begin{proof}
		For the construction of $R$, we
		apply \Cref{Beurling-Surjectivitytheorem1} to
		\[
		\alpha_j:=0, \quad \beta_j:=\max\Big\{\varepsilon_j,\frac{j}{(\underline{M}_j)^{1/j}}\Big\}, \quad \gamma_j:=\frac{1}{\underline{\mu}_j}.
		\]
		This yields an increasing sequence $(\theta_j)$ tending to $\infty$ such that $\theta_j\gamma_j$ is decreasing and $\theta_j\beta_j\to 0$.
		We can assume $\theta_0=1$. Since  $\th_j \ga_j \le \th_j \be_j$ (as $(\underline{M}_j)^{1/j} \le \ul \mu_j$), also $\theta_j\ga_j\to 0$. Then
		\[
		R_j:=\prod_{i = 0}^j \frac{{\ul \mu}_j}{\th_j} = \frac{\ul M_j}{\theta_0 \th_1\cdots\theta_j}
		\]
		is a weight sequence (not necessarily normalized). We have
		$(r_j)^{1/j} \to \infty$,
		since
		\[
		\frac{j}{(R_j)^{1/j}}
		=\frac{j (\theta_1\cdots\theta_j)^{1/j}}{(\underline{M}_j)^{1/j}}\le\frac{j\theta_j}{(\underline{M}_j)^{1/j}}\le\theta_j\beta_j.
		\]
		By \eqref{eq:epsdef}, \eqref{eq:ulMMal}, and \eqref{underlineMbelowunderlineNassump},
		\[
		|\la_j| \le \ve_1\cdots\ve_j M_j^{(\frac{1}{\al+1})} \le \ve_1\th_1 \cdots \ve_j\th_j R_j.
		\]
		Since $\ve_j\th_j\le \be_j\th_j \rightarrow 0$, we get $\la \in \Lambda^{\{R\}}$. This finishes the proof of (i).

		To obtain $S$ we
		apply \Cref{Beurling-Surjectivitytheorem1} to
		\[
		\alpha'_j=\gamma'_j:=\frac{1}{\underline{\nu}_j},\quad  \beta'_j:=\max\Big\{\frac{1}{\sqrt{\theta_{\lfloor j/2\rfloor}}},\frac{1}{\underline{\nu}_j}\Big\},
		\]
		where $\lfloor j/2\rfloor$ denotes the integer part of $j/2$.
		We obtain an increasing sequence $(\theta'_j)$ tending to $\infty$
		such that $\theta'_j\gamma'_j$ is decreasing, $\theta'_j\beta'_j\to 0$, and
		\begin{equation}\label{Beurling-equ2}
			\sum_{k=j}^{\infty}\frac{\theta'_k}{\underline{\nu}_k}\le 8\theta'_j\sum_{k=j}^{\infty}\frac{1}{\underline{\nu}_k}, \quad \text{ for all }j.
		\end{equation}
		Let $\th'_0:=1$.
		Then
		\[
		S_j:=A^j \prod_{i = 0}^j \frac{{\ul \nu}_j}{\th_j'} = A^j \frac{\ul N_j}{\th_0' \th_1' \cdots \th_j'}
		\]
		is a weight sequence.
		Here $A$ is a constant chosen such that $A\ge \max\{1,\frac{\th'_1}{{\ul \nu}_1}\}$ and
		\begin{gather}
			\label{eq:th'th1}
			\frac{\th'_i}{\th_i} \le A,
			\\
			\label{eq:th'th2}
			\frac{\th'_j}{(\th_{i+1} \cdots \th_j)^{\frac{1}{j-i}}} \le A, \quad \text{ if } 0 \le i < j.
		\end{gather}
		That \eqref{eq:th'th1} and \eqref{eq:th'th2} are possible is seen as follows.
		It is easy to see that the choice of $\be'_j$ enables \eqref{eq:th'th1}.
		For $0\le i< \lfloor j/2\rfloor$, we have
		\begin{align*}
			(\theta_{i+1}\cdots\theta_j)^{\frac{1}{j-i}}
			\ge(\theta_{\lfloor j/2\rfloor}\cdots\theta_j)^{\frac{1}{j-i}}
			\ge(\theta_{\lfloor j/2\rfloor})^{\frac{j-\lfloor j/2\rfloor}{j-i}}
			\ge\sqrt{\theta_{\lfloor j/2\rfloor}},
		\end{align*}
		since $\th_j$ is increasing. If $\lfloor j/2\rfloor\le i \le j-1$, then $(\theta_{i+1}\cdots\theta_j)^{1/(j-i)}\ge\theta_{i+1}\ge\theta_{\lfloor j/2\rfloor}$.
		The choice of $\be'_j$ shows that the left-hand side of \eqref{eq:th'th2} is bounded.

		Let us now show that $R \prec_{SV} S$.
		Fix $0\le i < j$.
		There is $\al$ such that $c_\al \le j < c_{\al+1}$.  We have (with $\si_k = S_k/S_{k-1}$)
		\begin{align*}
			\sum_{k=j}^{\infty}\frac{1}{\sigma_k}
			& \stackrel{\eqref{Beurling-equ2}}{\le}  \frac{8}{A} \theta'_j\sum_{k=j}^{\infty}\frac{1}{\underline{\nu}_k}
			\stackrel{\eqref{eq:IV2}}{\le} \frac{16}{A} \theta'_j \sum_{k=j}^{\infty}\frac{1}{\nu^{(\frac{1}{\alpha+2})}_k}
			\le \frac{16}{A} \theta'_j \sum_{k=j}^{\infty}\frac{1}{\nu^{(\frac{1}{\alpha+3})}_k}
			\\&
			\stackrel{\thetag{I}_b}{\le} \frac{16}{A} j \theta'_j \Big(\frac{N^{(\frac{1}{\alpha+3})}_i}{M^{(\frac{1}{\alpha+3})}_j}\Big)^{\frac{1}{j-i}}
			\stackrel{\thetag{III}}{\le} \frac{16}{A} j \theta'_j\Big(\frac{C_{\alpha+3}N^{(\frac{1}{\alpha+3})}_i}{\underline{M}_j}\Big)^{\frac{1}{j-i}}
			\\
			&\stackrel{\eqref{eq:IV3}}{\le} \frac{32D}{A} j \theta'_j \Big(\frac{\underline{N}_i}{\underline{M}_j}\Big)^{\frac{1}{j-i}}
			= \frac{32D}{A} j \theta'_j \Big(\frac{\theta'_1\cdots\theta'_iS_i}{A^i\theta_1\cdots\theta_ j R_j}\Big)^{\frac{1}{j-i}}\\
			&\stackrel{\eqref{eq:th'th1}}{\le}
			\frac{32D}{A} j \theta'_j \Big(\frac{S_i}{\theta_{i+1}\cdots\theta_ j R_j}\Big)^{\frac{1}{j-i}}
			\stackrel{\eqref{eq:th'th2}}{\le}
			32D j \Big(\frac{S_i}{R_j}\Big)^{\frac{1}{j-i}},
		\end{align*}
		which finishes the proof of (ii).

		For (iii) observe that, by Step (IV),
		\begin{align*}
			\frac{S_j}{N^{(1/\al)}_j} = \frac{A^j}{\th_1' \cdots \th_j'} \frac{\ul N_j}{ N^{(1/\al)}_j}
			\le D_\al  \frac{A^j}{\th_1' \cdots \th_j'}.
		\end{align*}
		We conclude that $S \lhd N^{(1/\al)}$ for all $\al$, since $\th_j'\to \infty$.
	\end{proof}

	\begin{step*}[VI]
		There exists $f \in \cE^{(\fN)}(\R)$ such that $j^\infty_0 f=\la$.
	\end{step*}

	\begin{proof}
		By Step (V) and \cite[Lemma 3.2]{maximal}, \Cref{thm:SVMain} can be applied to $R$ and $S$.
		Thus there exists $f \in \cE^{\{S\}}(\R)$ with $j^\infty_0f=\la$. By $\thetag{V}_{iii}$, we know that $f \in \cE^{(\fN)}(\R)$.
	\end{proof}

	\subsection{Proof of \Cref{thm:mainthm1}($\Rightarrow$)} \label{sec:right}

	Since $\La^{(\fM)} \subseteq j^\infty_0 \cE^{(\fN)}(\R)$ implies that $\fN$ is non-quasianalytic,
	by \Cref{assumptionssuperfluous}, and so there exist $\cE^{(\fN)}(\R)$-cutoff functions,
	e.g.\ by \cite[Corollary 3.2 and Theorem 11.16]{Rainer:2021aa},
	we have  $\La^{(\fM)} \subseteq j^\infty_0 \cD^{(\fN)}([-1,1])$.

	Now we follow the ideas of \cite[Proposition 4.3 and Theorem 4.4]{surjectivity}.
	Let $E_{m,k}$ be $\cD^{(\fN)}([-1,1])$ endowed with the norm $ \|f\|_{m,k} := \|f\|_{[-1,1],\frac{1}{m}}^{N^{(\frac{1}{k})}}$
	and let $F_{m,k}$ be its completion.
	As in \cite[Proposition 4.3]{surjectivity}, one sees that, for all $m,k$, there exists a continuous linear right inverse
	$T_{m,k}: \Lambda^{(\fM)} \rightarrow F_{m,k}$  of $j^\infty_0|_{F_{m,k}}$.
	Then for every $m \in \N_{\ge 1}$ we can find $s \in \N_{\ge 1}$ and $C>0$ such that
	\[
	\|T_{m,1}(a)\|_{m,1} \le C \|a\|_s, \quad a\in \Lambda^{(\fM)},
	\]
	where $\|\cdot\|_s:=\|\cdot\|^{M^{(\frac{1}{s})}}_{\frac{1}{s}}$.
	The proof of \cite[Theorem 4.4]{surjectivity}, applied to the sequences $M^{(\frac{1}{s})}$ and $N^{(\frac{1}{m})}$,
	yields
	$M^{(\frac{1}{s})} \prec_{SV} N^{(\frac{1}{m})}$.
	This ends the proof of \Cref{thm:mainthm1}.

	\section{The duals of $\cE^{(\fM)}(\R)$ and $\La^{(\fM)}$} \label{sec:duals}

	In this section, we identify the duals of $\cE^{(\fM)}(\R)$ and $\La^{(\fM)}$ with weighted spaces of entire functions.
	This will be of crucial importance in the proof of the second main result, i.e., \Cref{thm:mainthm2}.

	\subsection{Weighted spaces of entire functions}

	Let $g:\C \rightarrow [0,\infty)$ be a continuous function with $\lim_{|z|\rightarrow\infty}g(z)=\infty$.
	We define the Banach space
	\[
	A_g:= \Big\{f\in \cH(\C): ~ \|f\|_{A_g}:= \sup_{z \in \C}\frac{|f(z)|}{e^{g(z)}} <\infty \Big\}.
	\]
	Given an increasing sequence of continuous functions $\cG= (g_k)_k$ of the mentioned type, we define
	\[
	\cA_{\cG}:= \bigcup_{k\in \N} A_{g_k},
	\]
	and endow it with the natural inductive limit topology.
	Sometimes we need to work with $L^2$ weights.
	To this end, we set
	\[
	A_g^2:= \Big\{f\in \cH(\C): ~ \|f\|_{A_g^2}:= \Big(\int_{\C}|f(z)|^2e^{-g(z)} \,d\la (z) \Big)^{1/2} <\infty \Big\},
	\]
	where $\la$ denotes the Lebesgue measure in $\C$,
	and define the corresponding inductive limit $\cA^2_{\cG}$ analogously.

	\begin{lemma}
		\label{lem:Apincl}
		Let $g:\C \rightarrow [0,\infty)$ be a continuous function with $\lim_{|z|\rightarrow\infty}g(z)=\infty$. Then
		\[
		\|f\|_{A^2_{2g+\log(1+|z|^4)}} \le 3\pi \|f\|_{A_g}, \quad f \in A_g,
		\]
		in particular, $A_g \hookrightarrow A^2_{2g+\log(1+|z|^4)}$.

		Let $h:\C \rightarrow [0,\infty)$ be another continuous function and assume there exists $K>0$ such that
		\begin{equation}
			\label{eq:cons1}
			g(z+u) \le h(z) + K, \quad  z,u \in \C,~|u|\le 1.
		\end{equation}
		Then
		\[
		\|f\|_{A_{\frac{h}{2}}} \le e^{K} \|f\|_{A^2_g}, \quad f \in A^2_g,
		\]
		in particular,
		$A^2_g \hookrightarrow A_{\frac{h}{2}}$.
	\end{lemma}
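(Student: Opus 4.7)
The lemma consists of two fairly standard weighted-space embeddings, and I would prove them in sequence.

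\textbf{First inclusion.} For the bound $\|f\|_{A^2_{2g+\log(1+|z|^4)}}\le 3\pi\|f\|_{A_g}$, the idea is to factor the integrand. For $f\in A_g$ we have pointwise $|f(z)|^2 e^{-2g(z)}\le \|f\|_{A_g}^2$, so
\[
\|f\|_{A^2_{2g+\log(1+|z|^4)}}^2=\int_{\C}\frac{|f(z)|^2 e^{-2g(z)}}{1+|z|^4}\,d\lambda(z)\le \|f\|_{A_g}^2\int_{\C}\frac{d\lambda(z)}{1+|z|^4}.
\]
The last integral is finite: switching to polar coordinates and substituting $u=r^2$ reduces it to $2\pi\cdot\tfrac{1}{2}\int_0^\infty(1+u^2)^{-1}du=\pi^2/2$. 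Taking square roots yields a constant $\pi/\sqrt{2}$, which is comfortably bounded by $3\pi$.

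\textbf{Second inclusion.} For $\|f\|_{A_{h/2}}\le e^{K}\|f\|_{A^2_g}$, I would use the sub-mean-value inequality: since $|f|^2$ is subharmonic, for every $z\in\C$,
\[
|f(z)|^2\le \frac{1}{\pi}\int_{|u|\le 1}|f(z+u)|^2\,d\lambda(u).
\]
Now insert $e^{-g(z+u)}e^{g(z+u)}$ inside the integral and apply hypothesis \eqref{eq:cons1}, which says $g(z+u)\le h(z)+K$ uniformly for $|u|\le 1$. This gives
\[
|f(z)|^2\le \frac{e^{h(z)+K}}{\pi}\int_{|u|\le 1}|f(z+u)|^2 e^{-g(z+u)}\,d\lambda(u)\le \frac{e^{h(z)+K}}{\pi}\|f\|_{A^2_g}^2.
\]
Taking square roots and rearranging, $|f(z)|e^{-h(z)/2}\le \pi^{-1/2}e^{K/2}\|f\|_{A^2_g}\le e^K\|f\|_{A^2_g}$, where the last step is a crude (but sufficient) bound since $K\ge 0$ and $\pi^{-1/2}\le 1\le e^{K/2}$. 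Taking the supremum over $z$ finishes the proof.

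\textbf{Obstacle.} Neither step is conceptually difficult; the only points requiring attention are (i) verifying convergence of $\int_\C(1+|z|^4)^{-1}d\lambda$ and keeping track of the constant so that it does not exceed $3\pi$, and (ii) ensuring that the radius-$1$ disk in the submean-value estimate is exactly what \eqref{eq:cons1} is tailored to, so that the uniform bound $g(z+u)\le h(z)+K$ may be pulled outside the integral. Both continuous inclusions $A_g\hookrightarrow A^2_{2g+\log(1+|z|^4)}$ and $A^2_g\hookrightarrow A_{h/2}$ are then immediate from the norm estimates.
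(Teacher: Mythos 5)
Your proof is correct and follows essentially the same route as the paper: bound the $L^2$ integrand pointwise against $\|f\|_{A_g}^2(1+|z|^4)^{-1}$ for the first inclusion, and use the (sub-)mean-value property of $|f|^2$ on the unit disk together with \eqref{eq:cons1} for the second. The only cosmetic difference is that the paper invokes the mean value property of the holomorphic function $f^2$ rather than subharmonicity of $|f|^2$; both yield the same estimate.
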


	\begin{proof}
		For $f \in A_g$,
		\begin{align*}
			\|f\|_{A_{2g+\log(1+|z|^4)}^2}^2 &= \int_{\C} |f(z)|^2e^{-2g(z)-\log(1+|z|^4)}\,d\la(z)
			\le
			\|f\|_{A_{g}}^2 \int_{\C}\frac{d\la(z)}{1+|z|^4}
		\end{align*}
		implies the first statement.
		For the second claim, we observe that an entire function $f$ fulfills
		$f(z) = \frac{1}{\pi} \int_{|u|\le 1} f(z+u)\,d\la(u)$ for each $z \in \C$,
		which follows from Cauchy's integral formula and switching to polar coordinates.
		Thus,
		\[
		f(z)^2 = \frac{1}{\pi} \int_{|u|\le 1} f(z+u)^2 e^{g(z+u) - g(z+u)}\, d\la(u),
		\]
		and therefore
		\begin{align*}
			|f(z)|^2 \le \frac{1}{\pi} e^{h(z) + K}\int_{\C} |f(z+u)|^2e^{-g(z+u)}\,d\la(u)
		\end{align*}
		which gives the desired result.
	\end{proof}

	\begin{remark}
		The proof shows that $\log(1+|z|^4)$ can be replaced by any function $\rh$ such that $e^{-\rh} \in L^1(\C)$;
		of course, the constant has to be adjusted accordingly.
	\end{remark}

	Let us now show that, under some mild constraints on the family $\cG$, the corresponding inductive limit is regular.

	\begin{proposition}
		\label{prop:regular}
		Let $\cG=(g_k)_k$ be an increasing family of
		continuous functions $g_k : \C \to [0,\infty)$ tending to infinity as $|z|\to \infty$
		such that for all $k$
		\begin{equation} \label{eq:regular}
			\lim_{|z|\rightarrow \infty} g_{k+1}(z)-g_k(z) = \infty.
		\end{equation}
		Then $\cA_{\cG}$ is regular, complete, ultrabornological, reflexive, and webbed.
	\end{proposition}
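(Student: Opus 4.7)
The plan is to show that the assumption \eqref{eq:regular} forces each linking map $A_{g_k} \hookrightarrow A_{g_{k+1}}$ to be \emph{compact}, so that $\cA_\cG$ is a (DFS)-space (equivalently, an (LS)-space, a countable inductive limit of Banach spaces with compact linking maps). Once this structural fact is established, each of the five listed properties follows from standard functional-analytic theorems in the theory of (DFS)-spaces, so the only real work is the compactness of the embeddings.

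To prove that the inclusion $\iota_k: A_{g_k} \hookrightarrow A_{g_{k+1}}$ is compact, I would take a sequence $(f_n) \subset A_{g_k}$ with $\|f_n\|_{A_{g_k}} \le 1$, so that $|f_n(z)| \le e^{g_k(z)}$ for all $z$. Since $g_k$ is continuous, $(f_n)$ is uniformly bounded on every compact subset of $\C$, hence by Montel's theorem there is a subsequence (still denoted $f_n$) converging locally uniformly to some entire function $f$; the pointwise bound passes to the limit so $f \in A_{g_k}$. To upgrade this to convergence in $\|\cdot\|_{A_{g_{k+1}}}$, fix $\ve > 0$. By \eqref{eq:regular} choose $R > 0$ so that $g_{k+1}(z) - g_k(z) \ge \log(4/\ve)$ whenever $|z| \ge R$; for such $z$,
\[
\frac{|f_n(z) - f(z)|}{e^{g_{k+1}(z)}} \le \frac{2\,e^{g_k(z)}}{e^{g_{k+1}(z)}} \le \frac{\ve}{2}.
\]
On the compact disc $|z|\le R$, uniform convergence of $f_n$ to $f$ combined with the fact that $e^{-g_{k+1}}$ is bounded there yields $\sup_{|z|\le R} |f_n(z)-f(z)|\,e^{-g_{k+1}(z)} < \ve/2$ for $n$ large, so $\|f_n - f\|_{A_{g_{k+1}}} < \ve$. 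This gives compactness of $\iota_k$.

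With the compactness of all linking maps in hand, $\cA_\cG$ is a (DFS)-space, and I would simply quote the standard consequences: such spaces are regular (even boundedly retractive), complete, and Montel, hence reflexive; they are ultrabornological as countable strict-style inductive limits of Banach spaces (every inductive limit of ultrabornological spaces is ultrabornological, and Banach spaces are ultrabornological); and they are webbed because each Banach step is webbed and the class of webbed spaces is stable under countable inductive limits. The main (and really only) obstacle is the compactness step above; the rest is bookkeeping via De Wilde's theorem and the structure theory of (DFS)-spaces, as developed, e.g., in Meise--Vogt or K\"othe.
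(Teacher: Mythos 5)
Your proposal is correct and follows essentially the same route as the paper: prove compactness of the linking maps $A_{g_k}\hookrightarrow A_{g_{k+1}}$ via Montel's theorem together with \eqref{eq:regular}, then invoke the standard structure theory of (DFS)-spaces (the paper cites Meise--Vogt for exactly these consequences). The compactness argument you give matches the paper's step by step, so there is nothing to add.
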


	\begin{proof}
		We will show that the connecting mappings are compact.
		Then the statements follow from \cite[Satz 25.19, 25.20, 24.23, and Bemerkung 24.36]{MeiseVogt}.

		Take $p:= g_k$ and $q:=g_{k+1}$. We show that the inclusion $A_p \hookrightarrow A_q$ is compact.
		Let $(f_j)$ be a bounded sequence in $A_p$, i.e., there exists $D>0$ such that
		\[
		|f_j(z)|\le D e^{p(z)}, \quad z \in \C,  \text{ for all }j.
		\]
		Then this family of entire functions is locally uniformly bounded.
		Thus, by Montel's theorem, there exists a subsequence $(f_{j_k})$ that converges uniformly on compact subsets to $f \in \cH(\C)$.
		Clearly, $|f(z)|\le D e^{p(z)}$ for all $z\in\C$.
		Let us show that $(f_{j_k})$ converges to $f$ in $A_q$.
		Let $\ve>0$. Choose $R>0$ such that
		\[
		e^{p(z)-q(z)}<\frac{\ve}{2D}, \quad |z|> R,
		\]
		where we use \eqref{eq:regular},
		and let $k_0$ be such that
		\[
		|f_{j_k}(z)-f(z)|<\ve, \quad k \ge k_0,~ |z|\le R.
		\]
		It follows that $\|f_{j_k}-f\|_{A_q}<\ve$
		for $k \ge k_0$.
	\end{proof}

	\subsection{The spaces $\cA_{\Om_\fM^+}$ and $\cA_{\Om_\fM}$}
	\label{sec:Om_A}

	Given a weight matrix $\fM = (M^{(x)})_{x>0}$,
	we consider the sequences of functions
	\begin{align*}
		\Om_\fM^+ &:= \big(z\mapsto k|\Im z|+\om_{M^{(1/k)}}(k z) \big)_k,
		\\
		\Om_\fM &:= \big( z\mapsto \om_{M^{(1/k)}}(kz) \big)_k,
	\end{align*}
	and the associated spaces $\cA_{\Om_\fM^+}$ and $\cA_{\Om_\fM}$.

If the weight matrix is clear from the context, we also write $\om^{(k)}(z):=\om_{M^{(1/k)}}(z)$.
	Note that $\om^{(k)}\le\om^{(l)}$ if $k\le l$ by the definition of associated weight functions. Let us now see that \Cref{prop:regular} is applicable to $\cA_{\Om_\fM^+}$ and $\cA_{\Om_\fM}$.

	Let $l>k>0$. Then \eqref{eq:countingrepr} implies for all $t\ge 0$
$$\om^{(l)}(lt) - \om^{(k)}(kt)\ge\om^{(k)}(lt) - \om^{(k)}(kt)=\int_{kt}^{lt}\frac{\mu_{M^{(1/k)}}(\la)}{\la}\,d\la\ge\mu_{M^{(1/k)}}(kt)\log(l/k).$$
So for all $l>k>0$ we get that $\om^{(l)}(lz) - \om^{(k)}(kz) \to \infty$ as $|z| \to \infty$ and thus we are able to infer the following corollary from Proposition \ref{prop:regular}.

	\begin{corollary}
		\label{cor:dualproperties}
		$\cA_{\Om_\fM^+}$ and $\cA_{\Om_\fM}$ are regular, complete, ultrabornological, reflexive, and webbed.
	\end{corollary}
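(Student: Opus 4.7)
The plan is to simply verify the hypotheses of \Cref{prop:regular} for both families $\Om_\fM^+$ and $\Om_\fM$ and then invoke that proposition. The bulk of the work is verifying the asymptotic gap condition \eqref{eq:regular}; the other hypotheses (continuity, nonnegativity, and $g_k(z)\to\infty$ as $|z|\to\infty$) are immediate consequences of the fact that each $\om_{M^{(1/k)}}$ is a pre-weight function (continuous, nonnegative, tending to $\infty$) together with the nonnegativity of the term $k|\Im z|$.

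For $\Om_\fM$, the computation is exactly the one sketched in the paragraph immediately preceding the corollary: using the integral representation \eqref{eq:countingrepr}, for $l>k$ one has
\[
\om^{(l)}(l|z|) - \om^{(k)}(k|z|) \ge \om^{(k)}(l|z|) - \om^{(k)}(k|z|) \ge \mu_{M^{(1/k)}}(k|z|)\log(l/k),
\]
and since $\mu_{M^{(1/k)}}(t) \to \infty$ as $t \to \infty$ (being the counting function of the quotient sequence, which tends to $\infty$), the right-hand side goes to $\infty$ as $|z|\to\infty$. Applying this with $l = k+1$ yields \eqref{eq:regular} for $\Om_\fM$.

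For $\Om_\fM^+$, I would reduce to the case just treated. Writing $g_k(z) = k|\Im z| + \om^{(k)}(k|z|)$, the point is that $k \mapsto k|\Im z|$ is nondecreasing, so
\[
g_{k+1}(z) - g_k(z) = |\Im z| + \bigl(\om^{(k+1)}((k+1)|z|) - \om^{(k)}(k|z|)\bigr) \ge \om^{(k+1)}((k+1)|z|) - \om^{(k)}(k|z|),
\]
and the right-hand side tends to $\infty$ by the previous step. Hence \eqref{eq:regular} holds for $\Om_\fM^+$ as well.

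I do not foresee a real obstacle here: the corollary is essentially just the observation that the estimate the authors have already produced is precisely the hypothesis of \Cref{prop:regular}. The only mildly non-routine point is ensuring that the extra $k|\Im z|$ summand in $\Om_\fM^+$ does not interfere with the gap condition, but since this term is monotone in $k$ and nonnegative, it only helps. Once \eqref{eq:regular} is established in both cases, \Cref{prop:regular} directly yields regularity, completeness, ultrabornologicity, reflexivity, and webbedness.
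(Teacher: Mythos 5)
Your proposal is correct and follows essentially the same route as the paper: the paper's proof consists precisely of the estimate $\om^{(l)}(lt)-\om^{(k)}(kt)\ge\mu_{M^{(1/k)}}(kt)\log(l/k)$ derived from \eqref{eq:countingrepr} in the paragraph preceding the corollary, followed by an appeal to \Cref{prop:regular}. Your only addition is to spell out that the $k|\Im z|$ term in $\Om_\fM^+$ is nonnegative and nondecreasing in $k$ and hence does not disturb the gap condition, which the paper leaves implicit.
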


	In what follows, unless mentioned otherwise, we assume that all weight sequences and matrices are normalized.

	\subsection{The dual of $\cE^{(\fM)}(\R)$}

	Let us recall a result of \cite{Taylor71}. For this we need the Fourier transform of a distribution $T \in \cE(\R)'$,
	\[
	\widehat{T}(z):= T(x\mapsto e^{ixz}).
	\]
	For a weight sequence $M$, set
	\[
	\lambda_M(t):= \sum_{j \ge 0} \frac{t^j}{M_j}, \quad t \ge 0.
	\]
	One immediately infers (cf.\ \cite[Proposition 4.5$(a)\Rightarrow(b)$]{Komatsu73})
	\begin{equation}
		\label{eq:equiv}
		e^{\om_M(t)} \le \lambda_M(t)\le 2e^{\om_M(2t)}, \quad t\ge 0.
	\end{equation}

	\begin{theorem}[{\cite[Theorem 2.8]{Taylor71}}]
		\label{thm:ultrafunctiondual}
		Let $M$ be a weight sequence. Then, for
		\[
		\fE^{(M)}(\R):= \big\{f \in C^\infty(\R): \A K \subset\subset \R, \A m \in \N, \A r >0 : \|f\|^M_{K,m,r}  <\infty \big\},
		\]
		where
		\begin{equation} \label{eq:snormfE}
			\|f\|^M_{K,m,r}:= \sup_{j \in \N,\, 0\le k \le m,\,  x \in K} \frac{|f^{(j+k)}(x)|}{r^j M_j},
		\end{equation}
		endowed with its natural Fr\'echet topology, we have
		\[
		\fE^{(M)}(\R)' \cong \cA_{\Ga_M},
		\]
		where
		\[
		\Ga_{M}:=\big( z \mapsto  k\log(1+|z|) + \log(\lambda_M(k|z|))+k|\Im z| \big)_k,
		\]
		and the isomorphism (of locally convex spaces) is realized by the Fourier transform.
	\end{theorem}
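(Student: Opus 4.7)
The isomorphism is the Fourier transform $\cF\colon T \mapsto \widehat T$, $\widehat T(z) := T(e_z)$ with $e_z(x) := e^{ixz}$, and the plan splits into the usual three parts: continuity of $\cF$ into $\cA_{\Ga_M}$, injectivity, and surjectivity. For the first, I would first verify that $z \mapsto e_z$ is holomorphic from $\C$ into $\fE^{(M)}(\R)$: since $|e_z^{(p)}(x)| = |z|^p e^{-x\Im z}$, a direct estimate yields
\[
\|e_z\|_{[-N,N],m,r}^M \le e^{N|\Im z|}(1+|z|)^m \lambda_M(|z|/r),
\]
which is finite because $(M_j)^{1/j} \to \infty$ ensures $\lambda_M(t)<\infty$ for every $t\ge 0$. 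If $T \in \fE^{(M)}(\R)'$ satisfies $|T(\cdot)| \le C\|\cdot\|_{[-N,N],m,r}^M$, then choosing any integer $k \ge \max\{N, m, 1/r\}$ gives $|\widehat T(z)| \le C e^{g_k(z)}$, so $\widehat T \in A_{g_k} \subset \cA_{\Ga_M}$; continuity of $\cF$ is obtained by the same estimate applied to bounded subsets of $\fE^{(M)}(\R)'$. For injectivity, differentiation in $z$ at $z=0$ gives $\partial_z^j \widehat T|_0 = i^j T(x^j)$, so $\widehat T \equiv 0$ forces $T$ to annihilate all polynomials; hence the problem reduces to proving density of polynomials in $\fE^{(M)}(\R)$, which one can establish via local Taylor approximation combined with smooth truncation.

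The crux is surjectivity: given $F \in A_{g_k}$, produce $T \in \fE^{(M)}(\R)'$ with $\widehat T = F$. My plan is a Paley--Wiener inversion. Interpret $F$ as producing boundary values above and below $\R$, and define $T$ on a suitable dense subspace by a contour integral of the form
\[
T(\phi) := \frac{1}{2\pi i}\int_{\ga} F(\zeta)\, \Phi_\phi(\zeta)\, d\zeta,
\]
where $\Phi_\phi$ is a Cauchy-type/Fourier-type transform of $\phi$, and $\ga$ is a horizontal contour at height $\pm k$ chosen so that the exponential factor $e^{k|\Im \zeta|}$ in the bound on $F$ is absorbed by the support parameter $N \ge k$ entering the seminorm of $\phi$. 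The subexponential factor $\lambda_M(k|\zeta|) \lesssim e^{\om_M(2k|\zeta|)}$, obtained from \eqref{eq:equiv}, must then be compensated by the decay of $\Phi_\phi$ on $\ga$, which is extracted from the derivative bounds $|\phi^{(j+k')}| \le \|\phi\|^M_{[-N,N],m,r}\, r^j M_j$ via repeated integration by parts (yielding Fourier decay of order $e^{-\om_M(\cdot/r)}$). The identity $\widehat T = F$ is then checked by Cauchy's theorem, and $T$ extends continuously to all of $\fE^{(M)}(\R)$ by density.

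The principal obstacle will be the matching of weights in this last step: the growth $\lambda_M(k|\zeta|) \sim e^{\om_M(2k|\zeta|)}$ and the available decay of $\Phi_\phi$ of the form $e^{-\om_M(|\zeta|/r)}$ must be tuned so that the contour integral converges and, more importantly, defines a bounded functional in terms of one fixed seminorm $\|\cdot\|^M_{[-N,N],m,r}$. This is where the parameters $k, N, m, r$ interact tightly and where the polynomial correction $(1+|z|)^k$ in the definition of $g_k$ (which accounts for the order parameter $m$ in the dual seminorms) must be handled carefully. Once this bookkeeping is in place, the Fourier inversion identity and the extension by continuity follow routinely, yielding the topological isomorphism $\fE^{(M)}(\R)' \cong \cA_{\Ga_M}$.
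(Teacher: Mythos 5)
The paper offers no proof of this statement: it is imported verbatim from Taylor \cite[Theorem 2.8]{Taylor71}, so there is nothing internal to compare your argument with. Your first block is correct and is the easy half: the estimate $\|e_z\|^M_{[-N,N],m,r}\le e^{N|\Im z|}(1+|z|)^m\lambda_M(|z|/r)$ together with $e^{\om_M(t)}\le\la_M(t)$ and monotonicity of $\la_M$ does place $\widehat T$ in $A_{g_k}$ for $k\ge\max\{N,m,1/r\}$, and continuity of $\cF$ on the (bornological) strong dual follows from equicontinuity of its bounded sets. The remaining two thirds, however, contain genuine gaps.

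For injectivity you reduce to density of the polynomials in $\fE^{(M)}(\R)$ and propose to obtain it ``via local Taylor approximation combined with smooth truncation''. That device fails: for a non-analytic $f\in C^\infty$ the Taylor polynomials at a point do not converge to $f$ on any neighbourhood, and truncation does not repair this. The density statement is true but requires a different mechanism, namely convolution of a cutoff of $f$ with the Gaussians $E_j(z)=\sqrt{j/\pi}\,e^{-jz^2}$, which produces \emph{entire} approximants converging in every seminorm $\|\cdot\|^M_{I_k,m,r}$; this is exactly \Cref{lem:density} in the appendix of the present paper (and it uses $(m_j)^{1/j}\to\infty$, a hypothesis your sketch never invokes). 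The more serious gap is surjectivity. Your contour-integral plan presupposes a ``Fourier-type transform $\Phi_\phi$'' of a general $\phi\in\fE^{(M)}(\R)$, but such $\phi$ have no decay whatsoever at infinity, so $\Phi_\phi$ does not exist; and you cannot retreat to compactly supported $\phi$, because the theorem is stated for arbitrary weight sequences, including quasianalytic ones, for which $\fE^{(M)}(\R)$ contains no nontrivial compactly supported function at all. Likewise, ``repeated integration by parts yielding decay $e^{-\om_M(\cdot/r)}$'' is a statement about compactly supported ultradifferentiable functions, not about elements of $\fE^{(M)}(\R)$. The actual content of Taylor's theorem is precisely this converse direction: one must represent a given $F\in A_{g_k}$ as the transform of a functional carried by $[-k,k]$, e.g.\ by factoring out an entire multiplier comparable to $e^{\om_M}$ on the real axis (an ultradifferential-operator/measure decomposition) or by verifying the analytically uniform structure in the sense of Ehrenpreis. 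None of this is present in the sketch, so the surjectivity step remains unproved rather than merely a matter of ``bookkeeping''.
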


	For a weight matrix $\fM$, we set
	\[
	\fE^{(\fM)}(\R):=\bigcap_{x>0} \fE^{(M^{(x)})}(\R).
	\]
	Note that $\fE^{(M)}(\R) = \cE^{(M)}(\R)$ (resp., $\fE^{(\fM)}(\R)=\cE^{(\fM)}(\R)$),
	if $M$ (resp., $\fM$) is derivation closed.

	\begin{proposition}
		\label{prop:entireiso}
		Let $\fM$ be derivation closed. Then, as locally convex spaces,
		\[
		\cA_{\Ga_\fM} \cong \cA_{\Om_\fM^{+}},
		\]
		where
		\[
		\Ga_{\fM}:= \big( z \mapsto k\log(1+|z|)+\log( \lambda_{M^{(1/k)}}(k|z|)) +k|\Im z| \big)_k .
		\]
	\end{proposition}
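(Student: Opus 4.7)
The plan is to establish the isomorphism by showing the two inductive limits embed continuously in each other. The key observation is that $\Gamma_{\fM}$ and $\Om_\fM^+$ are, step for step, almost the same family: they share the $k|\Im z|$ term, and by \eqref{eq:equiv} the expressions $\log\lambda_{M^{(1/k)}}(k|z|)$ and $\om_{M^{(1/k)}}(k|z|)$ differ only by a doubling of the argument and an additive $\log 2$. The only real discrepancy is the extra factor $k\log(1+|z|)$ appearing in $\Gamma_{\fM}$, and this is exactly what derivation closedness absorbs via \Cref{lem:dcconsequence}.

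First I would verify the easy direction $\cA_{\Om_\fM^+}\hookrightarrow \cA_{\Ga_\fM}$. Using the left-hand inequality of \eqref{eq:equiv}, one has
\[
k|\Im z|+\om_{M^{(1/k)}}(k|z|)\le k|\Im z|+\log\lambda_{M^{(1/k)}}(k|z|)\le k\log(1+|z|)+\log\lambda_{M^{(1/k)}}(k|z|)+k|\Im z|,
\]
so the identity maps $A_{g_k}$ continuously (with operator norm $1$) into $A_{h_k}$, where $g_k$ and $h_k$ denote the $k$-th weights of $\Om_\fM^+$ and $\Ga_\fM$ respectively. Passing to the inductive limit gives a continuous inclusion.

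The real work is the reverse inclusion $\cA_{\Ga_\fM}\hookrightarrow\cA_{\Om_\fM^+}$. Starting from $f\in A_{h_k}$ and invoking the right-hand inequality of \eqref{eq:equiv}, one obtains
\[
h_k(z)\le k\log(1+|z|)+\log 2+\om_{M^{(1/k)}}(2k|z|)+k|\Im z|.
\]
Since $k\log(1+|z|)\le k\log 2+\log(1+|z|^k)$ and, substituting $t=2k|z|$, also $\log(1+|z|^k)\le \log(1+t^k)$, it remains to control the sum $\om_{M^{(1/k)}}(t)+\log(1+t^k)$. Here I would iterate the derivation-closedness condition \eqref{eq:dc} $k$ times to produce a chain of weight sequences in $\fM$ to which \Cref{lem:dcconsequence} applies; this yields an index $j=j(k)\ge k$ and a constant $C'$ with
\[
\om_{M^{(1/k)}}(t)+\log(1+t^k)\le \om_{M^{(1/j)}}(C't)+C'.
\]
Enlarging $j$ further (so that $j\ge 2kC'$ and $j\ge k$) and using monotonicity of $\om_{M^{(1/j)}}$ in both arguments, one replaces $\om_{M^{(1/j)}}(2kC'|z|)$ by $\om_{M^{(1/j)}}(j|z|)$ and $k|\Im z|$ by $j|\Im z|$. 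Putting everything together gives a bound $h_k(z)\le g_j(z)+\text{const}$, which provides a continuous inclusion $A_{h_k}\hookrightarrow A_{g_j}$, hence the desired continuous map between inductive limits.

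The two inclusions combine to an isomorphism of locally convex spaces. The main technical obstacle is the bookkeeping in the second direction: one must keep track of how many iterations of \eqref{eq:dc} are needed to absorb the polynomial factor $(1+|z|)^k$, and then ensure that the final index $j$ is large enough to swallow both the multiplicative constant on the argument and the $k|\Im z|$ term. Once this index is fixed, the estimate is essentially mechanical.
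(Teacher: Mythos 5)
Your argument is correct and is precisely the fleshed-out version of the paper's (one-line) proof, which likewise reduces the claim to showing the two inductive systems are equivalent via \eqref{eq:equiv} and the absorption of the polynomial factor $(1+|z|)^k$ through an iterated application of derivation closedness, i.e.\ \Cref{lem:dcconsequence}. The bookkeeping you describe (choosing $j=j(k)\ge k$ large enough to swallow the dilation constant and the $k|\Im z|$ term, using monotonicity of $\om_{M^{(1/j)}}$ in both the argument and the index) is exactly what the paper leaves to the reader.
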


	\begin{proof}
		Using \Cref{lem:dcconsequence} together with \eqref{eq:equiv}, one easily checks that the respective inductive systems are equivalent and thus the topologies coincide.
	\end{proof}

	We are ready to identify the dual of $\cE^{(\fM)}(\R)$.

	\begin{theorem}
		\label{thm:ultrafunctiondualmatrix}
		Let $\fM$ be derivation closed. Then
		$\cE^{(\fM)}(\R)' \cong \cA_{\Om_\fM^{+}}$.
	\end{theorem}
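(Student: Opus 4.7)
The plan is to derive this from Taylor's theorem (\Cref{thm:ultrafunctiondual}) via a projective--inductive limit duality, and then to convert $\Ga_\fM$ to $\Om_\fM^+$ using \Cref{prop:entireiso}. Derivation closedness yields $\cE^{(\fM)}(\R)=\fE^{(\fM)}(\R)$ as locally convex spaces, and restricting to the cofinal sequence of parameters $x=1/k$ we obtain the Fr\'echet-space description $\cE^{(\fM)}(\R)=\projlim_k \fE^{(M^{(1/k)})}(\R)$. It therefore suffices to realise a topological isomorphism $\cE^{(\fM)}(\R)'\cong\cA_{\Ga_\fM}$ concretely via the Fourier transform, after which a final application of \Cref{prop:entireiso} delivers the desired identification with $\cA_{\Om_\fM^+}$.

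The map is $\widehat T(z):=T(\xi\mapsto e^{i\xi z})$, which is well defined because $\xi\mapsto e^{i\xi z}$ is real analytic and hence lies in $\cE^{(\fM)}(\R)$ by \eqref{eq:realanalytic}. Continuity of $T\in\cE^{(\fM)}(\R)'$ provides $K,m,r,x,C>0$ with $|T(f)|\le C\|f\|^{M^{(x)}}_{K,m,r}$; inserting $f=e^{i\cdot z}$ and using \eqref{eq:equiv} yields an estimate of the form
$$|\widehat T(z)|\le C'(1+|z|)^m\,\lambda_{M^{(x)}}(|z|/r)\,e^{K|\Im z|}.$$
Choosing $k\in\N_{\ge 1}$ with $k\ge\max\{K,1/r,m,1/x\}$ and using that $M^{(1/k)}\le M^{(x)}$ (so $\lambda_{M^{(1/k)}}\ge\lambda_{M^{(x)}}$), one concludes $\widehat T\in A_{h_k}\subseteq\cA_{\Ga_\fM}$, where $h_k$ denotes the $k$th generator of $\Ga_\fM$. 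Conversely, given $F\in A_{h_k}\subseteq\cA_{\Ga_\fM}$, \Cref{thm:ultrafunctiondual} applied to the weight sequence $M^{(1/k)}$ produces $T_k\in\fE^{(M^{(1/k)})}(\R)'$ with $\widehat T_k=F$; restriction along the continuous inclusion $\cE^{(\fM)}(\R)\hookrightarrow\fE^{(M^{(1/k)})}(\R)=\cE^{(M^{(1/k)})}(\R)$ yields the preimage $T\in\cE^{(\fM)}(\R)'$ with $\widehat T=F$. Injectivity is automatic since the exponentials $\xi\mapsto e^{i\xi z}$ span a dense subspace.

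The main obstacle is upgrading this algebraic bijection to an isomorphism of locally convex spaces, i.e., matching the strong dual topology on $\cE^{(\fM)}(\R)'$ with the inductive-limit topology on $\cA_{\Ga_\fM}$. Continuity of the inverse $\cA_{\Ga_\fM}\to\cE^{(\fM)}(\R)'$ follows from the universal property of the inductive limit: for each $k$ the composition $A_{h_k}\to\fE^{(M^{(1/k)})}(\R)'\to\cE^{(\fM)}(\R)'$ (Taylor's theorem followed by the transpose of the continuous inclusion) is continuous. Continuity of the Fourier transform itself is then a consequence of De Wilde's closed graph theorem, since $\cA_{\Ga_\fM}$ is webbed by \Cref{cor:dualproperties} together with \Cref{prop:entireiso}, and the strong dual of the Fr\'echet space $\cE^{(\fM)}(\R)$ is ultrabornological in the present setting (being a strong dual of a countable reduced projective limit of reflexive Fr\'echet spaces). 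A final invocation of \Cref{prop:entireiso} completes the proof.
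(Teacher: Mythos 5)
Your proof is correct in substance but follows a genuinely different route from the paper's. The paper first establishes the abstract isomorphism $\bigcup_k \fE^{(M^{(1/k)})}(\R)' \cong \fE^{(\fM)}(\R)'$ realized by the restriction map: surjectivity via Hahn--Banach, injectivity via the density of entire functions (\Cref{lem:density}), and continuity of the inverse via an explicit computation with the polars $U_n^\circ$ of a fundamental system of $0$-neighborhoods (using that the dual of the Fr\'echet--Schwartz space $\cE^{(\fM)}(\R)$ is bornological); only then does it apply Taylor's theorem stepwise and \Cref{prop:entireiso}. You instead work with the concrete Fourier transform on $\cE^{(\fM)}(\R)'$ throughout: bijectivity comes from direct estimates on $\|e^{i\cdot z}\|^{M^{(x)}}_{K,m,r}$ and from lifting $F\in A_{h_k}$ through Taylor's theorem and restricting, and the hard continuity direction is delegated to De Wilde's closed graph theorem. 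This avoids the Hahn--Banach extension and the polar computation, at the price of having to verify the hypotheses of the closed graph theorem. Both arguments ultimately rest on the same pillars: Taylor's theorem, the density of entire functions, and the Fr\'echet--Schwartz property of $\cE^{(\fM)}(\R)$.

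A few points need shoring up. (i) Injectivity is not ``automatic'': that the exponentials (equivalently, the polynomials) span a dense subspace of $\cE^{(\fM)}(\R)$ is precisely what \Cref{lem:density} provides, and you should cite it --- if $\widehat T\equiv 0$, then $T$ annihilates all monomials (differentiate $z\mapsto e^{i\xi z}$ under $T$, which is legitimate since this map is holomorphic with values in $\cE^{(\fM)}(\R)$), and density of the polynomials finishes. (ii) For De Wilde's theorem you must also check that the graph of $T\mapsto\widehat T$ is sequentially closed; this is easy (strong convergence in $\cE^{(\fM)}(\R)'$ gives pointwise convergence of $\widehat{T_j}(z)$, and convergence in $\cA_{\Ga_\fM}$ dominates pointwise convergence), but it is a hypothesis, not a formality. (iii) Ultrabornologicity of the strong dual requires knowing that $\cE^{(\fM)}(\R)$ is reflexive, e.g.\ Fr\'echet--Schwartz; the paper proves this in the proof of \Cref{lem:toshow} via compactness of the connecting inclusions, and you should point there rather than assert it. (iv) The identity $\fE^{(M^{(1/k)})}(\R)=\cE^{(M^{(1/k)})}(\R)$ is unjustified: derivation closedness of the matrix $\fM$ does not make the individual sequences $M^{(1/k)}$ derivation closed. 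Fortunately you only need the continuous inclusion $\cE^{(\fM)}(\R)=\fE^{(\fM)}(\R)\hookrightarrow\fE^{(M^{(1/k)})}(\R)$, which holds by definition of the intersection, so this is a harmless misstatement.
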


	\begin{proof}
		We have $\fE^{(\fM)}(\R)' \cong \cE^{(\fM)}(\R)'$, since $\fM$ is derivation closed.
		First we show
		\[
		\bigcup_{k \in \N_{\ge 1}} \fE^{(M^{(1/k)})}(\R)' \cong \fE^{(\fM)}(\R)',
		\]
		where the union on the left carries the locally convex inductive limit topology
		and the isomorphism is given by the restriction map which we denote by $R$.
		Observe that, for $k \le l$, we have a continuous inclusion $\fE^{(M^{(1/k)})}(\R)' \hookrightarrow \fE^{(M^{(1/l)})}(\R)'$,
		and the locally convex inductive limit exists.

		The map $R$ is surjective, since we can extend each continuous functional on $\fE^{(\fM)}(\R)$ to some $\fE^{(M^{(1/k)})}(\R)$,
		by the Hahn--Banach theorem.
		By  \Cref{lem:density}, this extension is unique and thus $R$ is also injective.

		Let us now show continuity in both directions.
		Continuity of $R$ follows from continuity of its restriction to any fixed $\fE^{(M^{(1/k)})}(\R)'$ which is clear.
		Since $\cE^{(\fM)}(\R)$ is a Fr\'echet--Schwartz space (see the proof of \Cref{lem:toshow}),
		the dual $\cE^{(\fM)}(\R)'$ is bornological (cf.\ \cite[Satz 24.23]{MeiseVogt}).
		Thus it suffices to show that $R^{-1}$ maps bounded sets to bounded sets.
		Now
		\[
		U_n:=\big\{f \in \cE^{(\fM)}(\R): p_n(f):=\|f\|^{M^{(1/n)}}_{[-n,n],1/n} \le \tfrac{1}{n} \big\}, \quad n \in \N_{\ge 1},
		\]
		is a fundamental system of $0$-neighborhoods in $\cE^{(\fM)}(\R)$, and the polars $U_n^\o$ form
		a fundamental system of bounded sets in $\cE^{(\fM)}(\R)'$ (cf.\ \cite[Lemma 25.5]{MeiseVogt}).
		If $T \in U_n^\circ$, then
		\[
		|T(f)|\le np_n(f), \quad f \in \cE^{(\fM)}(\R),
		\]
		and, by the Hahn--Banach theorem, $T$ extends to $\fE^{(M^{(1/n)})}(\R)$ and satisfies this estimate for all $f \in \fE^{(M^{(1/n)})}(\R)$.
		So $R^{-1}(U_n^\circ)$ is bounded in $\fE^{(M^{(1/n)})}(\R)'$, therefore $R^{-1}$ is continuous.

		By \Cref{thm:ultrafunctiondual}, $\fE^{(M^{(1/k)})}(\R)' \cong \cA_{\Ga_{M^{(1/k)}}}$ so that \Cref{prop:entireiso} yields the desired result.
	\end{proof}

	\subsection{The dual of  $\La^{(\fM)}$}

	\begin{theorem}
		\label{lem:sequencedual}
		Let $\fM$ be a weight matrix. Then $(\Lambda^{(\fM)})'\cong A_{\Om_\fM}$,
		and this isomorphism is realized by
		\[
		\widetilde{S}: (\Lambda^{(\fM)})' \rightarrow  A_{\Om_\fM},
		\quad T \mapsto \widetilde{S}(T):=\Big(z \mapsto \sum_{j \ge 0} T(e_j) z^j\Big),
		\]
		with $e_j$ denoting the $j$-th unit vector.
	\end{theorem}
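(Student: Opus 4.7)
The plan is to construct a candidate inverse $\Phi: \cA_{\Om_\fM} \to (\La^{(\fM)})'$ via Cauchy's formula, verify that $\widetilde{S}$ and $\Phi$ are mutual inverses using the density of the linear span of the unit vectors, and then upgrade this continuous bijection to a topological isomorphism via an open mapping theorem.

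First I would check that $\widetilde{S}$ is well-defined and takes values in $\cA_{\Om_\fM}$. If $T \in (\La^{(\fM)})'$, continuity gives constants $k,l,C > 0$ with $|T(\la)| \le C\|\la\|^{M^{(1/k)}}_{1/l}$; evaluating at the unit vector $e_j$ yields $|T(e_j)| \le C l^j / M^{(1/k)}_j$. Summing the resulting power series and applying \eqref{eq:equiv}, I obtain
\[
|\widetilde{S}(T)(z)| \le C\la_{M^{(1/k)}}(l|z|) \le 2Ce^{\om^{(k)}(2l|z|)}.
\]
Choosing $m \ge \max\{k,2l\}$ and using $\om^{(k)} \le \om^{(m)}$ together with the monotonicity of $\om^{(m)}$, this bounds $\widetilde{S}(T)$ by $2Ce^{\om^{(m)}(m|z|)}$, placing it in the brick $A_{g_m}$ of $\cA_{\Om_\fM}$.

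Next I construct $\Phi$ on each brick $A_{g_k}$. For $f = \sum_j a_j z^j \in A_{g_k}$, Cauchy's estimate combined with \eqref{assoweight} (in the form $\inf_t e^{\om^{(k)}(t)}/t^j = 1/M^{(1/k)}_j$) gives $|a_j| \le k^j \|f\|_{A_{g_k}}/M^{(1/k)}_j$. Setting $T_f(\la) := \sum_j a_j \la_j$ and bounding $|\la_j| \le \|\la\|^{M^{(1/k)}}_{1/(2k)} (2k)^{-j} M^{(1/k)}_j$, the series converges absolutely and
\[
|T_f(\la)| \le 2\|f\|_{A_{g_k}}\,\|\la\|^{M^{(1/k)}}_{1/(2k)}.
\]
Since bounded subsets of $\La^{(\fM)}$ are bounded in every defining seminorm, $f \mapsto T_f$ is continuous from $A_{g_k}$ into $(\La^{(\fM)})'_b$. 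By the universal property of the inductive limit, I obtain a continuous linear map $\Phi: \cA_{\Om_\fM} \to (\La^{(\fM)})'_b$.

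A direct coefficient comparison gives $\widetilde{S} \circ \Phi = \id$ on $\cA_{\Om_\fM}$, while $\Phi \circ \widetilde{S}(T)$ and $T$ agree on every $e_j$; since the partial sums $\sum_{j\le N} \la_j e_j$ converge to $\la$ in $\La^{(\fM)}$ (the defining seminorms force $|\la_j| l^j/M^{(1/k)}_j \to 0$ for every $k,l$, by applying a slightly stronger seminorm), the linear span of $\{e_j\}$ is dense and $\Phi \circ \widetilde{S} = \id$. Hence $\widetilde{S}$ is a bijection with continuous inverse $\Phi$. The only genuine obstacle is to upgrade this to a topological isomorphism: for this I invoke de Wilde's open mapping theorem, using that $\cA_{\Om_\fM}$ is ultrabornological and webbed by \Cref{cor:dualproperties} and that $(\La^{(\fM)})'_b$ is webbed as the strong dual of a Fréchet space. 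This forces $\Phi$ to be open and hence $\widetilde{S} = \Phi^{-1}$ to be continuous.
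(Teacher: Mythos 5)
Your argument follows essentially the same route as the paper's: bijectivity of $\widetilde{S}$ via the power-series/Cauchy-estimate correspondence together with \eqref{eq:equiv} and \eqref{assoweight}, continuity of the inverse by checking it on each step of the inductive limit against the polars of bounded sets, and an open-mapping/closed-graph argument for the continuity of $\widetilde{S}$ itself. All of these steps except the last are carried out correctly.

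The last step, however, does not go through as written. De Wilde's open mapping theorem (cf.\ \cite[Satz 24.30]{MeiseVogt}) makes a continuous linear surjection open when its \emph{domain} is webbed and its \emph{codomain} is ultrabornological. For $\Phi:\cA_{\Om_\fM}\to(\La^{(\fM)})'_b$ this requires $(\La^{(\fM)})'_b$ to be ultrabornological. The properties you invoke --- ultrabornologicity and webbedness of $\cA_{\Om_\fM}$, and webbedness of $(\La^{(\fM)})'_b$ --- are exactly the hypotheses of the closed graph theorem for the map $\Phi$ (or, equivalently, of the open mapping theorem for $\widetilde{S}$, whose continuity is what you are trying to prove); they only re-establish the continuity of $\Phi$, which you already have, and do not yield its openness. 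Note also that ultrabornologicity of the strong dual is \emph{not} automatic for duals of Fr\'echet spaces, so this cannot be waved through. The gap is easily closed, and this is precisely what the paper does: $\La^{(\fM)}$ is nuclear by the Grothendieck--Pietsch criterion, hence a Fr\'echet--Schwartz space, so $(\La^{(\fM)})'_b$ is a (DFS)-space and in particular ultrabornological (cf.\ \cite[Satz 24.23]{MeiseVogt}); with this in hand your application of the open mapping theorem to $\Phi$ is legitimate.
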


	Since $z \mapsto iz$ is an automorphism of $\C$,
	the map $S(T):= (z \mapsto \sum_{j \ge 0} T(e_j) i^j z^j)$ realizes the isomorphism $(\Lambda^{(\fM)})'\cong A_{\Om_\fM}$, too.

	\begin{proof}
		First observe that for any sequence $b=(b_j)_j$ satisfying
		\begin{equation}
			\label{eq:est1}
			\exists A,B,k>0 \A j\in\N :\quad |b_j| \le A B^j \frac{1}{M_j^{(1/k)}},
		\end{equation}
		the map
		\begin{equation}
			\label{eq:est2}
			T_{b}(a):= \sum_{j \ge 0}a_jb_j,\quad a=(a_j)_j\in\Lambda^{(\fM)},
		\end{equation}
		is an element of $(\Lambda^{(\fM)})'$.
		Actually,
		every $T\in (\Lambda^{(\fM)})'$ has the form \eqref{eq:est2}.
		Indeed,
		\[
		T(a_1 e_1 + \cdots + a_ne_n)=a_1 T(e_1) + \cdots + a_nT(e_n)
		\]
		and since $a_1 e_1 + \cdots + a_ne_n \to a$ in $\La^{(\fM)}$ as $n \to \infty$,
		the statement follows with $b_j = T(e_j)$.

		If $b$ satisfies \eqref{eq:est1}, then
		\[
		f_{b}(z):= \widetilde S(T_b)(z) =  \sum_{j \ge 0}b_jz^j
		\]
		defines an element in $A_{\Om_\fM}$. Indeed,
		\begin{align*}
			|f_{b}(z)| &\le A\sum_{j \ge 0}  \frac{(B |z|)^j}{M_j^{(1/k)}}
			\le A \sup_{k\in\N }\frac{(2B|z|)^k}{M_k^{(1/k)}}\sum_{j \ge 0}\frac{1}{2^j}
			=2Ae^{\om^{(k)}(2Bz)}.
		\end{align*}
		Conversely, if $f \in A_{\Om_\fM}$, then, by the Cauchy estimates and \eqref{assoweight},
		\begin{align*}
			\frac{|f^{(j)}(0)|}{j!}&\le A  \inf_{r > 0} \frac{e^{\om^{(k)}(kr)}}{r^j}
			=Ak^j \frac{1}{M_j^{(1/k)}}.
		\end{align*}
		So $\widetilde{S}: (\Lambda^{(\fM)})' \rightarrow  A_{\Om_\fM}$ is a linear isomorphism.

		Next we show continuity of $\widetilde{S}^{-1}$.
		To this end, it is enough to show that $\widetilde{S}^{-1}\vert_{A_k}$ is continuous,
		where $A_{k}:=A_{\om^{(k)}(kz)}$.
		A typical $0$-neighborhood in $(\Lambda^{(\fM)})'$ is of the form $U = \{T : T(C)\le r \}$
		for some bounded set $C \subseteq \Lambda^{(\fM)}$ and $r>0$.
		Let $D>0$ be such that $|a_j|\le D \frac{1}{(2k)^j}M_j^{(\frac{1}{k})}$ for all $j\in\N$ and all $a=(a_j)_j\in C$.
		Then $\widetilde{S}^{-1}$ maps the $A_k$-ball of radius $\frac{r}{2D}$ into $U$.

		For the continuity of $\widetilde{S}$ we observe that $(\Lambda^{(\fM)})'$ is ultrabornological, since $\Lambda^{(\fM)}$ is a Fr\'echet--Schwartz space (cf.\ \cite[Satz 24.23]{MeiseVogt}).
		Indeed, the Fr\'echet space $\Lambda^{(\fM)}$ is nuclear (by the Grothendieck-Pietsch criterion, cf.\ \cite[28.15]{MeiseVogt})
		and hence a Schwartz space (cf.\ \cite[Corollary 28.5]{MeiseVogt}).
		On the other hand, $A_{\Om_\fM}$ is webbed.
		So the assertion follows from the open mapping theorem (cf.\ \cite[Satz 24.30]{MeiseVogt}).
	\end{proof}

	\section{Proof by dualization}
	\label{sec:dualization}

	This section builds on
	the techniques developed in \cite{BonetMeiseTaylorSurjectivity} for Braun--Meise--Taylor classes.
	Let us first introduce some notation. For a (normalized) non-quasianalytic pre-weight function $\om$, we set
	\begin{align} \label{eq:Pom}
		\begin{split}
			P_\om(x+iy)&:=
			\frac{|y|}{\pi} \int_{-\infty}^\infty \frac{\om(t)}{(t-x)^2+y^2}\,dt,  \quad x,y \in \R,~ y \ne 0
			\\
			P_\om(x)&:= \om(x), \quad x \in \R,
		\end{split}
	\end{align}
	the \emph{harmonic extension} of $\om$ to the open upper and lower half plane (and subharmonic extension to $\C$).
	A detailed exposition of its main features is presented in \Cref{sec:pomega}. $P_\om$ is closely related to
	the concave weight function (cf.\ \cite[Definition 3.1$(b)$]{BonetMeiseTaylorSurjectivity})
	\begin{equation}\label{heir}
		\ka_\om(r):=\int_1^{\infty}\frac{\omega(rt)}{t^2}dt=r\int_r^{\infty}\frac{\omega(t)}{t^2}dt.
	\end{equation}
	In fact,
	\begin{equation}\label{heirswitch}
		\frac{1}{\pi} \ka_\om(r) \le P_{\om}(ir) \le \frac{4}{\pi}\ka_\om(r), \quad r>0,
	\end{equation}
	by \cite[Lemma 3.3]{BonetMeiseTaylorSurjectivity}.
	It was proved in \cite{BonetMeiseTaylorSurjectivity} that, for a non-quasianalytic weight function $\om$ and another weight function $\si$,
	the inclusion
	\[
	\Lambda^{(\si)} \subseteq j^\infty_0\cE^{(\om)}(\R),
	\]
	holds if and only if
	\begin{equation}\label{BMT}
		\ka_\om(r)= O(\si(r))\quad \text{ as }r \rightarrow \infty.
	\end{equation}
	Note that \eqref{BMT} is also equivalent to $\Lambda^{\{\si\}} \subseteq j^\infty_0\cE^{\{\om\}}(\R)$;
	cf.\ \Cref{weighfunction2}, \cite{BonetMeiseTaylorSurjectivity}, and \cite{RoumieuMoment}
	as well as
	\cite{whitneyextensionmixedweightfunction}, \cite{whitneyextensionmixedweightfunctionII},
	and \cite{Rainer:2020ab} for the more general mixed Whitney extension problem.

	When $M$ is a non-quasianalytic weight sequence, we also write $P_M$ instead of $P_{\om_M}$ and $\kappa_M$ instead of $\kappa_{\om_M}$.
	The crucial mixed condition for weight sequences $M, N$ in this section is $M \prec_{L} N$ defined
	by
	\begin{equation}
		\label{eq:L}
		\E C >0 \A s \ge 0:~ P_{N}(is) \le \om_M(Cs)+C.
	\end{equation}
	This condition appears in \cite[(2.14')]{Langenbruch94}.

	Let us now formulate the main result of this section.

	\begin{theorem}
		\label{thm:mainthm2}
		Let $\fM,\fN$ be weight matrices, $\fN$ derivation closed. Then
		\begin{equation} \label{L}
			\tag{L}
			\La^{(\fM)} \subseteq j^\infty_0 \cE^{(\fN)}(\R) \quad \Longleftrightarrow \quad \A y>0 \E x>0 : ~ M^{(x)} \prec_{L} N^{(y)}.
		\end{equation}
	\end{theorem}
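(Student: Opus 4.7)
The plan is to translate the inclusion \eqref{L} into a dual condition via the abstract functional-analytic result \cite[Corollary 2.3]{BonetMeiseTaylorSurjectivity} (restated in \Cref{prop:inclusion}), applied to the Borel map $j^\infty_0 : \cE^{(\fN)}(\R) \to \La^{(\fN)}$ together with the continuous inclusion $\La^{(\fM)} \hookrightarrow \La^{(\fN)}$, which is legitimate since both sides of \eqref{L} force $\fM(\preccurlyeq)\fN$ and non-quasianalyticity of $\fN$ (by an argument analogous to \Cref{assumptionssuperfluous}). The dual identifications $\cE^{(\fN)}(\R)' \cong \cA_{\Om_\fN^+}$ from \Cref{thm:ultrafunctiondualmatrix} (here is where derivation closedness of $\fN$ enters) and $(\La^{(\fM)})' \cong \cA_{\Om_\fM}$ from \Cref{lem:sequencedual} then convert \eqref{L} into a comparison of bounded sets in weighted spaces of entire functions; under these identifications, and up to the linear change of variable coming from the Fourier transform, the transpose of $j^\infty_0$ is the natural inclusion of polynomial-type entire functions into Paley--Wiener-type ones. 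The regularity, reflexivity, and webbedness from \Cref{cor:dualproperties} are precisely what make \Cref{prop:inclusion} applicable.

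For the sufficiency direction $(\Leftarrow)$, fix $y>0$ and choose $x>0$ with $M^{(x)} \prec_L N^{(y)}$. Given an entire function $F$ bounded by $e^{\om_{N^{(y)}}(k|z|) + k|\Im z|}$, representing a generic functional on $\cE^{(\fN)}(\R)$, we must exhibit a holomorphic representation of its restriction to $\La^{(\fM)}$ at level $x$ in $\cA_{\Om_\fM}$. Following the strategy of \cite{BonetMeiseTaylorSurjectivity} (going back to \cite{Carleson61} and \cite{Ehrenpreis70}), one applies the Phragm\'en--Lindel\"of principle in each half-plane to replace the boundary datum $\om_{N^{(y)}}$ on $\R$ by its harmonic extension $P_{\om_{N^{(y)}}}$; by \eqref{eq:L} the resulting bound on the imaginary axis is dominated by $\om_{M^{(x)}}(Cs)+C$, which sits at level $x$ in $\cA_{\Om_\fM}$. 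H\"ormander's $L^2$-solution of the $\db$-equation with a plurisubharmonic weight built from $\om_{M^{(x)}}(C|z|)$ then corrects a holomorphic cutoff construction into a globally entire function with the desired growth. \Cref{lem:Apincl} mediates between the $L^\infty$-weighted spaces from duality and the $L^2$-weighted spaces required by H\"ormander, while \Cref{lem:dcconsequence} absorbs the logarithmic factors $\log\lambda_{N^{(y)}}$ appearing in $\Ga_\fN$ into $\Om_\fN^+$.

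For the necessity direction $(\Rightarrow)$, one tests the bounded-set condition coming from \Cref{prop:inclusion} on an explicit family of entire functions: suitable truncations of $z\mapsto e^{isz}$ parametrized by $s>0$, normalized to lie in the unit ball of the appropriate level in $\cA_{\Om_\fN^+}$. Applying the dual inclusion yields a bound of the same function in $\cA_{\Om_\fM}$ at some level $x=x(y)$; evaluating at $z = is$ and using the reconstruction \eqref{assoweight} together with the comparison \eqref{heirswitch} to recognize $P_{\om_{N^{(y)}}}(is)$ on the left-hand side, one reads off precisely $P_{N^{(y)}}(is) \le \om_{M^{(x)}}(Cs) + C$, i.e.\ \eqref{eq:L}. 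Alternatively, the contrapositive constructs directly a sequence in $\La^{(\fM)}\setminus j^\infty_0\cE^{(\fN)}(\R)$ from a failure of \eqref{eq:L} at some $y$, but the dualization route is more transparent in this framework.

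The main obstacle will be the sufficiency direction: consolidating the multiplicative constants and parameter shifts introduced by Phragm\'en--Lindel\"of, the $L^\infty\leftrightarrow L^2$ transition via \Cref{lem:Apincl}, and the $\db$-correction into a single admissible choice of $x$ compatible with the Beurling quantifier structure $\A y\, \E x$. The flexibility in \eqref{eq:L} (with $C$ appearing both additively and as a dilation inside $\om_{M^{(x)}}$) and the absorption property $(\fM_{(L)})$ supplied by \Cref{lem:ML} are what allow these consolidations to close, and matching them against the subtleties of the dual realization (in particular ensuring plurisubharmonicity of the weights used in H\"ormander) is the key technical point to be navigated.
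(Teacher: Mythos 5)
Your overall strategy coincides with the paper's: dualize via \Cref{prop:inclusion}, identify $\cE^{(\fN)}(\R)'\cong\cA_{\Om_\fN^+}$ and $(\La^{(\fM)})'\cong A_{\Om_\fM}$, and translate \eqref{L} into a statement about bounded sets of entire functions settled by Phragm\'en--Lindel\"of and a $\db$-construction. But there are two genuine gaps. First, \Cref{prop:inclusion} has a hypothesis beyond the Fr\'echet--Schwartz and dense-range conditions that your sketch never addresses: $F'=(\La^{(\fN)})'$ endowed with the \emph{initial topology} with respect to $(j^\infty_0)^t$ must be bornological. Verifying this is the bulk of the proof of \Cref{lem:toshow}: one must show that $\cA_{\Om_\fN}$ with the trace topology of $\cA_{\Om_\fN^+}$ is bornological, which reduces to the closure identity \eqref{eq:closure} and is proved using the Phragm\'en--Lindel\"of theorem together with Taylor's weighted polynomial approximation theorem (mediated by \Cref{lem:Apincl}, \Cref{lem:mixedw1}, and \Cref{lem:dcconsequence}). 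The regularity, reflexivity, and webbedness recorded in \Cref{cor:dualproperties} do not substitute for this; without it \Cref{prop:inclusion} simply does not apply.

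Second, your necessity argument would not close as described. Testing the bounded-set condition on truncations of $z\mapsto e^{isz}$ cannot produce $P_{N^{(y)}}(is)\le\om_{M^{(x)}}(Cs)+C$: such functions carry no information about the value of the harmonic extension of $\om_{N^{(y)}}$ at $is$. What is needed is a near-extremal family: polynomials $p_j$, uniformly bounded in a single step of $\cA_{\Om_\fN^+}$ (essentially by $e^{P^{(k)}(Dz)}$), taking the prescribed large values $p_j(ia_j)=e^{P^{(k_0)}(ia_j)}$ at points where \eqref{eq:L} is assumed to fail. This is \Cref{lem:extracted}, and it is \emph{here} --- not in the sufficiency direction --- that H\"ormander's $L^2$ solution of the $\db$-equation (following Meise--Taylor) is used. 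The sufficiency direction, by contrast, needs only \Cref{cor:phrag} together with the assumed inequality and requires no $\db$-correction. So the skeleton is right, but the two technically hardest ingredients --- the bornologicity of the initial topology and the extremal polynomials for necessity --- are respectively missing and misattributed.
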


	\begin{remark}\label{constantsinout}
		Similarly to \Cref{rem:strongom1},
		note that in \eqref{eq:L} on the right-hand side the constant $C$ appears in the argument of $\om_M$ (not in front).
		This subtle difference will become important later on; it stems from the fact
		that we aim for results for classes defined by (a family of) weight sequences instead of
		(associated) weight functions.
		Even though
		\eqref{heirswitch} implies $\ka_{N}\sim P_N$, generally we cannot replace $P_{N}$ by $\ka_{N}$ in \eqref{eq:L}.
	\end{remark}

	\subsection{Auxiliary results}

	We saw in \Cref{assumptionssuperfluous} that the left-hand side of \eqref{L} entails $\fM (\preceq) \fN$ and
	non-quasianalyticity of $\fN$. This is also true
	for the right-hand side.

	\begin{lemma}\label{weightfctnonquasisuperfluous}
		Let $\fM$ and $\fN$ be weight matrices. The right-hand side of \eqref{L} entails $\fM (\preceq) \fN$ and
		non-quasianalyticity of $\fN$.
	\end{lemma}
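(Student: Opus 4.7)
The plan is to fix $y>0$, invoke the hypothesis to find $x=x(y)>0$ with $M^{(x)}\prec_L N^{(y)}$, and extract both conclusions from the resulting inequality
\[
P_{N^{(y)}}(is)\le \om_{M^{(x)}}(Cs)+C,\quad s\ge 0.
\]
First I will observe that the right-hand side is finite for every $s\ge 0$, since the weight-sequence condition $(M^{(x)}_k)^{1/k}\to\infty$ ensures $\om_{M^{(x)}}$ is finite-valued on $[0,\infty)$. Consequently $P_{N^{(y)}}(is)<\infty$ for some (hence every) $s>0$, which is equivalent to $\int_0^\infty \om_{N^{(y)}}(t)/(1+t^2)\,dt<\infty$, and thus, by \cite[Lemma 4.1]{Komatsu73}, to non-quasianalyticity of $N^{(y)}$. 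Since $y$ was arbitrary, $\fN$ is non-quasianalytic.

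For $\fM(\preccurlyeq)\fN$ I plan to use the sharp inequality $\om_{N^{(y)}}(r)\le P_{N^{(y)}}(ir)$ for all $r\ge 0$. This is the subharmonic majorant (Phragm\'en--Lindel\"of) principle applied to the radial subharmonic function $z\mapsto \om_{N^{(y)}}(|z|)$ on the upper half plane: the map $\om_{N^{(y)}}(|\cdot|)$ is subharmonic on $\C$ because $\ph_{\om_{N^{(y)}}}$ is convex, it has the correct boundary values on $\R$, and the required growth control is supplied by $\om_{N^{(y)}}(t)=o(t)$ (which holds by the non-quasianalyticity just established). I expect this estimate to be part of the systematic development of $P_\om$ in \Cref{sec:pomega}. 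Combining it with the hypothesis and then \eqref{assoweight} yields
\[
N^{(y)}_k \;=\;\sup_{t>0}\frac{t^k}{e^{\om_{N^{(y)}}(t)}} \;\ge\; \sup_{t>0}\frac{t^k}{e^{\om_{M^{(x)}}(Ct)+C}}\;=\;e^{-C}C^{-k}M^{(x)}_k,
\]
i.e.\ $M^{(x)}\preccurlyeq N^{(y)}$. As $y$ was arbitrary, $\fM(\preccurlyeq)\fN$.

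The subtle point, flagged in \Cref{constantsinout}, is precisely the sharpness of $\om\le P_\om(i\cdot)$: a weaker bound such as $\om(r)\le 2\,P_\om(ir)$, which one obtains directly from the Poisson kernel via monotonicity of $\om$, would only deliver $\om_{N^{(y)}}(r)\le c\,\om_{M^{(x)}}(Cr)+c'$ with $c>1$, and a multiplicative factor in front of $\om_{M^{(x)}}$ does not translate into a sequence comparison through \eqref{assoweight}. Hence the main (and essentially only) obstacle is having the subharmonic-majorant estimate at hand with constant $1$ in front of $\om$; the remainder of the argument is then a two-line computation via the recovery formula \eqref{assoweight}.
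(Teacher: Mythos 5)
Your proposal is correct and follows essentially the same route as the paper: non-quasianalyticity of each $N^{(y)}$ is read off from the finiteness of $P_{N^{(y)}}(is)$ via \cite[Lemma 4.1]{Komatsu73}, and $\fM(\preccurlyeq)\fN$ is obtained by combining the sharp estimate $\om_{N^{(y)}}(s)\le P_{N^{(y)}}(is)$ (this is exactly \eqref{eq:est3}, established in \Cref{prop:Psubharmonic}) with the recovery formula \eqref{assoweight}, yielding the same inequality $N^{(y)}_k\ge e^{-C}C^{-k}M^{(x)}_k$. The only cosmetic difference is that you sketch \eqref{eq:est3} via a Phragm\'en--Lindel\"of/subharmonic-majorant argument, whereas the paper derives it from concavity of $\et\mapsto P_\om(e^{\xi+i\et})$ on the strip; both are valid, and your observation that the constant $1$ in front of $\om$ is what makes the sequence comparison go through is exactly the point of \Cref{constantsinout}.
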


	\begin{proof}
		Non-quasianalyticity is immediate, since
		$\om_{N^{(y)}}$ is non-quasianalytic if and only if so is $N^{(y)}$; see \cite[Lemma 4.1]{Komatsu73}.

		Fix $y>0$. There is $x>0$ such that $M^{(x)} \prec_{L} N^{(y)}$, i.e.,
		there is $C>0$ such that
		\[
		\om_{N^{(y)}}(s) \le P_{N^{(y)}}(is) \le  \om_{M^{(x)}}(Cs)+C, \quad s>0,
		\]
		where the first inequality will be justified later in \eqref{eq:est3}.
		By \eqref{assoweight}, we conclude
		\begin{align*}
			N^{(y)}_k &= \sup_{t>0} \frac{t^k}{e^{\om_{N^{(y)}}(t)}}
			\ge \sup_{t>0} \frac{t^k}{e^{\om_{M^{(x)}}(Ct)+C}}= e^{-C} C^{-k}M^{(x)}_k.
		\end{align*}
		This shows $\fM (\preceq) \fN$.
	\end{proof}

	\subsection{Properties of $P_{\om}$, the (sub-)harmonic extension of $\om$ }
	\label{sec:pomega}

	In this section we assume, without further mentioning, that $\om$
	has the following properties:
	\begin{itemize}
		\item $\om:[0,\infty)\rightarrow [0,\infty)$ is increasing and continuous.
		\item $\log(t)=O(\om(t))$ as $t \rightarrow \infty$.
		\item $\ph_\om = \om \o \exp$ is convex.
		\item $\int_0^\infty \frac{\om(t)}{1+t^2}\, dt < \infty$.
	\end{itemize}
	So $\om$ may be any non-quasianalytic pre-weight function, in particular, $\om_M$ for a non-quasianalytic weight sequence $M$.
	Recall our convention $\om(z):= \om(|z|)$ for $z \in \C$.

	The harmonic extension
	$P_{\om}$, defined in \eqref{eq:Pom},
	will play a crucial role as a weight for a weighted space of entire functions.
	Let us list some obvious properties:
	\begin{enumerate}[label=\thetag{\arabic*}]
		\item $P_\om(z)\ge 0$ for all $z \in \C$,
		\item $P_{\om}$ is symmetric relative to the real and imaginary axis,
		\item $\si \le \om$ implies $P_\si \le P_\om$,
		\item $P_{\si+\om}=P_\si+P_\om$,
		\item $P_{t\mapsto \om(nt)}(z)=P_\om(nz)$.
	\end{enumerate}

	\begin{remark}
		\label{rem:generalweight}
		For an increasing sequence of functions $\om_j$ converging to $\om$
		uniformly on compact subsets of $\R$, we get directly from the definition that $P_{\om_j} \rightarrow P_\om$ uniformly on compact subsets of $\C$.
	\end{remark}

	The following proposition is well-known.

	\begin{proposition}
		\label{prop:Psubharmonic}
		$P_{\om}$ is continuous on $\C$, harmonic in the open upper and lower half plane, and subharmonic on $\C$.
	\end{proposition}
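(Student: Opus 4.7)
The plan is to verify the three assertions — harmonicity off $\R$, continuity on $\C$, and subharmonicity on $\C$ — in turn. Throughout, I treat $\om$ as extended evenly to $\R$ by $\om(t) := \om(|t|)$ so that the integrand of \eqref{eq:Pom} is well defined on all of $\R$.

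For harmonicity I note that the Poisson kernel $\mathbb{P}_y(x-t) := |y|/(\pi((t-x)^2+y^2))$ is harmonic in $(x,y)$ for $y \ne 0$ (direct differentiation, or because it is, up to sign, the imaginary part of $1/(\pi i(z-t))$). The non-quasianalyticity hypothesis $\int \om(t)/(1+t^2)\,dt < \infty$, together with the monotonicity of $\om$, dominates the integrand and its partial derivatives in $(x,y)$ uniformly on compact subsets of $\C \setminus \R$, so I may differentiate under the integral to conclude $\De P_\om = 0$ off $\R$. Continuity off $\R$ is immediate; continuity at a boundary point $x_0 \in \R$ follows from the classical boundary-limit theorem for Poisson integrals: I split $\om = \om \mathbf{1}_{[-R,R]} + \om \mathbf{1}_{\{|t|>R\}}$ for $R > |x_0|+1$, use $\int_\R \mathbb{P}_y(x-t)\,dt = 1$ together with continuity of $\om$ to handle the bounded part, and control the tail by $C |y| \int_{|t|>R} \om(t)/t^2\,dt$, which vanishes as $y \to 0$.

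For subharmonicity on $\C$, since $P_\om$ is continuous on $\C$ and harmonic off $\R$, only the sub-mean-value inequality at points $x_0 \in \R$ needs attention. The key input is that the convexity of $\ph_\om = \om \o \exp$ makes the radial extension $z \mapsto \om(|z|)$ subharmonic on $\C$: formally $\De \om(|z|) = \om''(r)+\om'(r)/r$, which after the substitution $r=e^t$ becomes a non-negative multiple of $\ph_\om''$ (non-smooth $\om$ is handled by mollification and a limiting argument). A Phragmén--Lindelöf argument then produces the comparison $\om(|z|) \le P_\om(z)$ on $\C$: the function $v(z) := \om(|z|) - P_\om(z)$ is subharmonic in each open half plane, vanishes continuously on $\R$, and since $P_\om \ge 0$ we have $v(z) \le \om(|z|) = o(|z|)$ as $|z|\to \infty$ (the growth $\om(t)=o(t)$ being a standard consequence of non-quasianalyticity), which suffices for Phragmén--Lindelöf to yield $v\le 0$ throughout. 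Combining the sub-mean-value inequality for $\om(|\cdot|)$ at $x_0$ with this comparison,
\[
\om(x_0) = \om(|x_0|) \le \frac{1}{\pi \rho^2}\int_{D(x_0,\rho)} \om(|z|)\,d\la(z) \le \frac{1}{\pi \rho^2}\int_{D(x_0,\rho)} P_\om(z)\,d\la(z),
\]
establishes the sub-mean-value inequality for $P_\om$ at $x_0$ and hence subharmonicity on $\C$.

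The main obstacle is the comparison $\om(|z|) \le P_\om(z)$; the harmonicity and continuity steps are routine. The delicacy is that the maximum principle must be applied on unbounded domains (the half planes), so one really needs the growth control $\om(t)=o(t)$ supplied by non-quasianalyticity to run Phragmén--Lindelöf — a feature that becomes moot if one only wants subharmonicity on a bounded region, but is essential here.
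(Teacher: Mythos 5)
Your proof is correct, and it reaches the same overall structure as the paper's (continuity and harmonicity off $\R$ are routine; subharmonicity at real points is reduced, via the sub-mean-value inequality for the radial subharmonic function $z\mapsto\om(|z|)$, to the comparison $\om(|z|)\le P_\om(z)$, which is exactly \eqref{eq:est3}). The one genuinely different ingredient is how you prove that comparison. The paper passes to logarithmic coordinates: $\xi+i\et\mapsto P_\om(e^{\xi+i\et})$ is harmonic on the strip $\{0<\et<\pi\}$ and convex in $\xi$, hence (by the Laplace equation) concave in $\et$, and symmetric about $\et=\pi/2$; a concave symmetric function on $[0,\pi]$ attains its minimum at the endpoints, where $P_\om$ equals $\om$. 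You instead apply the Phragm\'en--Lindel\"of maximum principle to $v=\om(|z|)-P_\om(z)$ on each open half plane, using $v|_\R=0$, subharmonicity of $\om(|z|)$ (from convexity of $\ph_\om$, which both arguments need), and the growth bound $v\le\om(|z|)=o(|z|)$ supplied by non-quasianalyticity. Both are sound; the paper's strip argument avoids any maximum principle on an unbounded domain and does not invoke $\om(t)=o(t)$ at this point, while your route is closer to the standard toolbox and makes the role of the sublinear growth of $\om$ explicit. Your boundary-continuity argument (splitting $\om$ into a compactly supported part, handled by the classical Poisson boundary-limit theorem, and a tail controlled by $|y|\int_{|t|>R}\om(t)t^{-2}\,dt$) is a correct elaboration of what the paper dismisses as clear.
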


	\begin{proof}
		That $P_\om$ is continuous on $\C$ and harmonic in the open upper and lower half plane is clear.
		For the subharmonicity, note first that $\om$ is subharmonic on $\C$; indeed, $\om(z) = \vh_\om(\log|z|)$ and $\vh_\om$ is increasing and convex.

		Next we show that
		\begin{equation}
			\label{eq:est3}
			P_{\om}(z) \ge \om(z),\quad z\in\C.
		\end{equation}
		In fact, $\xi +i\et \mapsto P_{\om}(e^{\xi+i\et})$ is harmonic on the horizontal strip $\{0< \et <\pi\}$, convex in $\xi$, and thus concave and symmetric relative to $\frac{\pi}2$
		in $\et$
		(cf.\ the arguments in \cite[p. 198]{Carleson61}).
		So for any fixed $\xi$ the map $\et \mapsto P_{\om}(e^{\xi+i\et})$ takes its minimum at $\et = 0$ (and $\et=\pi$). Since $P_{\om}$ extends $\om$,
		this proves \eqref{eq:est3}.

		Now, for
		$x \in \R$ and $\de>0$,
		\[
		P_{\om}(x)=\om(x)\le \frac{1}{2\pi} \int_0^{2\pi} \om(x+\de e^{i\th})\,d\th \le \frac{1}{2\pi} \int_0^{2\pi} P_{\om}(x+\de e^{i\th})\,d\th,
		\]
		which implies that $P_\om$ is subharmonic on $\C$.
	\end{proof}

	\subsection{Consequences of properties of weight sequences for $P_{M}$}

	If $M,N$ are weight sequences such that $M \prec_{s\om_1} N$, then one easily infers the existence of $K\ge 1$ such that
	\begin{equation}
		\label{eq:om1consequence}
		\om_M(z+w) \le \om_N(z) + \om_N(w) + K, \quad z,w \in \C.
	\end{equation}
	Moreover, if $M,N$ are non-quasianalytic, there is a constant $C\ge 1$ such that
	\begin{equation}\label{eq:om1consequencePom}
		P_{M}(z)\le P_{N}(z)+C, \quad z \in \C.
	\end{equation}

	\begin{lemma}
		\label{lem:w1}
		Let $M$ and $N$ be weight sequences such that $M$ is non-quasianalytic and $M \prec_{s\om_1} N$.
		Then for all $\varepsilon > 0$ there exists $K>0$ such that
		\[
		P_{M}(x+iy) \le \om_N(x) + \varepsilon y + K, \quad x+iy \in \C.
		\]
	\end{lemma}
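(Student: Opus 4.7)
The plan is to split the Poisson integral defining $P_M(x+iy)$ into a part that depends on $x$ (absorbed into $\om_N(x)$) and a part depending only on $y$ that is sublinear in $|y|$. Three ingredients drive the estimate: subadditivity of $\om_M$ up to a factor $2$ (from monotonicity and positivity), the hypothesis $M\prec_{s\om_1}N$ which converts $\om_M(2t)$ into $\om_N(t)+O(1)$, and non-quasianalyticity of $M$, which forces $P_M(iy)=o(|y|)$ as $|y|\to\infty$. Since $P_M$ is symmetric under $y\mapsto -y$, the statement reduces to $y\ge 0$, and I will read $\varepsilon y$ as $\varepsilon|y|$ throughout.

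First, I would substitute $s=t-x$ in the definition of $P_M(x+iy)$ to recenter the Poisson kernel at $0$. Using $|x+s|\le 2\max(|x|,|s|)$ and monotonicity of $\om_M$, one obtains the pointwise bound $\om_M(|x+s|)\le \om_M(2|x|)+\om_M(2|s|)$. Splitting the integral, the first summand contributes exactly $\om_M(2|x|)$ (the Poisson kernel has total mass $1$), which by $M\prec_{s\om_1}N$ is at most $\om_N(|x|)+C_1$. After the change of variable $u=2s$, the second summand is precisely $P_M(2iy)$, so
\[
P_M(x+iy)\le \om_N(|x|)+C_1+P_M(2iy).
\]

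It remains to show that for every $\varepsilon>0$ there is $K'=K'(\varepsilon)>0$ with $P_M(2iy)\le \varepsilon|y|+K'$ for all $y\in\R$. Writing
\[
\frac{P_M(2iy)}{|y|}=\frac{2}{\pi}\int_{-\infty}^\infty\frac{\om_M(|u|)}{u^2+4y^2}\,du,
\]
the integrand is, for $|y|\ge \tfrac12$, dominated by $\om_M(|u|)/(1+u^2)$, which is integrable because $M$ is non-quasianalytic, and tends to $0$ pointwise as $|y|\to\infty$. Dominated convergence yields $P_M(2iy)=o(|y|)$, so $P_M(2iy)\le \varepsilon|y|$ once $|y|\ge Y(\varepsilon)$. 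On the compact set $|y|\le Y(\varepsilon)$, continuity of $P_M$ on $\C$ (Proposition \ref{prop:Psubharmonic}) provides a uniform bound. Combining everything with $K:=C_1+K'$ finishes the proof.

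The main obstacle is this last sublinearity step: one must confirm that $\int \om_M(t)/(1+t^2)\,dt<\infty$ alone suffices to run the dominated convergence argument (nothing more refined about $\om_M$ is available). The rest is a mechanical application of translation of the Poisson integral, the trivial subadditivity $\om_M(|x+s|)\le \om_M(2|x|)+\om_M(2|s|)$, and the $s\om_1$ hypothesis.
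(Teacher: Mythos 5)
Your proof is correct, and it is essentially the argument the paper is pointing to: the paper's proof is only the citation ``\cite[Lemma 2.2]{BraunMeiseTaylor90} with the obvious changes'', and your recentred Poisson-integral splitting (mass-one kernel absorbing $\om_M(2|x|)\le\om_N(|x|)+C_1$, plus $P_M(2iy)=o(|y|)$ via dominated convergence from $\int_0^\infty\om_M(t)(1+t^2)^{-1}\,dt<\infty$) is precisely that proof adapted to the mixed hypothesis $M\prec_{s\om_1}N$. Your reading of $\varepsilon y$ as $\varepsilon|y|$ is also consistent with how the lemma is invoked later in the paper (e.g.\ in \eqref{eq:estimate}).
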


	\begin{proof}
		Cf.\  \cite[Lemma 2.2]{BraunMeiseTaylor90} with the obvious changes.
	\end{proof}

	Having this we prove the following mixed version of \cite[Lemma 1.9]{MeiseTaylor88}.

	\begin{lemma}
		\label{lem:mixedw1}
		Let $M^{(i)}$, $1\le i \le 3$, be non-quasianalytic weight sequences with $M^{(1)} \prec_{s\om_1} M^{(2)} \prec_{s\om_1} M^{(3)}$.
		Then there exists $A > 0$ such that
		\[
		P_{M^{(1)}}(z+w) \le P_{M^{(3)}}(z) + A, \quad z,w \in \C,~ |w|\le 1.
		\]
	\end{lemma}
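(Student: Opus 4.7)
My plan is to split into two regimes according to the size of $|\Im z|$, handling one by a direct appeal to \Cref{lem:w1} and the other by an explicit manipulation of the Poisson integral representation of $P$.

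For the regime $|\Im z|\le 2$, I would apply \Cref{lem:w1} to the pair $M^{(1)}\prec_{s\om_1} M^{(2)}$ with $\varepsilon=1$, which together with the symmetry $P_{\om}(x+iy)=P_{\om}(x-iy)$ yields $P_{M^{(1)}}(z+w)\le \om_{M^{(2)}}(|\Re(z+w)|)+|\Im(z+w)|+K$. Since $|w|\le 1$, one has $|\Im(z+w)|\le 3$ and $|\Re(z+w)|\le|\Re z|+1$, and \eqref{eq:om1consequence} applied to $M^{(2)}\prec_{s\om_1} M^{(3)}$ replaces $\om_{M^{(2)}}(|\Re z|+1)$ by $\om_{M^{(3)}}(|\Re z|)+C$. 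The lower bound $P_{M^{(3)}}(z)\ge\om_{M^{(3)}}(|z|)\ge\om_{M^{(3)}}(|\Re z|)$ from \eqref{eq:est3} then closes this case.

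For the regime $|\Im z|>2$, by the symmetry of $P_{\om}$ in $\Im z$ I would assume $\Im z=y>2$ and write $w=u+iv$ with $|u|,|v|\le 1$, so that $y+v\ge 1$ and the imaginary part does not change sign. I shift in the real and imaginary directions separately. First, the substitution $t\mapsto t-u$ in the Poisson integral for $P_{M^{(1)}}(z+w)$ brings the real shift into the argument of $\om_{M^{(1)}}$; using $|t-u|\le|t|+1$ together with \eqref{eq:om1consequence} applied to $M^{(1)}\prec_{s\om_1} M^{(2)}$ I obtain $P_{M^{(1)}}(z+w)\le P_{M^{(2)}}(\Re z+i(y+v))+C_1$. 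Next, I rescale via $t=\Re z+(y+v)s$ to reach the universal form
\[
P_{M^{(2)}}(\Re z+i(y+v))=\frac{1}{\pi}\int_\R\frac{\om_{M^{(2)}}(|\Re z+(y+v)s|)}{s^2+1}\,ds.
\]
The triangle inequality $|\Re z+(y+v)s|\le|\Re z+ys|+|s|$ together with \eqref{eq:om1consequence} for $M^{(2)}\prec_{s\om_1} M^{(3)}$ splits the integrand into $\om_{M^{(3)}}(|\Re z+ys|)$, $\om_{M^{(3)}}(|s|)$, and an additive constant. The reverse substitution $t=\Re z+ys$ turns the first piece into $P_{M^{(3)}}(z)$; the second piece is uniformly bounded by $\frac{1}{\pi}\int_\R\om_{M^{(3)}}(|s|)/(s^2+1)\,ds$, which is finite by non-quasianalyticity of $M^{(3)}$; and the third integrates to a constant.

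The main obstacle I expect is exactly this imaginary shift: a naive Harnack-type comparison between $P_{M^{(2)}}$ at $\Re z+i(y+v)$ and at $\Re z+iy$ only yields a multiplicative constant, which is too weak for the desired additive conclusion. The rescaling trick is essential because it trades the imaginary shift for a small dilation inside $\om_{M^{(2)}}$, which the \emph{additive} $s\om_1$ chain from $M^{(2)}$ to $M^{(3)}$ absorbs as a constant, with non-quasianalyticity of $M^{(3)}$ ensuring that the resulting error integral is finite.
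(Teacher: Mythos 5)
Your proposal is correct and follows essentially the same route as the paper: split according to the size of $|\Im z|$, handle the near-real regime via \Cref{lem:w1} together with \eqref{eq:om1consequence} and \eqref{eq:est3}, and in the far regime use the integral representation \eqref{eq:alternativeform} plus the near-subadditivity \eqref{eq:om1consequence} from the $s\om_1$ chain, with non-quasianalyticity guaranteeing the error integral is finite. The only cosmetic difference is that the paper absorbs the real and imaginary shifts in a single application of \eqref{eq:om1consequence} to the decomposition $(y+v)t+x+u=(yt+x)+(vt+u)$ and then invokes $P_{M^{(2)}}\le P_{M^{(3)}}+C$, whereas you treat the real shift and the dilation in two successive steps using the two links of the chain separately.
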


	\begin{proof}
		First observe that $P_{M}$ has the following alternative form
		\begin{equation} \label{eq:alternativeform}
			P_{M}(x+iy)=\frac{1}{\pi}\int_{-\infty}^{\infty} \frac{\om_M(|y|t+x)}{t^2+1}\,dt, \quad (y \ne 0).
		\end{equation}
		Now take $w = u+iv \in \C$ with $|w|\le 1$
		and $z = x+iy \in \C$ with $y>1$. Then $\Im(z+w)= y+v>0$ and, by \eqref{eq:om1consequence},
		\begin{align*}
			P_{M^{(1)}}(z+w) &= \frac{1}{\pi}\int_{-\infty}^{\infty} \frac{\om_{M^{(1)}}((y+v)t + x+u)}{t^2+1}\,dt\\
			&\le \frac{1}{\pi}\int_{-\infty}^{\infty} \frac{\om_{M^{(2)}}(yt+ x) + \om_{M^{(2)}}(vt+ u)+K}{t^2+1}\,dt\\
			&\le P_{M^{(2)}}(z) + K\underbrace{\frac{1}{\pi}\int_{-\infty}^\infty \frac{\om_{M^{(2)}}(|t|+1)+1}{t^2+1}\,dt}_{=:B>1},
		\end{align*}
		since $K\ge 1$ and $\om_{M^{(2)}}(vt+ u)=\om_{M^{(2)}}(|vt+u|)\le\om_{M^{(2)}}(|v||t|+|u|)\le\om_{M^{(2)}}(|t|+1)$.
		By \eqref{eq:om1consequencePom}, the choice $A=BK+C'$ establishes the claim for $y>1$, and by symmetry for $y<-1$.
		If $|y|\le 1$, then \Cref{lem:w1} and \eqref{eq:om1consequence} yield constants $K_i\ge 1$ such that
		\[
		P_{M^{(1)}}(z+w)\le \om_{M^{(2)}}(x+u) + K_1 \le \om_{M^{(3)}}(x) + K_2,
		\]
		and \eqref{eq:est3} finishes the proof.
	\end{proof}

	The effect of derivation closedness on $P_M$ is captured in the next lemma.

	\begin{lemma}\label{dcforPM}
		Let $l \in \N$ and let $M^{(k)}$, for $1\le k \le l+1$, be weight sequences such that $dc(M^{(k)},M^{(k+1)})<\infty$ for all $k$.
		Then there exists $C >0$ such that
		\[
		P_{M^{(l+1)}}(z)+\log(1+|z|^l)\le P_{M^{(1)}}(Cz)+C, \quad z \in \C.
		\]
	\end{lemma}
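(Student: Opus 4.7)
The plan is to lift the scalar inequality provided by \Cref{lem:dcconsequence} to the Poisson envelope via the integral representation \eqref{eq:alternativeform} (equivalently \eqref{eq:Pom}). Concretely, \Cref{lem:dcconsequence} yields a constant $C>0$ such that
\[
\om_{M^{(l+1)}}(|t|)+\log(1+|t|^l)\le \om_{M^{(1)}}(C|t|)+C, \quad t \in \R.
\]
This inequality will be integrated against the Poisson kernel $\frac{|y|/\pi}{(t-x)^2+y^2}$; the ingredients we need are additivity $P_{\si+\om}=P_\si+P_\om$, the scaling rule $P_{\om(C\,\cdot\,)}(z)=P_\om(Cz)$, and the fact that $P$ applied to a constant returns that constant (the Poisson kernel integrates to $1$).

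The only nontrivial ingredient is that the Poisson extension of $t\mapsto\log(1+|t|^l)$ dominates $\log(1+|z|^l)$ on all of $\C$. For this, the main observation is that the function $\vartheta(t):=\log(1+t^l)$ verifies the four standing assumptions of \Cref{sec:pomega}: it is increasing, continuous, satisfies $\log(t)=O(\vartheta(t))$, its composition with $\exp$ is convex (since $y\mapsto\log(1+e^{ly})$ is convex), and $\int_0^\infty\vartheta(t)/(1+t^2)\,dt<\infty$. Hence the proof of \Cref{prop:Psubharmonic} applies verbatim to $\vartheta$, and in particular the analogue of \eqref{eq:est3} gives
\[
P_{\vartheta}(z)\ge\vartheta(|z|)=\log(1+|z|^l), \quad z\in\C.
\]

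Putting the pieces together we obtain, for every $z\in\C$,
\begin{align*}
P_{M^{(l+1)}}(z)+\log(1+|z|^l)
&\le P_{M^{(l+1)}}(z)+P_{\vartheta}(z)
= P_{t\mapsto \om_{M^{(l+1)}}(|t|)+\log(1+|t|^l)}(z) \\
&\le P_{t\mapsto \om_{M^{(1)}}(C|t|)+C}(z)
= P_{M^{(1)}}(Cz)+C,
\end{align*}
where the equality in the first line uses additivity of $P$ together with the formula for $P_\vartheta$ on the real axis, the second inequality is the scalar bound from \Cref{lem:dcconsequence} integrated termwise, and the final equality uses the scaling rule (property (5) of $P_\om$) and the fact that $P$ of the constant $C$ is $C$. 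Replacing $C$ by $\max(C,C)$ and adjusting, if needed, gives the statement.

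The main technical point to check carefully is the applicability of \Cref{prop:Psubharmonic} to $\vartheta(t)=\log(1+t^l)$ even though $\vartheta$ is not of the form $\om_N$ for a weight sequence $N$; this is why I single out the verification of the four bullet conditions as the main (small) obstacle. Everything else is a direct manipulation of Poisson integrals.
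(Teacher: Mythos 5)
Your proof is correct and follows essentially the same route as the paper's: apply \Cref{lem:dcconsequence} to get the scalar inequality, then use additivity and monotonicity of the Poisson extension together with the lower bound \eqref{eq:est3} applied to $\log(1+t^l)$. The explicit verification that $\vartheta(t)=\log(1+t^l)$ satisfies the standing assumptions of \Cref{sec:pomega} is precisely the point the paper leaves implicit when it invokes \eqref{eq:est3} for that function.
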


	\begin{proof}
		By \Cref{lem:dcconsequence}, we have, for some $C>0$,
		\[
		\si(t):= \om_{M^{(l+1)}}(t)+\log(1+t^l)\le \om_{M^{(1)}}(Ct)+C, \quad t \ge 0.
		\]
		It is easy to see that
		$\si$ is a pre-weight function.
		By monotonicity and additivity of $P_\om$ in $\om$, and \eqref{eq:est3} applied to $\log(1+t^l)$, we infer
		\[
		P_{M^{(l+1)}}(z)+\log(1+|z|^l)\le P_\si(z) \le  P_{M^{(1)}}(Cz)+C
		\]
		and are done.
	\end{proof}

	\subsection{Scheme of the proof of \Cref{thm:mainthm2}}

	The key is the following proposition.

	\begin{proposition}[{\cite[Corollary 2.3]{BonetMeiseTaylorSurjectivity}}]
		\label{prop:inclusion}
		Let $E,F, G$ be Fr\'echet--Schwartz spaces and
		let $T \in L(E,F)$ and $R \in L(G,F)$ have dense range.
		Assume that $F'$ endowed with the initial topology with respect to $T^t:F' \rightarrow E'$ is bornological.
		Then the following conditions are equivalent:
		\begin{enumerate}[label=\thetag{\arabic*}]
			\item $R(G) \subseteq T(E)$.
			\item If $B \subseteq F'$ is such that $T^t(B)$ is bounded in $E'$, then $R^t(B)$ is bounded in $G'$.
		\end{enumerate}
		\begin{equation*}
			\begin{tikzcd}
				E	\arrow{r}{T} & F & G \arrow[swap]{l}{R}  && E' &	\arrow[swap]{l}{T^t}  F' \arrow{r}{R^t} & G'
			\end{tikzcd}
		\end{equation*}
	\end{proposition}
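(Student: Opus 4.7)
The plan is to prove the equivalence by a standard duality argument exploiting the Fr\'echet--Schwartz structure together with the bornological hypothesis. First, let $F'_T$ denote $F'$ endowed with the initial topology induced by $T^t\colon F' \to E'$ (all duals carrying their strong topology). A subset $B \subseteq F'$ is bounded in $F'_T$ precisely when $T^t(B)$ is bounded in $E'$, so condition (2) is the statement that $R^t$ sends bounded sets of $F'_T$ to bounded sets of $G'$. By the bornological hypothesis on $F'_T$, this in turn is equivalent to continuity of $R^t\colon F'_T \to G'$, and with this reformulation the proof splits cleanly.

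For (2) $\Rightarrow$ (1), I would fix $g_0 \in G$ and argue as follows. The assignment $\phi \mapsto R^t(\phi)(g_0) = \phi(R(g_0))$ is a continuous linear functional on $F'_T$. Since $T$ has dense range, $T^t$ is injective and embeds $F'_T$ topologically into $E'$; by Hahn--Banach the functional extends to a continuous linear functional on $E'$. Reflexivity of the FS-space $E$ then forces this extension to be evaluation at some $e_0 \in E$, so $\phi(R(g_0)) = \phi(T(e_0))$ for every $\phi \in F'$. A second application of Hahn--Banach in $F$ yields $R(g_0) = T(e_0) \in T(E)$, and since $g_0$ was arbitrary, $R(G) \subseteq T(E)$.

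For (1) $\Rightarrow$ (2), I would form the pullback
\[
P := \{(e,g) \in E \times G : T(e) = R(g)\},
\]
a closed subspace of the FS-space $E \times G$ and thus itself Fr\'echet--Schwartz. The projection $\pi_G\colon P \to G$ is continuous and, by (1), surjective; the open mapping theorem for Fr\'echet spaces then makes it open. In the FS-setting, where bounded sets are relatively compact, this implies that every bounded $A \subseteq G$ admits a bounded lift $B \subseteq P$ with $\pi_G(B) \supseteq A$; setting $C := \pi_E(B)$ produces a bounded $C \subseteq E$ with $T(C) \supseteq R(A)$ by the very definition of $P$. Passing to polars, $T(C)^\circ \subseteq R(A)^\circ$, and since the sets $T(C)^\circ = (T^t)^{-1}(C^\circ)$, $C \subseteq E$ bounded, form a basis of $0$-neighborhoods in $F'_T$, we conclude that $R^t\colon F'_T \to G'$ is continuous, which is (2).

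The main obstacle I anticipate is the quantitative lifting in (1) $\Rightarrow$ (2): promoting the purely set-theoretic inclusion $R(G) \subseteq T(E)$ to a statement about bounded sets requires both the open mapping theorem applied to $\pi_G$ and the relative compactness of bounded sets in FS-spaces (a Grothendieck-type factorization). The reverse direction is comparatively routine once the bornological hypothesis converts the boundedness condition into continuity of $R^t$.
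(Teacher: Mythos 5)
Your argument is correct; note that the paper itself gives no proof of this proposition but simply quotes it as Corollary~2.3 of Bonet--Meise--Taylor, so there is nothing internal to compare against. Your duality proof is a clean self-contained alternative: the reformulation of (2) as continuity of $R^t$ on $F'_T$ (using that bounded sets of the initial topology are exactly those $B$ with $T^t(B)$ bounded, plus the bornological hypothesis) is exactly right, and you correctly deploy the hypotheses where they are needed --- dense range of $T$ for injectivity of $T^t$ and the Hahn--Banach/reflexivity step in $(2)\Rightarrow(1)$, and only there the bornological assumption; the pullback and polar computation in $(1)\Rightarrow(2)$ are fine. The one step you should spell out is the lifting of bounded sets along the open surjection $\pi_G\colon P\to G$: a surjective continuous linear map of Fr\'echet spaces does \emph{not} in general lift bounded sets, so you genuinely need the Montel property. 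The standard chain is: $\overline{A}$ is compact since $G$ is Montel; every compact subset of a Fr\'echet space is contained in the closed absolutely convex hull of a null sequence; null sequences lift through open surjections of metrizable spaces to null sequences; and the closed absolutely convex hull of the lifted null sequence is compact in $P$ by completeness and maps onto a set containing $\overline{A}$. You flag this as the main obstacle and name the right ingredients, so the proof stands, but without that chain the phrase ``this implies that every bounded $A$ admits a bounded lift'' is the only place where a reader could object.
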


	Let $\fM (\preceq )\fN$ be weight matrices, $\fN$ derivation closed and non-quasianalytic.
	We will apply \Cref{prop:inclusion} to
	\begin{equation} \label{eq:diagam0}
		\begin{tikzcd}
			\cE^{(\fN)}(\R)	\arrow{r}{j^\infty_0} & \Lambda^{(\fN)} & \Lambda^{(\fM)} \arrow[swap]{l}{\text{incl}}
		\end{tikzcd}
	\end{equation}
	By \Cref{thm:ultrafunctiondualmatrix} and \Cref{lem:sequencedual},
	we have the following commuting diagram
	\begin{equation}
		\label{diagram}
		\begin{tikzcd}
			\cE^{(\fN)}(\R)'  \arrow[swap]{d}{\cF} & (\Lambda^{(\fN)})'	\arrow[swap]{l}{(j^\infty_0)^t} \arrow{d}{S}
			\arrow{r}{\text{incl}^t}  & (\Lambda^{(\fM)})'  \arrow{d}{S}
			\\
			\cA_{\Om_\fN^+} & \cA_{\Om_\fN} \arrow[swap]{l}{\text{incl}} \arrow{r}{\text{incl}}& A_{\Om_\fM}
		\end{tikzcd}
	\end{equation}
	where the vertical arrows are isomorphisms. This will lead to

	\begin{proposition}
		\label{lem:toshow}
		Let $\fM (\preceq )\fN$ be weight matrices, $\fN$ derivation closed and non-quasianalytic.
		Then the following conditions are equivalent:
		\begin{enumerate}[label=\thetag{\arabic*}]
			\item $\Lambda^{(\fM)} \subseteq j^\infty_0\cE^{(\fN)}(\R)$.
			\item If $B \subseteq \cA_{\Om_\fN}$ is bounded in $\cA_{\Om^+_\fN}$, then $B$ is bounded in $\cA_{\Om_\fM}$.
		\end{enumerate}
	\end{proposition}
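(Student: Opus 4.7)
The plan is to apply the abstract functional-analytic criterion \Cref{prop:inclusion} to the diagram \eqref{eq:diagam0}, taking $E = \cE^{(\fN)}(\R)$, $F = \Lambda^{(\fN)}$, $G = \Lambda^{(\fM)}$, with $T = j^\infty_0$ and $R$ the canonical inclusion (well-defined since $\fM (\preceq) \fN$). Condition (1) of \Cref{prop:inclusion} is literally condition (1) of the lemma. For condition (2), I would transport it via the duality isomorphisms of diagram \eqref{diagram} — the Fourier transform $\cF$ from \Cref{thm:ultrafunctiondualmatrix} and the power-series isomorphism $S$ from \Cref{lem:sequencedual} — into a statement about bounded subsets of $\cA_{\Om_\fN}$, $\cA_{\Om^+_\fN}$, and $\cA_{\Om_\fM}$.

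First I would verify the three hypotheses of \Cref{prop:inclusion}. The Fr\'echet-Schwartz property of $\Lambda^{(\fM)}$ and $\Lambda^{(\fN)}$ follows from the Grothendieck-Pietsch criterion (as was already used in the proof of \Cref{lem:sequencedual}), and the analogous argument for $\cE^{(\fN)}(\R)$ goes through once $\fN$ is replaced by an equivalent matrix with property $(\fN_{(L)})$ via \Cref{lem:ML}; nuclearity then implies Schwartz. Dense range of $T$ and $R$ reduces to the density of polynomials in $\Lambda^{(\fN)}$, which follows by a direct truncation argument, together with the observation that polynomials belong both to $\Lambda^{(\fM)}$ and to $j^\infty_0(\cE^{(\fN)}(\R))$. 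The third hypothesis, bornologicity of $(\Lambda^{(\fN)})'$ under the initial topology induced by $(j^\infty_0)^t$, is the delicate point.

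Next I would perform the translation. Reading off the formulas defining $\cF$ and $S$, one sees that under the isomorphisms of \eqref{diagram} the transpose $(j^\infty_0)^t$ corresponds to the canonical inclusion $\cA_{\Om_\fN} \hookrightarrow \cA_{\Om^+_\fN}$, whereas $\text{incl}^t$ corresponds to the inclusion $\cA_{\Om_\fN} \hookrightarrow \cA_{\Om_\fM}$ (which is well-defined because $\fM (\preceq) \fN$ entails the corresponding comparison between the associated weight functions). With these identifications, condition (2) of \Cref{prop:inclusion} becomes exactly condition (2) of the lemma.

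The main obstacle is the bornologicity of $(\Lambda^{(\fN)})'$ with respect to the initial topology induced by $(j^\infty_0)^t$. The natural strategy is to exploit that, by \Cref{cor:dualproperties}, the isomorphic model $\cA_{\Om_\fN}$ is already ultrabornological, and then to argue that the initial topology in question is comparable to this ultrabornological topology — for example by showing that $(j^\infty_0)^t$ is a topological embedding between the strong duals, or by directly comparing families of bounded sets through the weighted inductive limit structure. This topology-comparison step, together with the precise identification of $(j^\infty_0)^t$ with the inclusion $\cA_{\Om_\fN} \hookrightarrow \cA_{\Om^+_\fN}$, is where the technical work concentrates.
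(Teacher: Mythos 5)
Your setup is the same as the paper's: apply \Cref{prop:inclusion} to the diagram \eqref{eq:diagam0}, check the Fr\'echet--Schwartz property, dense range, and bornologicity, and transport condition (2) through the isomorphisms of \eqref{diagram}. The first two hypotheses and the translation step are fine (the paper gets the Schwartz property of $\cE^{(\fN)}(\R)$ from compactness of the inclusions $\cE^{N^{(1/(k+1))},1/(k+1)}(K)\hookrightarrow\cE^{N^{(1/k)},1/k}(K)$ rather than from nuclearity, but that is a cosmetic difference).

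The genuine gap is at the step you yourself flag as delicate: the bornologicity of $(\Lambda^{(\fN)})'$ in the initial topology induced by $(j^\infty_0)^t$, equivalently of $\cA_{\Om_\fN}$ with the trace topology of $\cA_{\Om_\fN^+}$. Your two suggested strategies do not work as stated. Showing that $(j^\infty_0)^t$ is a topological embedding for the strong dual topologies is essentially asking that the trace topology coincide with the (ultrabornological) inductive limit topology of $\cA_{\Om_\fN}$; this is false in general --- the trace topology is strictly coarser, and its bornologicity is precisely the nontrivial condition being characterized. Likewise, ``comparing families of bounded sets'' does not address the issue, since bornologicity is a statement about which seminorms are continuous, not merely about which sets are bounded. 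The actual argument needed (and given in the paper) runs through \cite[Proposition 2.6]{BonetMeiseTaylorSurjectivity}, which reduces bornologicity of the trace topology to the closure identity $\bigcup_k \overline{Y}_k^{A_k}=\overline{\cA_{\Om_\fN}}^{\cA_{\Om_\fN^+}}$. Proving that identity requires two substantial analytic inputs absent from your proposal: the Phragm\'en--Lindel\"of theorem (\Cref{cor:phrag}), which upgrades the $\cA_{\Om_\fN^+}$-bounds on a convergent sequence to uniform bounds by the harmonic extensions $P^{(k)}$, showing the limit lies in the intermediate space $\cA_{\cP_\fN}$; and Taylor's weighted polynomial approximation theorem \cite[Theorem 1]{Taylor71a} (resting on H\"ormander's $\db$-estimates), combined with \Cref{lem:Apincl}, \Cref{lem:w1}, \Cref{lem:mixedw1}, and \Cref{lem:dcconsequence} --- this is also where derivation closedness of $\fN$ and \Cref{rem:omega1wlog} enter --- to show that every element of $\cA_{\cP_\fN}$ is a limit of polynomials in a single step $A_t$. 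Without these ingredients the proof is not complete.
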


	We will prove \Cref{lem:toshow} in \Cref{sec:proofdual}.
	In \Cref{sec:proofmain2} we will make the connection between condition (2) and the right-hand side of \eqref{L}, and
	thus complete the proof of \Cref{thm:mainthm2}.

	\subsection{Proof of \Cref{lem:toshow}} \label{sec:proofdual}

	The proof is based on the following Phragm\'en--Lindel\"of theorem;
	cf.\ \cite[Theorem 6.5.4]{Boas54}.

	\begin{theorem}
		\label{thm:phrag}
		Let $f$ be holomorphic in the upper half plane and continuous up to the boundary. Assume that the zeros of $f$ have no finite limit point, and
		\begin{equation} \label{eq:PLcond}
			\liminf_{r \rightarrow \infty} \frac{\sup_{|z|=r}\log|f(z)|}{r}<\infty, \quad \int_{-\infty}^\infty\frac{\max(0,\log|f(t)|)}{1+t^2}\,dt< \infty.
		\end{equation}
		Then (writing $z=x+iy$)
		\begin{align*}
			\log|f(z)| \le \frac{y}{\pi} \int_{-\infty}^\infty \frac{\log|f(t)|}{(t-x)^2+y^2}\,dt + \frac{2y}{\pi} \lim_{r\rightarrow \infty} \frac{1}{r} \int_0^\pi \log|f(re^{i\th})|\sin(\th)\,d\th.
		\end{align*}
	\end{theorem}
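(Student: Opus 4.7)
The plan is to establish this as a Carleman-type Poisson--Jensen bound on the upper half disk $D_R^+=\{|\zeta|<R,\ \Im\zeta>0\}$ and then pass to the limit $R\to\infty$ along a sequence realising the $\liminf$ hypothesis. The first task is to remove the zeros of $f$. By hypothesis the zeros $\{z_n\}$ in the open upper half plane are isolated, so for each $R>0$ with no zero of $f$ on the closed semicircle $\{|\zeta|=R,\ \Im\zeta\ge 0\}$ one forms a finite Blaschke-type product $B_R$ adapted to $D_R^+$ (obtained either from the Cayley conformal map $D_R^+\to\mathbb{D}$ or from Schwarz reflection of a disk Blaschke factor), so that $|B_R|=1$ on $\partial D_R^+$, $|B_R|\le 1$ in $D_R^+$, and the quotient $g:=f/B_R$ is holomorphic and non-vanishing on $\overline{D_R^+}$.

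Since $\log|g|$ is harmonic on $D_R^+$ and continuous up to the boundary, the Poisson formula for $D_R^+$ gives, for $z=x+iy\in D_R^+$,
\[
\log|g(z)|=\int_{-R}^{R}P_R(z,t)\,\log|f(t)|\,dt+\int_0^\pi Q_R(z,\theta)\,\log|f(Re^{i\theta})|\,d\theta,
\]
where $P_R$ and $Q_R$ are the Poisson kernels for the real-line and semicircle portions of $\partial D_R^+$, both explicitly computable from the conformal map. A direct asymptotic analysis as $R\to\infty$ yields, for fixed $z=x+iy$ with $y>0$,
\[
P_R(x+iy,t)\;\longrightarrow\;\frac{1}{\pi}\,\frac{y}{(t-x)^2+y^2},\qquad R\,Q_R(x+iy,\theta)\;\longrightarrow\;\frac{2y\sin\theta}{\pi},
\]
uniformly in $t$ on compact subsets of $\mathbb{R}$ and uniformly in $\theta\in[0,\pi]$, respectively.

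To complete the proof I would pick a sequence $R_k\to\infty$ realising $\liminf_r\sup_{|\zeta|=r}\log|f(\zeta)|/r<\infty$. This one-sided growth bound dominates the positive part of the semicircle integrand uniformly after normalising by $1/R_k$, while the non-quasianalytic integrability hypothesis $\int(\log|f(t)|)_+/(1+t^2)\,dt<\infty$ together with the decay of $P_R$ in $t$ controls the real-axis integrand. A Fatou argument, combined with Jensen's formula on $D_{R_k}^+$ applied to $g$ to get the necessary $L^1$-control on the negative parts $(\log|f|)_-$ on both $[-R_k,R_k]$ and the semicircle, then permits taking $\limsup$ on the right-hand side of the Poisson identity. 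Dropping the non-positive term $\log|B_{R_k}(z)|\le 0$ that sits on the left yields exactly the claimed inequality.

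The genuine obstacle is the control of the negative parts $(\log|f|)_-$ on the boundary of $D_R^+$: Fatou applies directly only to the positive parts, which the two hypotheses handle cleanly, but the negative parts need an independent $L^1$-bound uniform in $R$ to allow passage to the limit. The right device is Jensen's formula for $D_R^+$ applied to $g=f/B_R$, which expresses $\log|g(z_0)|$ at some fixed reference point $z_0\in D_1^+$ as the same boundary integrals minus a non-negative sum from the zeros, and hence bounds $\int_{-R}^{R}P_R(z_0,t)(\log|f(t)|)_-\,dt$ and the analogous semicircle integral in terms of the positive parts and $\log|g(z_0)|$; this is where the bulk of the technical bookkeeping sits. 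Once these uniform bounds are in hand, the Fatou-lemma passage to the limit and the explicit kernel asymptotics from Step 2 immediately produce the stated inequality.
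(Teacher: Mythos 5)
The paper itself gives no proof of this theorem: it is quoted directly from Boas \cite[Theorem 6.5.4]{Boas54}, and the proof there is essentially the route you sketch --- Nevanlinna's (Poisson--Jensen) formula on the half-disk $D_R^+$, removal of zeros so that the Green's-function terms have the favourable sign, kernel asymptotics $P_R(z,t)\to\frac{1}{\pi}\frac{y}{(t-x)^2+y^2}$ and $RQ_R(z,\theta)\to\frac{2y\sin\theta}{\pi}$, and a limit passage along radii realizing the $\liminf$ hypothesis. So your architecture is the classical one. Two remarks on the limit passage itself: pointwise (or compact-uniform) convergence of $P_R$ is not enough for the real-axis tails; you should invoke the uniform domination $P_R(z,t)\le \frac{1}{\pi}\frac{y}{(t-x)^2+y^2}$ (harmonic-measure monotonicity of $D_R^+$ inside the half-plane), after which dominated convergence on $(\log|f|)_+$ via the second condition in \eqref{eq:PLcond} and Fatou on $(\log|f|)_-$ do the job; your Jensen-type device for the negative parts is indeed the standard way to exclude the degenerate case $\int(\log|f(t)|)_-\,(1+t^2)^{-1}dt=\infty$.

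The genuine gap is the limit in the conclusion. The theorem asserts the bound with $\lim_{r\to\infty}\frac{1}{r}\int_0^\pi\log|f(re^{i\theta})|\sin\theta\,d\theta$, so its existence is part of the statement, and nothing in your proposal establishes it. Your scheme passes to the limit only along a subsequence $R_k$ chosen to realize $\liminf_r\sup_{|z|=r}\log|f(z)|/r<\infty$ (and to avoid zeros on the semicircles), and thus yields at best the inequality with $\limsup_k\frac{1}{R_k}\int_0^\pi\log|f(R_ke^{i\theta})|\sin\theta\,d\theta$ on the right-hand side. Without knowing the full limit exists, this subsequential quantity is not comparable to the asserted one, and the stated conclusion is out of reach. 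In the source this is precisely what Carleman's formula supplies: it shows that $r\mapsto\frac{1}{r}\int_0^\pi\log|f(re^{i\theta})|\sin\theta\,d\theta$ differs from a nondecreasing function of $r$ by terms (a zero-sum and a real-axis integral) that converge under \eqref{eq:PLcond}, so the limit exists, is finite by the growth hypothesis, and agrees with the value along your subsequence. Your proof needs this Carleman step --- or some equivalent monotonicity argument --- inserted before the Fatou passage; as written, the conclusion you reach is strictly weaker than the theorem.
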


	Every function $f \not \equiv 0$ in $\cA_{\Om_\fN}$, or $\cA_{\Om_\fN^+}$, satisfies the assumptions of \Cref{thm:phrag}. Indeed,
	since $f$ is entire, its zeros cannot have any finite limit point unless $f \equiv 0$.
	Since all $N \in \fN$ are non-quasianalytic and so $\om_N(t) = o(t)$, see \Cref{assofctsection},
	also the conditions \eqref{eq:PLcond} are clear.

	A direct application of this result yields the following corollary.

	\begin{corollary}
		\label{cor:phrag}
		Let $N$ be a non-quasianalytic weight sequence. Let $f \in \cH(\C)$ be such that, for some positive integer $k$,
		\begin{gather*}
			\log|f(z)| = o(|z|) \quad \text{as } |z| \rightarrow \infty
			\quad \text{ and }\quad
			\log|f(x)|	\le  \om_N(kx), ~ x \in \R.
		\end{gather*}
		Then
		\[
		|f(z)| \le e^{ P_{N} (kz)}, \quad  z \in \C.
		\]
	\end{corollary}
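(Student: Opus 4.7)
The plan is to apply the Phragmén--Lindelöf theorem (\Cref{thm:phrag}) to $f$ in the open upper half plane, and then in the lower half plane by symmetry; the real axis is handled separately via \eqref{eq:est3}. If $f \equiv 0$ the claim is trivial, so I assume $f \not \equiv 0$, which ensures that the zeros of $f$ do not accumulate.

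First I verify the two hypotheses in \eqref{eq:PLcond}. The growth condition $\liminf_{r\to\infty} r^{-1}\sup_{|z|=r}\log|f(z)| < \infty$ is immediate from the hypothesis $\log|f(z)| = o(|z|)$. For the real-axis integrability, I use $\log|f(t)| \le \om_N(k|t|)$ together with the non-quasianalyticity of $\om_N$ (i.e., $\int_0^\infty \om_N(t)/(1+t^2)\,dt < \infty$, recalling also that $\om_N(t) = o(t)$) to conclude that $\max(0, \log|f(t)|)/(1+t^2)$ is integrable.

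Next I control the two terms on the right-hand side of the conclusion of \Cref{thm:phrag}. The growth term vanishes: from $\log|f(re^{i\theta})| \le \sup_{|z|=r}\log|f(z)| = o(r)$, I obtain
\[
\frac{1}{r}\int_0^\pi \log|f(re^{i\theta})|\sin(\theta)\,d\theta \le \frac{o(r)}{r}\int_0^\pi \sin(\theta)\,d\theta \longrightarrow 0,
\]
so this term contributes at most $0$ (a negative contribution only helps the bound). For the Poisson-type term, I insert the bound $\log|f(t)| \le \om_N(k|t|)$ (valid since $\om_N \ge 0$ and the Poisson kernel is non-negative) and perform the substitution $u = kt$. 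Writing $z = x+iy$ with $y>0$, the rescaling $(t-x)^2 + y^2 = k^{-2}\bigl((u-kx)^2 + (ky)^2\bigr)$ yields
\[
\frac{y}{\pi}\int_{-\infty}^\infty \frac{\om_N(k|t|)}{(t-x)^2+y^2}\,dt = \frac{ky}{\pi}\int_{-\infty}^\infty \frac{\om_N(|u|)}{(u-kx)^2 + (ky)^2}\,du = P_N(kz),
\]
by the definition \eqref{eq:Pom}. Combining, $\log|f(z)| \le P_N(kz)$ for $y > 0$; by the symmetry of $P_N$ relative to the real axis, the same bound applies for $y<0$, and for $y=0$ it follows from $\log|f(x)| \le \om_N(k|x|) = P_N(kx)$, the last equality being the boundary behavior of the harmonic extension.

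The main (minor) obstacles are bookkeeping: ensuring the boundary integral is handled correctly at zeros of $f$ (where $\log|f(t)|=-\infty$ only helps the upper bound and does not destroy integrability thanks to the upper bound by $\om_N(k|t|)$), and executing the change of variables cleanly so that the factor of $k$ migrates from the integrand into the argument of $P_N$. No further ingredients beyond \Cref{thm:phrag} and the definition of $P_N$ are needed.
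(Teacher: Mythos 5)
Your proof is correct and is exactly the ``direct application'' of \Cref{thm:phrag} that the paper intends (the paper gives no further details): you verify \eqref{eq:PLcond} from $\log|f(z)|=o(|z|)$ and the non-quasianalyticity of $\om_N$, observe that the limit term is $\le 0$, and identify the Poisson integral of $\om_N(k\,\cdot)$ with $P_N(kz)$, which is the paper's listed property $P_{t\mapsto\om(nt)}(z)=P_\om(nz)$. The handling of the lower half plane by symmetry and of the real axis via $P_N(x)=\om_N(x)$ is also as intended.
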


	\begin{proof}[Proof of \Cref{lem:toshow}]
		We have to verify the assumptions of \Cref{prop:inclusion} with the choices of \eqref{eq:diagam0}.

		We saw in the proof of \Cref{lem:sequencedual} that $\Lambda^{(\fN)}$ and $\Lambda^{(\fM)}$ are Fr\'echet--Schwartz spaces.
		For $\cE^{(\fN)}(\R)$,
		this is a consequence of the compactness of the inclusions $\cE^{N^{(\frac{1}{k+1})},\frac{1}{k+1}}(K) \hookrightarrow \cE^{N^{(\frac{1}{k})},\frac{1}{k}}(K)$
		for compact intervals $K \subseteq \R$
		(cf.\ \cite[\S 22, Satz 3.1]{FloretWloka}); here $\cE^{M,a}(K)$ denotes the normed space of all functions $f \in C^\infty(K)$ such that $\|f\|^M_{K,a}<\infty$.

		Both maps in \eqref{eq:diagam0} have dense range, since the finite sequences are dense in $\Lambda^{(\fN)}$.

		Next we prove that $(\Lambda^{(\fN)})'$ endowed with the initial topology with respect to $(j^\infty_0)^t : (\Lambda^{(\fN)})' \to \cE^{(\fN)}(\R)'$ is bornological.
		By \eqref{diagram}, this amounts to showing that
		\begin{equation}
			\label{eq:equiv2}
			\cA_{\Om_\fN} \text{ endowed with the trace topology of } \cA_{\Om_\fN^+} \text{ is bornological}.
		\end{equation}
		To prove \eqref{eq:equiv2}, we set
		\[
		\om^{(k)}(z):=\om_{N^{(\frac{1}{k})}}(z),\quad A_k:= A_{k|\Im z|+\om^{(k)}(kz)}.
		\]
		For every $k\in\N_{\ge 1}$, there exists $l>k$ such that $A_k \hookrightarrow A_l$ is compact; cf.\ \Cref{sec:Om_A}.
		Thus, by \cite[Proposition 2.6$(2)\Leftrightarrow(3)$]{BonetMeiseTaylorSurjectivity},
		\eqref{eq:equiv2} holds if and only if
		\begin{equation}
			\label{eq:equiv3}
			\bigcup_{k \in \N_{\ge 1}} \overline{Y}_k^{A_k}  = \overline{\cA_{\Om_\fN}}^{\cA_{\Om_\fN^+}}, \quad \text{ where } Y_k:= \cA_{\Om_\fN} \cap A_k.
		\end{equation}
		The inclusion $\bigcup_{k \in \N_{\ge 1}} \overline{Y}_k^{A_k}  \subseteq \overline{\cA_{\Om_\fN}}^{\cA_{\Om_\fN^+}}$ is clear and we are left to prove the converse.
		To this end, we will show the two inclusions
		\begin{equation}
			\label{eq:closure}
			\overline{\cA_{\Om_\fN}}^{\cA_{\Om_\fN^+}} \subseteq \cA_{\cP_\fN}\subseteq \bigcup_{k \in \N_{\ge 1}}\overline{Y_k}^{A_k},
		\end{equation}
		where we put $P^{(k)}:=P_{\om^{(k)}}$ and $\cP_\fN:= \big(z \mapsto  P^{(k)}(kz) \big)_k$.

		Let us start with the first inclusion in \eqref{eq:closure}.
		By \eqref{eq:est3} and \Cref{lem:w1} (and \Cref{rem:omega1wlog}),
		for each $k \in \N_{\ge 1}$ there exist $l\in \N_{\ge 1}$ and $A>0$ such that
		\begin{equation}
			\label{eq:estimate}
			\om^{(k)}(z) \le P^{(k)}(z) \le \om^{(l)}(z) + |\Im z| + A, \quad z \in \C.
		\end{equation}
		This shows $\cA_{\Om_{\fN}} \subseteq \cA_{\cP_\fN} \subseteq \cA_{\Om_\fN^+}$.
		Thus it suffices to show that $\cA_{\cP_\fN}$ is closed in $\cA_{\Om_\fN^+}$, i.e.,
		$\cA_{\cP_\fN}\cap A_k$ is closed in $A_k$ for all $k$ (cf.\ \cite[\S 25, Satz 1.2]{FloretWloka}).
		So let $f \in \overline{\cA_{\cP_\fN}\cap A_k}^{A_k}$.
		Then there is a sequence $f_j\in \cA_{\cP_\fN}\cap A_k$ converging to $f$ in $A_k$. Clearly, there exists $C>0$ such that
		\[
		|f_j(z)| \le C e^{k|\Im z| +  \om^{(k)}(kz)}, \quad z\in \C, ~j\in \N.
		\]
		Since $f_j \in \cA_{\cP_\fN}$, there exist $C_j>0$ and $k_j$ such that
		\[
		|f_j(z)| \le C_j e^{P^{(k_j)}(k_jz)}, \quad z \in \C,
		\]
		consequently, $\log|f_j(z)|=o(|z|)$ as $|z|\to \infty$.
		Now \Cref{cor:phrag} implies that all $f_j$ are contained and uniformly bounded in some step of $\cA_{\cP_\fN}$.
		This shows that $f\in \cA_{\cP_\fN}$, and we are done.

		It remains to prove
		the second inclusion in \eqref{eq:closure}.
		To this end, we use \cite[Theorem 1]{Taylor71a} which states the following:
		\emph{Let $(\ph_j)_j$ be an increasing sequence of subharmonic functions on $\C$ converging to some subharmonic function $\ph$,
			and assume that $e^{-\ph_1}$ is locally integrable on $\C$.
			Then any function in $A^2_\ph$ can be approximated in $L^2_{\ph(z)+\log(1+|z|^2)}$ by a sequence in $\bigcup_{k \in \N_{\ge 1}} A_{\ph_k(z) + \log(1+|z|^2)}^2$.}

		Let $f \in \cA_{\cP_\fN}$.
		By \Cref{lem:Apincl},
		there exists $k \in \N_{\ge 1}$ such that $f \in A^2_{\vh}$, where $\vh(z) := 2P^{(k)}(kz)+\log(1+|z|^4)$.
		For this $k$, we introduce the function $\om_j$ by
		\begin{align*}
			\om_j(t):= 2\om^{(k)}(kt), ~ |t|\le j,\quad
			\om_j(t):= a_j\log |t| + b_j, ~ |t|\ge j,
		\end{align*}
		where $a_j, b_j\in\R$ are chosen such that $\om_j$ is continuous, increasing, and $t\mapsto \om_j(e^t)$ is convex.
		Then $\big(\vh_j(z):= P_{\om_j}(z)+\log(1+|z|^4)\big)_j$ is an increasing sequence of subharmonic functions converging to $\vh$;
		cf.\ \Cref{rem:generalweight}. Thus there is a sequence $(f_j)_j$ such that
		$f_j \in A^2_{\vh_j(z) +  \log(1+|z|^2)}$ and
		$f_j \to f$ in $A^2_{\vh(z) +\log(1+|z|^2)}$.
		By \eqref{eq:estimate} and \Cref{lem:dcconsequence},
		there exist $s \in \N_{\ge 1}$ and $K\ge 1$ such that
		\begin{align*}
			\vh(z) +\log(1+|z|^2)
			&\le
			2\om^{(l)}(kz)+ 2k |\Im z| +\log(1+|z|^4) + \log(1+|z|^2) +2A
			\\
			&\le 2 \om^{(s)}(sz)+ 2s |\Im z|+K
		\end{align*}
		for all $z\in\C$ so that $A^2_{\vh(z) +\log(1+|z|^2)} \hookrightarrow A^2_{2 \om^{(s)}(sz)+ 2s |\Im z|}$.
		By \Cref{lem:mixedw1} and \eqref{eq:estimate}, there exist $t \in \N_{\ge 1}$ and $L\ge 1$ such that
		\[
		2 \om^{(s)}(s(z+u))+2 s|\Im (z+u)| \le 2\om^{(t)}(tz)+2t|\Im z|+L, \quad z,u \in \C,~ |u|\le 1.
		\]
		Then \Cref{lem:Apincl} implies $A^2_{\vh(z) +\log(1+|z|^2)} \hookrightarrow  A_{\om^{(t)}(tz)+t|\Im z|} = A_t$.
		Thus $f_j \to f$ in $A_t$.
		Since $P_{\om_j}(z)=O(\log |z| )$ as $|z| \rightarrow \infty$, all $f_j$ are actually polynomials and hence contained in $Y_t$.
		So also the second inclusion in \eqref{eq:closure} is proved.
	\end{proof}

	\subsection{Proof of \Cref{thm:mainthm2}} \label{sec:proofmain2}

	Let $\fM (\preceq) \fN$ be weight matrices, $\fN$ derivation closed and
	non-quasianalytic; cf.\ \Cref{weightfctnonquasisuperfluous}.
	We write $\om^{(k)}(z):= \om_{N^{(\frac{1}{k})}}(z)$.

	We need the following lemma.

	\begin{lemma}  \label{lem:extracted}
		Let $a_j\ge 1$ be a sequence tending to $\infty$ and $k_0$ a positive integer.
		There exist a sequence of polynomials $(p_j)_j$ and $k\in\N_{\ge k_0}$ such that
		$p_j(ia_j) = e^{P^{(k_0)}(ia_j)}$ and
		\begin{align}
			\label{eq:bounded}
			|p_j(z)| \le Ce^{P^{(k)}(Dz)}, \quad z \in \C,~ j\ge 1,
		\end{align}
		with uniform constants $C,D>0$.
	\end{lemma}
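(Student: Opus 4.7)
The plan is to follow the strategy of \cite{BonetMeiseTaylorSurjectivity}: construct, for each $j$, an entire function $F_j$ satisfying $F_j(ia_j) = e^{P^{(k_0)}(ia_j)}$ with the desired weighted growth, then pass to polynomials. A naive monomial construction $p_j(z) = c_j z^{n_j}$ with $c_j = e^{P^{(k_0)}(ia_j)}/(ia_j)^{n_j}$ matches the value at $ia_j$, but produces $|p_j(z)| = e^{P^{(k_0)}(ia_j)} (|z|/a_j)^{n_j}$, which on the disc $|z|\le a_j$ is bounded below (for $z$ fixed, as $j\to\infty$) by arbitrarily large quantities, violating the sought $C e^{P^{(k)}(Dz)}$ bound since $P^{(k)}(0)=0$. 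Hence a more sophisticated construction is needed.

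\textbf{Construction of $F_j$ via $\bar\partial$.} In the open upper half plane $P^{(k_0)}$ is harmonic (\Cref{prop:Psubharmonic}), so it admits a harmonic conjugate $Q_j$, and $G_j(z):=\exp(P^{(k_0)}(z)+iQ_j(z))$ is holomorphic there with $|G_j|=e^{P^{(k_0)}}$; after normalizing $Q_j(ia_j)=0$ we have $G_j(ia_j)=e^{P^{(k_0)}(ia_j)}$. Choose a smooth cutoff $\chi_j$ equal to $1$ on a unit disc around $ia_j$, compactly supported in a slightly larger disc still contained in the upper half plane (possible once $a_j>2$), with $\|\bar\partial\chi_j\|_\infty$ uniformly bounded in $j$. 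Extending $\chi_j G_j$ by zero to $\C$, solve $\bar\partial u_j=\bar\partial(\chi_j G_j)$ by H\"ormander's $L^2$-theorem with the plurisubharmonic weight $\varphi(z):=2P^{(k_0)}(z)+2\log(1+|z|^2)$ (subharmonicity of $2P^{(k_0)}$ is \Cref{prop:Psubharmonic}). On $\supp(\bar\partial\chi_j)$ we have $|G_j|=e^{P^{(k_0)}}$, so the right-hand side $\int|\bar\partial(\chi_j G_j)|^2 e^{-\varphi}\,d\lambda$ is uniformly bounded in $j$. Setting $F_j:=\chi_j G_j-u_j$ gives an entire function with $F_j(ia_j)=e^{P^{(k_0)}(ia_j)}$.

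\textbf{Growth and polynomial approximation.} Converting the $L^2$ bound on $F_j$ to a pointwise estimate via \Cref{lem:Apincl} and absorbing the unit-disc shift using \Cref{lem:mixedw1} yields $|F_j(z)|\le C_1 e^{P^{(k_0)}(z)+C_2\log(1+|z|)}$ with $C_1,C_2$ uniform in $j$. The logarithmic factor is then absorbed into $P^{(k_1)}(D_1 z)$ for some $k_1\ge k_0$ and $D_1\ge 1$ by \Cref{dcforPM} and the derivation closedness of $\fN$. To obtain a polynomial, take the Taylor polynomial $T_j$ of $F_j$ at $0$ of sufficiently high degree $N_j$: since the Taylor series of $F_j$ converges at $ia_j$, we can make $|T_j(ia_j)-F_j(ia_j)|\le 1$, and the density argument used in the proof of \Cref{lem:toshow} (partial sums are uniformly bounded in the weighted space up to enlarging the index) shows $T_j$ satisfies an analogous bound $|T_j(z)|\le C' e^{P^{(k)}(Dz)}$ uniformly in $j$, for a possibly larger $k\ge k_1$. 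Finally set $p_j:=T_j+\bigl(e^{P^{(k_0)}(ia_j)}-T_j(ia_j)\bigr)$; this polynomial satisfies $p_j(ia_j)=e^{P^{(k_0)}(ia_j)}$ exactly, and the additive constant of modulus $\le 1$ is absorbed into $C$.

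\textbf{Main obstacle.} The core difficulty is the uniformity in $j$ of all constants and of the single index $k$. Uniformity in the H\"ormander step requires calibrating the supports and derivatives of $\chi_j$ so that $|\bar\partial(\chi_j G_j)|^2 e^{-\varphi}$ has $j$-independent $L^1$-norm; uniformity in the polynomial step requires controlling the weighted size of the Taylor tail of $F_j$ evaluated in the strip around $ia_j$, which is where the growth of $e^{P^{(k_0)}(ia_j)}$ is concentrated. Both of these are handled by the fact that $a_j\to\infty$ and $\fN$ is derivation closed, which together allow a fixed-size cutoff in the $z$-variable and a uniform log-absorption.
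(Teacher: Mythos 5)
Your overall strategy (local holomorphic data near $ia_j$, a H\"ormander $\bar\partial$-correction, then passage to polynomials) is the same as the paper's, but two steps do not work as written. First, the value-matching: setting $F_j:=\chi_jG_j-u_j$ with $\bar\partial u_j=\bar\partial(\chi_jG_j)$ gives $F_j(ia_j)=e^{P^{(k_0)}(ia_j)}-u_j(ia_j)$, and nothing forces $u_j(ia_j)=0$. The $L^2$ bound only yields $|u_j(ia_j)|\lesssim e^{P^{(k_2)}(ia_j)}$ via the mean value property, which is of the same order as the value you are trying to hit; consequently your final additive correction $e^{P^{(k_0)}(ia_j)}-T_j(ia_j)$ need not have modulus $\le 1$ and, being a constant unbounded in $j$, destroys \eqref{eq:bounded} near $z=0$ (where $P^{(k)}(0)=0$). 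The paper avoids this by building the factor $\bigl(1-\tfrac{z}{ia_j}\bigr)$ into the correction term, so that it vanishes at $ia_j$ automatically; this (or equivalently adding $2\log|z-ia_j|$ to the H\"ormander weight) is the needed repair.

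The second gap is more fundamental: truncating the Taylor series of $F_j$ cannot produce the bound $Ce^{P^{(k)}(Dz)}$. Cauchy estimates control the coefficients only through $\max_{|z|=r}|F_j(z)|$, and since $P^{(k)}$ attains its maximum on circles on the imaginary axis, the partial sums are only bounded by (a multiple of) the \emph{radial} majorant $e^{P^{(k_1)}(2D_1 i|z|)}$. On the real axis the target weight is $P^{(k)}(Dx)=\om^{(k)}(Dx)$, and an estimate $P^{(k_1)}(is)\le\om^{(k)}(Ds)+K$ is precisely a $\prec_L$-type condition inside $\fN$ that is not assumed (for a constant matrix it would be strong non-quasianalyticity of $N$); indeed, if it held, the contradiction argument in the proof of \Cref{thm:mainthm2} would collapse. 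The density argument in \Cref{lem:toshow} that you invoke does not use partial sums: it uses Taylor's weighted $L^2$ polynomial approximation theorem. The paper's actual mechanism is to run H\"ormander's theorem with the \emph{truncated} weights $2P_{\om_j}+\log(1+|z|^2)$, where $\om_j$ agrees with $\om^{(k_2)}$ up to radius $R_j$ (calibrated by \eqref{eq:approxP} so that the data near $ia_j$ keeps a $j$-uniform $L^2$ norm) and is logarithmic beyond; then the resulting entire function lies in $A^2$ of a logarithmic weight and is therefore \emph{already} a polynomial, with the uniform bound \eqref{eq:bounded} obtained afterwards by \Cref{dcforPM}, \Cref{lem:mixedw1}, and \Cref{lem:Apincl}. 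Without this truncation-of-the-weight idea, the passage from $F_j$ to a polynomial with the stated uniform bound is missing.
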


	\begin{proof}
		We follow closely the arguments in the proof of \cite[Proposition 2.3]{MeiseTaylor88}.
		Let $k_1\le k_2$ be chosen such that $N^{(\frac{1}{k_0})} \prec_{s\om_1} N^{(\frac{1}{k_{1}})} \prec_{s\om_1} N^{(\frac{1}{k_{2}})}$;
		cf.\ \Cref{rem:omega1wlog}.
		We can find positive numbers $A_j$, $B_j$, and $R_j$ such that
		\[
		\om_j(t):= \begin{cases}
			\om^{(k_2)}(t),  & |t| \le R_j, \\
			A_j\log |t|  + B_j, & |t| >R_j,
		\end{cases}
		\]
		is continuous, increasing, $t \mapsto \om_j(e^t)$ is convex, $\om_j\le \om^{(k_2)}$,
		and
		\begin{equation} \label{eq:approxP}
			\sup_{|z-ia_j|\le 1} |P^{(k_2)}(z) - P_{\om_j}(z)| \le \frac{1}{j}, \quad \text{ for all } j;
		\end{equation}
		cf.\ \Cref{rem:generalweight}.
		Let $\ph : \C \to  [0,1]$ be a $C^\infty$-function with support contained in the unit disc and $\ph(z)=1$ for $|z|\le \frac{1}{2}$.
		As in \cite{MeiseTaylor88}, we set
		\[
		u_j(z):= \Big(1-\frac{z}{ia_j}\Big)^{-1} e^{P^{(k_0)}(ia_j)} \,\db \ph(z-ia_j).
		\]
		By \Cref{lem:mixedw1}, there is $A>0$ such that, for all $j$,
		\begin{equation}
			\label{eq:k0k2}
			P^{(k_0)}(ia_j) \le P^{(k_2)}(z)+A, \quad |z - i a_j|\le 1.
		\end{equation}
		Thus, there exists $M\ge 1$ such that for all $j$ we have
		\begin{align*}
			\MoveEqLeft \int_{\C} |u_j(z)|^2 e^{-2P_{\om_j}(z) - \log(1+|z|^2)}d\la (z)
			\\  &= \int_{|z-ia_j|\le 1} |u_j(z)|^2 e^{-2 P_{\om_j}(z)- \log(1+|z|^2)}d\la (z) \le M.
		\end{align*}
		Since $\db u_j= 0$,
		we infer from \cite[Theorem 4.4.2]{hoermandercomplex} the existence of
		$v_j \in C^\infty(\C)$ with $\db v_j=u_j$ such that
		\[
		\int_{\C} |v_j(z)|^2 e^{-2 P_{\om_j}(z) - 3\log(1+|z|^2)}\,d\la(z)\le M.
		\]
		Then
		\[
		p_j(z):= \ph(z-ia_j)e^{P^{(k_0)}(ia_j)} - \Big(1-\frac{z}{ia_j}\Big)v_j(z)
		\]
		is entire and $p_j(ia_j) = e^{P^{(k_0)}(ia_j)}$.

		We claim that there exists $M'>0$ such that, for all $j$,
		\begin{equation}
			\label{eq:pjest}
			\int_\C |p_j(z)|^2e^{-2 P_{\om_j}(z) - 4\log(1+|z|^2)} d\la(z) \le  M'.
		\end{equation}
		Indeed, by \eqref{eq:k0k2} and \eqref{eq:approxP},
		\begin{align*}
			\MoveEqLeft \int_\C |\ph(z-ia_j)|^2e^{2P^{(k_0)}(ia_j)}e^{-2 P_{\om_j}(z) - 4\log(1+|z|^2)} d\la(z)\\
			\le & e^{2A}\int_{|z-ia_j|\le 1} e^{2(P^{(k_2)}(z) - P_{\om_j}(z)) -  4\log(1+|z|^2)} d\la(z)\\
			\le & e^{2A}\int_{|z-ia_j|\le 1} e^{\frac{2}{j} -  4\log(1+|z|^2)} d\la(z),
		\end{align*}
		which is bounded in $j$.
		And, since $|1-\frac{z}{ia_j}|^2 \le 2(1+|z|^2)$,
		\begin{align*}
			\MoveEqLeft \int_\C \Big|1-\frac{z}{ia_j}\Big|^2|v_j(z)|^2e^{-2 P_{\om_j}(z) - 4\log(1+|z|^2)} d\la(z)\\
			\le &~2\int_\C |v_j(z)|^2e^{-2P_{\om_j}(z) - 3\log(1+|z|^2)} d\la(z) \le 2M.
		\end{align*}
		This yields \eqref{eq:pjest}.
		Since $2 P_{\om_j}+4\log(1+|z|^2)= O(\log(1+|z|^2))$ as $|z| \to \infty$,
		we infer that $p_j$ is actually a polynomial.

		Let us show \eqref{eq:bounded}. Recall that $P_{\om_j} \le P^{(k_2)}$.
		By \Cref{dcforPM}, we find $k_3 \in \N_{\ge 1}$ and $K>0$ such that
		\[
		P^{(k_2)}(z)+\log(1+|z|^2)\le P^{(k_3)}(k_3z)+K, \quad z \in \C.
		\]
		Together with \eqref{eq:pjest} this yields that
		$(p_j)_j$ is bounded in $A^2_{2 P^{(k_3)}(k_3z)}$.
		Take integers $k_4\le k_5$
		such that
		$N^{(\frac{1}{k_3})} \prec_{s\om_1} N^{(\frac{1}{k_{4}})} \prec_{s\om_1} N^{(\frac{1}{k_{5}})}$.
		By \Cref{lem:mixedw1}, there is $K_1>0$ such that
		\[
		2 P^{(k_3)}(k_3(z+w)) \le 2 P^{(k_5)}(k_5z) +K_1, \quad z,w \in \C,~ |w|\le 1.
		\]
		Combining this with \Cref{lem:Apincl}, we find that $(p_j)_j$ is bounded in $A_{P^{(k_5)}(k_5z)}$,
		which shows \eqref{eq:bounded} and thus finishes the proof.
	\end{proof}

	\begin{proof}[Proof of \Cref{thm:mainthm2}]
		By \Cref{lem:toshow}, we need to show that the following conditions are equivalent:
		\begin{enumerate}[label=\thetag{\arabic*}]
			\item $\forall y>0 \E x>0:~ M^{(x)} \prec_{L} N^{(y)}$.
			\item If $B \subseteq \cA_{\Om_\fN}$ is bounded in $\cA_{\Om^+_\fN}$, then $B$ is bounded in $\cA_{\Om_\fM}$.
		\end{enumerate}

		(1) $\Rightarrow$ (2)
		Let $B\subseteq \cA_{\Om_\fN}$ be bounded in $\cA_{\Om_\fN}^+$.
		So there exist $C >0$ and $k \in \N_{\ge 1}$ such that
		\begin{equation*}
			|f(z)| \le C e^{k|\Im z| + \om^{(k)}(kz)}, \quad z \in \C,~ f \in B,
		\end{equation*}
		where we again use the notation $\om^{(k)}(z):= \om_{N^{(\frac{1}{k})}}(z)$.
		Since $B \subseteq \cA_{\Om_\fN}$, we have
		\[
		|f(z)| \le C_f e^{\om^{(k_f)}(k_fz)}, \quad z \in \C,
		\]
		which yields $\log|f(z)|=o(|z|)$ as $|z| \to \infty$. Then \Cref{cor:phrag} implies
		\[
		|f(z)| \le Ce^{P^{(k)}(kz)}, \quad z \in \C,~ f \in B,
		\]
		where $P^{(k)}:=P_{\om^{(k)}}$. By (1) (and the arguments after \eqref{eq:est3}),
		there exist $l \in \N_{\ge 1}$ and $K>0$ such that
		\[
		P^{(k)}(kz)\le P^{(k)}(ik|z|) \le \om_{M^{(\frac{1}{l})}}(lz)+K, \quad z \in \C.
		\]
		This shows that $B$ is bounded in $\cA_{\Om_\fM}$.

		(2) $\Rightarrow$ (1)
		We argue by contradiction.
		Suppose that there exist $k_0\in \N_{\ge 1}$ and a sequence of real numbers $a_j\ge 1$ tending to infinity
		such that for all $j$
		\begin{equation} \label{eq:2to1contra}
			P^{(k_0)}(i a_j) \ge \om_{M^{(\frac1j)}}(j a_j) + j.
		\end{equation}
		By \Cref{lem:extracted}, there is a sequence of polynomials $(p_j)_j$ and $k\in\N_{\ge k_0}$ such that
		$p_j(ia_j) = e^{P^{(k_0)}(ia_j)}$.
		This gives the desired contradiction:
		The sequence $(p_j)_j$ is contained in
		$\cA_{\Om_\fN}$, since
		$\log(|z|)=o(\om^{(k)}(z))$ as $|z| \to \infty$.
		By \eqref{eq:bounded} and \Cref{lem:w1} (in view of \Cref{rem:omega1wlog}),
		$(p_j)_j$ is bounded in $\cA_{\Om_\fN^+}$.
		But, by \eqref{eq:2to1contra}, for every fixed $l\in\N_{\ge 1}$ and $j\ge l$, we have
		\[
		p_j(i a_j) =\exp\big(P^{(k_0)}(ia_j)\big)\ge  e^{j}\exp\big(\om_{M^{(\frac{1}{j})}} (ja_j)\big)\ge e^{j}\exp\big(\om_{M^{(\frac{1}{l})}} (la_j)\big).
		\]
		Thus $(p_j)_j$ is unbounded in every step of the inductive limit defining $\cA_{\Om_\fM}$ and hence in $\cA_{\Om_\fM}$,
		the limit being regular due to Corollary \ref{cor:dualproperties}.
	\end{proof}

	\subsection{Theorem \ref{thm:mainthm2} without derivation closedness}
	If we do not require derivation closedness in Theorem \ref{thm:mainthm2} for $\fN$,
	we still can infer some information on the image of the Borel map, but for a (in general) smaller class.
	Let us be more precise.
	For a weight matrix $\fN=(N^{(\frac{1}{k})})_{k \in \N_{\ge1}}$,
	we may consider the matrix $\fN_{(dc)}=(N_{(dc)}^{(\frac{1}{k})})_{k \in \N_{\ge1}}$
	consisting of ``shifted'' sequences:
	\[
	(N_{(dc)}^{(\frac{1}{k})})_j:=N_{j-k}^{(\frac{1}{k})} \text{ for } j \ge k, \quad (N_{(dc)}^{(\frac{1}{k})})_j := 1 \text{ for } j <k.
	\]
	Then $\fN_{(dc)}$ is easily seen to be derivation closed, and $N_{(dc)}^{(\frac{1}{k})} \le N^{(\frac{1}{k})}$ for all $k$.
	(For a single weight sequence $N$, we may still perform this construction with $N^{(\frac1k)}:= N$ for all $k$
	which leads to a derivation closed matrix $\fN_{(dc)}$ such that $\cE^{(\fN_{(dc)})} \subseteq \cE^{(N)}$.)

	We get the following version of Theorem \ref{thm:mainthm2}.
	\begin{theorem}
		\label{thm:mainthm2dc}
		Let $\fM,\fN$ be weight matrices. Then
		\begin{align*}
			&\La^{(\fM)} \subseteq j^\infty_0 \cE^{(\fN_{(dc)})}(\R) \\
			&\Longleftrightarrow ~ \A y>0 \A n \in \N \E x,C>0 \A t \ge 0: ~ P_{N^{(y)}}(it) + \log(1+t^n) \le \om_{M^{(x)}}(Ct)+C.
		\end{align*}

	\end{theorem}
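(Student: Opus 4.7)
The strategy is to reduce to Theorem~\ref{thm:mainthm2} applied to the pair $(\fM, \fN_{(dc)})$. First I would verify that $\fN_{(dc)}$ is indeed a weight matrix satisfying the hypotheses of Theorem~\ref{thm:mainthm2}. The sequence $N_{(dc)}^{(1/k)}$ is log-convex since its quotients are $(1,\ldots,1,\mu_1^{(1/k)},\mu_2^{(1/k)},\ldots)$ (a block of $k$ ones followed by the quotients of $N^{(1/k)}$), and it satisfies \eqref{eq:realanalytic}. The matrix is monotone in $k$, and a direct termwise comparison using $N^{(1/(k+1))}\le N^{(1/k)}$ yields $dc(N_{(dc)}^{(1/(k+1))}, N_{(dc)}^{(1/k)})\le 1$, so $\fN_{(dc)}$ is derivation closed. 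Extending from $y=1/k$ to real parameters (say by $N_{(dc)}^{(y)}:=N_{(dc)}^{(1/\lceil 1/y\rceil)}$) then gives a weight matrix.

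The crucial computation is the effect of the shift on the associated weight function. Directly from the definition of $\om_M$, separating $j<k$ from $j\ge k$ and using the normalization of $N^{(1/k)}$, one obtains for all $t\ge 0$
\[
\om_{N_{(dc)}^{(1/k)}}(t) = \om_{N^{(1/k)}}(t) + k\log^+(t),
\]
where $\log^+(t):=\max(\log t,0)$. By additivity of the harmonic extension (property \thetag{4} in \Cref{sec:pomega}),
\[
P_{N_{(dc)}^{(1/k)}}(z) = P_{N^{(1/k)}}(z) + k\,P_{\log^+}(z).
\]

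Next I would bound $P_{\log^+}(is)$. A change of variables $t=su$ in the integral defining $P_{\log^+}$ shows $P_{\log^+}(is)=\log s + o(1)$ as $s\to\infty$, while $P_{\log^+}(is)\to 0$ as $s\to 0^+$; as the function is continuous, there is a constant $K$ with $|P_{\log^+}(is)-\log(1+s)|\le K$ for all $s\ge 0$. Combined with the elementary estimate $\log(1+s^k)\le k\log(1+s)\le \log(1+s^k)+k\log 2$, this yields constants $C_k$ such that
\[
|k\,P_{\log^+}(is)-\log(1+s^k)|\le C_k,\qquad s\ge 0.
\]

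Finally I would assemble the equivalence. Theorem~\ref{thm:mainthm2} applied to $(\fM,\fN_{(dc)})$ gives
\[
\La^{(\fM)}\subseteq j^\infty_0 \cE^{(\fN_{(dc)})}(\R)\iff \forall y>0\,\exists x>0:\ M^{(x)}\prec_L N_{(dc)}^{(y)}.
\]
For the forward direction, given $y>0$ and $n\in\N$, I would pick $k\in\N$ with $k\ge\max(1/y,n)$ and $y':=1/k\le y$, apply the condition to $y'$, then use monotonicity $P_{N^{(y)}}\le P_{N^{(y')}}$ together with $\log(1+s^n)\le \log(1+s^k)\le k\,P_{\log^+}(is)+C_k$ to recover the stated inequality. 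For the backward direction, for each $k$ I would specialize the right-hand side to $y=1/k$ and $n=k$ to obtain $x,C$, and then estimate $P_{N_{(dc)}^{(1/k)}}(is)=P_{N^{(1/k)}}(is)+k\,P_{\log^+}(is)\le P_{N^{(1/k)}}(is)+\log(1+s^k)+C_k\le \om_{M^{(x)}}(Cs)+C+C_k$, obtaining $M^{(x)}\prec_L N_{(dc)}^{(1/k)}$. I expect the main obstacle to be the careful bookkeeping of the $s$-independent constants in the comparison between $k\,P_{\log^+}(is)$ and $\log(1+s^k)$, ensuring that the inequalities remain compatible with the existential quantifier on $x$ in the Langenbruch condition.
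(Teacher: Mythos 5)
Your proposal is correct and follows essentially the same route as the paper: compute the associated weight function of the shifted matrix (which is $\om_{N^{(1/k)}}+k\log^+ t$, equivalently $\om_{N^{(1/k)}}+\log(1+t^k)$ up to a bounded term), transfer this to the harmonic extensions by additivity and monotonicity, and feed the result into \Cref{thm:mainthm2} applied to $(\fM,\fN_{(dc)})$. Your version is in fact somewhat more complete than the paper's three-line argument, since you explicitly verify that $\fN_{(dc)}$ is a derivation-closed weight matrix and that $P_{\log^+}(is)=\log(1+s)+O(1)$, a step the paper leaves implicit in the phrase that the inequalities ``transfer to the respective harmonic extensions.''
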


	\begin{proof}
		Observe that
		\[
		\om_{N^{(\frac{1}{n})}}(t)+\log(1+t^{n})-C \le \om_{N_{(dc)}^{(\frac{1}{n})}}(t)\le \om_{N^{(\frac{1}{n})}}(t)+\log(1+t^{n})+C.
		\]
		These inequalities transfer also to the respective harmonic extensions. And this immediately yields the result via an application of Theorem \ref{thm:mainthm2}.
	\end{proof}

	\section{Comparison and conclusions}
	\label{sec:comparison}

	Let us apply our results to Denjoy--Carleman and Braun--Meise--Taylor classes
	and compare them with the known classical extension results.

	\subsection{Denjoy--Carleman classes}

	Taking $\fM = (M)$ and $\fN = (N)$ in \Cref{thm:mainthm1},
	we recover the Beurling result of \cite{surjectivity} (see also \cite{mixedramisurj}).

	\begin{remark}
		It might be irritating that $\fN = (N)$ clearly fails \eqref{liminfimportant}, which was used in the proof of \Cref{thm:mainthm1} in a crucial way,
		but \Cref{lem:ML} associates with $N$ an equivalent weight matrix with the desired properties (which consists of infinitely many different weight sequences
		that are however all equivalent to $N$).
	\end{remark}

	Let us now discuss \Cref{thm:mainthm2} in this special setting.
	Let $M, N$ be weight sequences, and $N$ in addition derivation closed, such that $(m_k)^{1/k}$ and $(n_k)^{1/k}$ tend to $\infty$.
	Let $\fM=(M^{(x)})_{x>0}$, $\fN=(N^{(y)})_{y>0}$ be the weight matrices equivalent to $M$, $N$, respectively, provided by \Cref{lem:ML}.
	Thus,
	for each $k\in\N_{\ge 1}$ there are constants $A_k,B_k>0$ such that,
	for all $t\ge 0$,
	\begin{gather*}
		\label{eq:equiva}
		\om_{M}(2^kt) -\log(B_k)\le \om_{M^{(\frac{1}{k})}}(t) \le \om_M(2^kt) -\log(A_k),
		\\
		\om_{N}(2^kt) -\log(B_k)\le \om_{N^{(\frac{1}{k})}}(t) \le \om_N(2^kt) -\log(A_k),
	\end{gather*}
	and consequently,
	\begin{equation*}
		\label{eq:equival}
		P_{N}(2^kt) -\log(B_k)\le P_{N^{(\frac{1}{k})}}(t) \le  P_N(2^kt) -\log(A_k).
	\end{equation*}
	This now shows that the right-hand side of \eqref{L} reduces to
	\[
	\E C>0 \A t\ge0:~ P_N(it) \le \om_M(Ct)+C,
	\]
	i.e., $M \prec_{L} N$.

	In this case, \Cref{thm:mainthm2} specializes to a version of \cite[Theorem 2.3]{Langenbruch94} (see also the remarks
	before Corollary 2.4 in said paper).
	Incorporating the Roumieu case (see \cite{surjectivity} and \cite{mixedramisurj}) and
	the implications of \Cref{thm:mainthm1}, we conclude

	\begin{theorem}\label{sequencesrleations}
		Let $M,N$ weight sequences, $N$ derivation closed, with $(m_k)^{1/k} \to \infty$ and $(n_k)^{1/k} \to \infty$. Then the following are equivalent:
		\begin{enumerate}[label=\thetag{\arabic*}]
			\item $\La^{(M)} \subseteq j^\infty_0\cE^{(N)}(\R)$.
			\item $\La^{\{M\}} \subseteq j^\infty_0 \cE^{\{N\}}(\R)$.
			\item $M \prec_{L} N$.
			\item $M \prec_{SV} N$.
		\end{enumerate}
		If $M$ has moderate growth, then the conditions are also equivalent to
		\begin{enumerate}[label=\thetag{5}]
			\item There is $C>0$ such that $\kappa_{N}(s)=O(\om_M(Cs))$ as $s \to \infty$.
		\end{enumerate}
	\end{theorem}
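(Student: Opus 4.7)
My plan is to reduce Theorem \ref{sequencesrleations} to the matrix versions \Cref{thm:mainthm1} and \Cref{thm:mainthm2} by replacing each single weight sequence with an equivalent weight matrix via \Cref{lem:ML}, then unravelling what the matrix-level conditions say concretely. Let $\fM=(M^{(1/k)})_k$ and $\fN=(N^{(1/k)})_k$ be the matrices furnished by \Cref{lem:ML}, equivalent to the constant matrices $(M)$ and $(N)$; by \eqref{eq:equivalent}--\eqref{eq:MLconsequence} we have the explicit estimates
\[
\om_M(2^k t)-\log B_k\le \om_{M^{(1/k)}}(t)\le\om_M(2^k t)-\log A_k,
\]
and similarly for $N$, which transfer to the harmonic extensions $P_{N^{(1/k)}}$ by monotonicity and additivity of $P_\om$ in $\om$. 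The assumptions $(m_k)^{1/k},(n_k)^{1/k}\to\infty$ guarantee \eqref{eq:realanalytic} for $\fM$ and $\fN$, and since $N$ is derivation closed one checks that $\fN$ may also be chosen derivation closed up to equivalence.

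The equivalences $(1)\Leftrightarrow(4)$ and $(1)\Leftrightarrow(3)$ then follow by applying \Cref{thm:mainthm1} and \Cref{thm:mainthm2}, respectively, to $\fM$ and $\fN$: the matrix conditions $\forall y\exists x:M^{(x)}\prec_{SV}N^{(y)}$ and $\forall y\exists x:M^{(x)}\prec_L N^{(y)}$ must be shown to collapse to the single-sequence versions $M\prec_{SV}N$ and $M\prec_L N$. For the $L$-condition this is a direct calculation: the dilation $t\mapsto 2^k t$ in the argument of $\om_N$ and $\om_M$ can be absorbed into the constant $C$ on both sides of \eqref{eq:L}, and the additive constants $\log A_k,\log B_k$ are harmless. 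For the $SV$-condition one uses that in \eqref{eq:SV} the quotient $M_j/(s^j N_i)$ allows replacing $M$ by an equivalent sequence and $N$ by an equivalent sequence at the cost of modifying $s$ and $C$, while the ratio $N_{k-1}/N_k$ is unchanged up to a uniform multiplicative constant under the explicit equivalence of \Cref{lem:ML}. For $(2)\Leftrightarrow(4)$ I would invoke the Roumieu characterization \Cref{thm:SVMain} (together with the cut-off argument used to pass between $\cE^{\{N\}}(\R)$ and $\cD^{\{N\}}([-1,1])$ as in \Cref{sec:right}).

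For the implication concerning (5) under moderate growth of $M$, I would argue as follows. The sandwich \eqref{heirswitch} gives $\ka_N(s)\sim P_N(is)$, hence $(3)$ immediately implies $(5)$. Conversely, $(5)$ yields $P_N(is)\le \tfrac{4}{\pi}\ka_N(s)\le K\om_M(Cs)+K$ for some $K,C$, and the obstacle is that $K$ sits as a multiplicative factor in front of $\om_M$ rather than being absorbed into the argument as in \eqref{eq:L}. This is precisely where moderate growth of $M$ is used: it is equivalent to an inequality of the form $2\om_M(t)\le \om_M(Ht)+K'$, which iterated finitely many times absorbs any multiplicative constant in front of $\om_M$ into a dilation of the argument, yielding $(3)$.

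The main obstacle is the reduction in the second paragraph: verifying that the quantifier-laden matrix conditions literally collapse to the corresponding single-sequence conditions once one substitutes the explicit bounds of \Cref{lem:ML}. In particular, one must keep careful track of how the factors $2^k$ and $A_k,B_k$ interact with the $\sup$ and $\sum$ in the definition \eqref{eq:SV}, so that the quantifier $\forall y\exists x$ over a continuous family of equivalent sequences is genuinely captured by a single inequality between $M$ and $N$. \Cref{constantsinout} warns that this absorption is subtle and generally fails without additional hypotheses (such as moderate growth), which is why condition (5) is only listed as an equivalent formulation under the extra assumption.
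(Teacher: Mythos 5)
Your proposal is correct and follows essentially the same route as the paper: (1)$\Leftrightarrow$(4) and (1)$\Leftrightarrow$(3) come from \Cref{thm:mainthm1} and \Cref{thm:mainthm2} applied to the (constant) matrices associated with $M$ and $N$, with \Cref{lem:ML} and \eqref{eq:MLconsequence} used to see that the matrix conditions collapse to $M\prec_{SV}N$ and $M\prec_L N$, (2)$\Leftrightarrow$(4) from the Roumieu result \Cref{thm:SVMain}, and (3)$\Leftrightarrow$(5) from \eqref{heirswitch} together with the characterization of moderate growth as $2\om_M(t)\le\om_M(Ht)+H$. The only simplification you miss is that for the $SV$-equivalence one can apply \Cref{thm:mainthm1} directly to the constant matrices $\fM=(M)$, $\fN=(N)$ (which trivially satisfy the ordering hypothesis), so no collapsing argument for $\prec_{SV}$ under the equivalence of \Cref{lem:ML} is actually needed.
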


	In fact, that (3) implies (5) follows from \eqref{heirswitch}.
	And, for (5) $\Rightarrow$ (3)
	note that moderate growth of $M$ is equivalent to
	\[
	\exists H\ge 1 \A t\ge 0:\quad 2\omega_M(t)\le\omega_M(Ht)+H,
	\]
	see \cite[Proposition 3.6]{Komatsu73}, which allows to ``move constant factors in front of $\omega_M$ to its argument''.

	Finally, we want to make the connection to the condition $M \prec_{\ga_1} N$ defined by
	\[
	\sup_{j\ge 1}\frac{\mu_j}{j}\sum_{k\ge j}\frac{1}{\nu_k}<+\infty.
	\]
	Note that $M \prec_{\ga_1} M$ is the condition $(\gamma_1)$ in \cite{petzsche} and $(M.3)$ in \cite{Komatsu73}.
	If $M$ is a weight sequence, then $M \prec_{\ga_1} M$ and $M \prec_{SV} M$ are equivalent (see \cite[Theorem 3.6]{surjectivity} and \cite[Theorem 5.2]{mixedramisurj}),
	but in the mixed setting they fall apart, in general.
	For weight sequences $M,N$ such that $M \le C N$ for some $C\ge 1$ we have that
	$M \prec_{\ga_1} N$ implies $M \prec_{SV} N$.
	If additionally $M$ has moderate growth, then $M \prec_{\ga_1} N$ if and only if $M \prec_{SV} N$ since these conditions persist if $M$ (or $N$) is replaced by an equivalent weight sequence and since $M$ has moderate growth if and only if $\mu_k \le C_1 (M_k)^{1/k}$ (see e.g. \cite[Lemma 2.2]{whitneyextensionweightmatrix}).
	Thus, under this additional requirement on $M$, if $M_k \le C N_k$, then also $\mu_k \le C_2 \nu_k$. Invoking \cite[Lemma 5.7]{whitneyextensionmixedweightfunction}, we see that, under these circumstances, $M \prec_{\ga_1} N$ implies
	$\kappa_{N}(s)=O(\om_M(s))$ as $s \to \infty$ as well.

	So we have the following supplement.

	\begin{supplement} \label{supplement}
		In the setting of \Cref{sequencesrleations},
		if $M$ has moderate growth and $M \le C N$, then the conditions (1)--(5) are further equivalent to each of the following conditions:
		\begin{enumerate}
			\item[\thetag{6}] $M \prec_{\ga_1} N$.
			\item[\thetag{7}]  $\kappa_{N}(s)=O(\om_M(s))$ as $s \to \infty$.
		\end{enumerate}
	\end{supplement}

	Clearly, (5) and (7) are equivalent if $\om_M$ is a weight function, but in general it is just a pre-weight function.

	\subsection{Braun--Meise--Taylor classes}

	Let $\Sigma=(S^{(x)})_{x>0}$ and $\Om=(W^{(x)})_{x>0}$ be the matrices associated with the weight functions $\sigma$ and $\om$, respectively.
	By \Cref{Ompropcollect}, the basic assumptions in \Cref{thm:mainthm2} hold for the choices $\fM=\Sigma$, $\fN=\Om$, provided that $\om(t)=o(t)$ and $\sigma(t)=o(t)$ as $t \to \infty$.
	By \eqref{omOMlocallyequal},
	\[
	\La^{(\sigma)}\cong \La^{(\Sigma)},\quad \cE^{(\Om)}(\R) \cong \cE^{(\om)}(\R).
	\]
	In this case, the right-hand side of \eqref{L}, i.e., for all $y>0$ there is $x>0$ with $S^{(x)} \prec_{L} W^{(y)}$ which amounts to
	\begin{equation} \label{Lom}
		P_{W^{(y)}}(is) \le \om_{S^{(x)}}(Cs)+C, \quad s \ge 0,
	\end{equation}
	is equivalent to \eqref{BMT}, i.e.,  $\ka_\om(t) = O(\si(t))$ as $t \to \infty$. Indeed,
	by \Cref{Ompropcollect}, we have $\omega\sim\omega_{W^{(x)}},\sigma\sim\omega_{S^{(x)}}$ and thus, by definition, $\kappa_{\om}\sim\kappa_{W^{(x)}}$ and $P_{\om}\sim P_{W^{(x)}}$ for all $x>0$,
	whence one implication follows from \eqref{heirswitch}.
	Conversely, let $y>0$ be given. Then, for all $x>0$,
	\begin{align*}
		P_{W^{(y)}}(is) \stackrel{\eqref{heirswitch}}{\le}  \frac{4}{\pi}\ka_{W^{(y)}}(s)
		\stackrel{\eqref{goodequivalenceclassic}}{\le} \frac{4}{y\pi}\ka_{\om}(s)
		&\le \frac{4C}{y\pi}(\sigma(s)+1)
		\\
		&\stackrel{\eqref{goodequivalenceclassic}}{\le}
		\frac{4C}{y\pi}(2x \om_{S^{(x)}}(s)+  D_x+1),
	\end{align*}
	and \eqref{Lom} follows if we put $x := \frac{y\pi}{8C}$.

	In this case, \Cref{thm:mainthm2} specializes to \cite[Theorem 3.6]{BonetMeiseTaylorSurjectivity}.
	Incorporating also the Roumieu part \cite[Theorem 3.7]{BonetMeiseTaylorSurjectivity} (see also \cite[Section 5]{RoumieuMoment}) and
	the implications of \Cref{thm:mainthm1}, we find

	\begin{theorem}\label{weighfunction2}
		Let $\omega, \sigma$ be weight functions satisfying $\om(t)=o(t)$, $\sigma(t)=o(t)$ as $t \to \infty$
		and let $\Om=(W^{(x)})_{x>0}$, $\Sigma=(S^{(x)})_{x>0}$ be the associated weight matrices.
		Then the following conditions are equivalent:
		\begin{enumerate}[label=\thetag{\arabic*}]
			\item $\La^{(\si)}\subseteq j^\infty_0\cE^{(\om)}(\R)$.
			\item $\La^{\{\si\}}\subseteq j^\infty_0\cE^{\{\om\}}(\R)$.
			\item $\ka_\om(t) = O(\si(t))$ as $t \to \infty$.
			\item For all $y>0$ there is $x>0$ such that $S^{(x)} \prec_{SV} W^{(y)}$.
			\item For all $y>0$ there is $x>0$ such that $S^{(x)} \prec_{L} W^{(y)}$.
			\item There are $x,y>0$ such that $\ka_{W^{(y)}}(t) = O(\om_{S^{(x)}}(t))$ as $t \to \infty$.
		\end{enumerate}
	\end{theorem}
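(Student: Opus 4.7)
The strategy is to reduce everything to the main theorems already proved, using the structural properties of the weight matrix associated with a weight function collected in \Cref{Ompropcollect}. First I would check that $\Om$ and $\Sigma$ meet the standing hypotheses of \Cref{thm:mainthm1} and \Cref{thm:mainthm2}: condition \eqref{eq:realanalytic} holds because $\om(t)=o(t)$ and $\sigma(t)=o(t)$, both matrices satisfy \eqref{newexpabsorb} (property $(\fM_{(L)})$) and in particular are derivation closed (using \Cref{Ompropcollect}\ref{newmoderategrowth} with $k=1$), and their quotient sequences are ordered by \Cref{Ompropcollect}(1). Non-quasianalyticity of $\om$, which is forced by any of (1)--(3), transfers to $\Om$ by \Cref{Ompropcollect}(5); the analogous statement for $\sigma$ is not needed, since \Cref{assumptionssuperfluous} and \Cref{weightfctnonquasisuperfluous} place the non-quasianalyticity burden on the receiving side.

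Using the identifications $\La^{(\si)}\cong \La^{(\Sigma)}$ and $\cE^{(\om)}(\R)\cong\cE^{(\Om)}(\R)$ from \eqref{omOMlocallyequal}, condition (1) becomes $\La^{(\Sigma)}\subseteq j^\infty_0\cE^{(\Om)}(\R)$. \Cref{thm:mainthm1} applied to $\fM=\Sigma$, $\fN=\Om$ then directly yields (1) $\Leftrightarrow$ (4), and \Cref{thm:mainthm2} yields (1) $\Leftrightarrow$ (5). The equivalence (5) $\Leftrightarrow$ (3) is precisely the calculation worked out in the paragraph preceding the theorem: one direction uses $\kappa_{W^{(y)}}\le \tfrac{1}{y}\kappa_\om$ and $\sigma\le 2x\,\om_{S^{(x)}}+D_x$ from \eqref{goodequivalenceclassic} together with the upper bound $P_{W^{(y)}}(is)\le\tfrac{4}{\pi}\kappa_{W^{(y)}}(s)$ from \eqref{heirswitch}; the reverse direction uses the lower bound $P_{W^{(y)}}(is)\ge\tfrac{1}{\pi}\kappa_{W^{(y)}}(s)$, combined with $\om\le 2y\,\om_{W^{(y)}}+D_y$ and $\om_{S^{(x)}}\le \tfrac{1}{x}\sigma$.

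For (3) $\Leftrightarrow$ (6) the same tools apply, but with the quantifier structure ``$\exists x,y$'' on both sides: given (3), pick any $y>0$, then $\kappa_{W^{(y)}}(t)\le \tfrac{1}{y}\kappa_\om(t)=O(\sigma(t))$ and \Cref{goodequivalenceclassic} gives an $x$ with $\sigma(t)=O(\om_{S^{(x)}}(t))$; conversely, (6) combined with $\kappa_\om\le C\,\kappa_{W^{(y)}}$ and $\om_{S^{(x)}}\le C'\sigma$ for appropriate constants recovers (3). The Roumieu equivalence (2) $\Leftrightarrow$ (3) is not reproved here: it follows from \cite[Theorem 3.7]{BonetMeiseTaylorSurjectivity} together with \cite[Section 5]{RoumieuMoment}.

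I do not anticipate a hard step; the only genuine subtlety is tracking the quantifier structure across translations between weight functions and their associated matrices, since conditions (3) carries no parameter while (4),(5) are ``$\forall y\exists x$'' and (6) is ``$\exists x,y$''. Reconciling these is done uniformly by exploiting that $\om\sim\om_{W^{(x)}}$ and $\sigma\sim\om_{S^{(x)}}$ for \emph{every} $x$, so that the parameter choice on the matrix side is essentially free, and the quantitative equivalences \eqref{goodequivalenceclassic} and \eqref{heirswitch} absorb the resulting constants.
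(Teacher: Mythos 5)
Your proposal is correct and follows essentially the same route as the paper: apply \Cref{thm:mainthm1} and \Cref{thm:mainthm2} to the associated matrices $\Sigma$, $\Om$ (whose hypotheses are verified via \Cref{Ompropcollect}, derivation closedness coming from property (2)), identify the spaces via \eqref{omOMlocallyequal}, translate the matrix conditions into $\ka_\om(t)=O(\si(t))$ using \eqref{goodequivalenceclassic} and \eqref{heirswitch}, and cite the known Roumieu result for (2). Your explicit treatment of (3) $\Leftrightarrow$ (6) fills in a step the paper leaves implicit, but it is the same argument.
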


	Note that any of the six conditions implies that  $\om$ is  non-quasianalytic;
	cf.\  \Cref{weightfctnonquasisuperfluous}.

	\appendix
	\section{Density of entire functions}
	\label{sec:density}

	The following lemma is probably well-known, but we include a proof for the convenience of the reader. The proof closely follows the arguments in
	\cite[Proposition 3.2]{Hoermander85} and \cite[Proposition 3.2]{HeinrichMeise07}.

	\begin{lemma} \label{lem:density}
		Let $M$ be a weight sequence with $(m_j)^{1/j}\rightarrow \infty$.
		Let $f \in \fE^{(M)}(\R)$,
		and $I_k := [-k,k]$.
		Then there exists a sequence of entire functions $f_j$ such that
		$\|f-f_j\|^M_{I_k,m,r} \to 0$ for all $m \in \N$ and $r>0$; see \eqref{eq:snormfE} for the definition of the seminorm.
	\end{lemma}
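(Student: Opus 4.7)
The plan is to construct the entire approximations $f_j$ by convolving a compactly supported truncation of $f$ with the complex Gaussian kernel, following the strategy of H\"ormander and Heinrich--Meise. Fix a target pair $(m, r)$. First I would choose a cutoff $\chi \in C_c^\infty(\R)$ with $\chi \equiv 1$ on a neighborhood $K_1$ of $I_k$ and $\mathrm{supp}\, \chi \subset K_2$ compact, and the entire Gaussian kernel $G_\epsilon(z) := (4\pi\epsilon)^{-1/2} e^{-z^2/(4\epsilon)}$. Then $f_\epsilon(z) := ((\chi f) * G_\epsilon)(z)$ extends to an entire function of $z \in \C$ because $\chi f$ has compact support and $G_\epsilon$ is entire with fast off-real decay. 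On $I_k$ we have $\chi f = f$, so the contribution stemming from the cutoff is exponentially small in $\epsilon^{-1}$ uniformly on $I_k$ (and likewise for all its derivatives), leaving only the intrinsic mollification error to analyse.

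The bulk of the error would be handled via the heat-kernel Taylor expansion. Since $u(\epsilon,\cdot) := f * G_\epsilon$ formally satisfies $\partial_\epsilon u = \partial_x^2 u$ with initial data $f$, one obtains $u(\epsilon, x) = \sum_{p = 0}^{N-1} \frac{\epsilon^p}{p!} f^{(2p)}(x) + R_N(\epsilon, x)$, where $|R_N^{(q)}(\epsilon, x)|$ is controlled by $\frac{\epsilon^N}{N!}$ times $\sup_{0 \le \tau \le \epsilon} |(f * G_\tau)^{(q+2N)}|$ on a neighborhood of $I_k$. Subtracting the $p = 0$ term and differentiating in $x$ yields an analogous expansion for $(f - f_\epsilon)^{(j+l)}(x)$. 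Using $|f^{(j+l+2p)}(x)| \le \|f\|^M_{K_2, m, r'}\cdot (r')^{j+2p} M_{j+2p}$, valid for any $r' > 0$ because $f \in \fE^{(M)}$ and $l \le m$, then dividing by $r^j M_j$ and choosing $r' < r$ so that $(r'/r)^j$ provides geometric decay in $j$, the growth condition $(m_j)^{1/j} \to \infty$ is used to dominate the resulting series by a quantity that tends to $0$ uniformly in $j$, $l$ and $x \in I_k$ as $\epsilon \to 0$.

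The main obstacle is precisely this uniformity in the derivative index $j$: the ratio $M_{j+2p}/M_j = \mu_{j+1} \cdots \mu_{j+2p}$ can grow rapidly with $j$, so one has to balance the small parameter $\epsilon$, the auxiliary scale $r' < r$, and the truncation order $N$ carefully, exploiting the super-factorial growth of $M_j$ implied by $(m_j)^{1/j} \to \infty$ to absorb these ratios. Once the convergence $\|f - f_\epsilon\|^M_{I_k, m, r} \to 0$ is established for each fixed pair $(m, r)$, a standard diagonal argument over the countable fundamental family $\{\|\cdot\|^M_{I_k, m, r} : m \in \N,\ r \in \{1/n : n \in \N_{\ge 1}\}\}$ produces a single sequence $(f_j)$ of entire functions satisfying $\|f - f_j\|^M_{I_k, m, r} \to 0$ for all $m \in \N$ and $r > 0$, as required.
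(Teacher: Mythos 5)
Your construction of the approximants (Gaussian mollification of a compactly supported truncation $\chi f$) is the same as the paper's, and your treatment of the cutoff contribution is fine. The gap is in the error analysis on $I_k$. The heat-kernel expansion forces you to estimate $(f-f_\epsilon)^{(j+l)}$ by derivatives of $f$ of order $j+l+2p$ with $p\ge 1$, and after dividing by $r^jM_j$ you are left with quotients $M_{j+2p}/M_j=\mu_{j+1}\cdots\mu_{j+2p}$ against a geometric factor $(r'/r)^j$. For a general weight sequence these quotients are \emph{not} dominated by any geometric factor: the hypothesis $(m_j)^{1/j}\to\infty$ bounds $M_j$ from below but gives no upper bound on the quotients $\mu_{j+1}$ (take $\mu_j=e^{e^j}$; already $M_{j+2}/M_j=e^{e^{j+1}+e^{j+2}}$ beats $(r/r')^j$ for every $r'<r$, so even for fixed $\epsilon$ the supremum over $j$ of your bound is infinite). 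Neither derivation closedness nor moderate growth is assumed in the lemma, so the ``balancing'' of $\epsilon$, $r'$ and $N$ that you defer to cannot close this. You correctly flag this as the main obstacle, but the tools you list do not overcome it.

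The paper's proof avoids these quotients entirely, by two devices. First, when differentiating $f_j=E_j*\chi f$ it moves all derivatives onto $\chi f$, so that derivatives of the Gaussian occur only in commutator terms against $\chi'$; since $\chi'$ is supported at distance $\ge 1$ from $I_k$, these carry a factor $e^{-C_2 j}$, and the prefactor $(C_1(p-\nu))^{p-\nu}$ is absorbed into $(\tfrac{r}{2})^pM_p$ using $(m_p)^{1/p}\to\infty$ --- the only place that hypothesis is used. Second, the genuine mollification error $|g(x)-E_j*g(x)|$ for $g=f^{(p+l)}$ is estimated by $\tfrac{C_0}{\sqrt j}\bigl(\sup|g'|+2\sup|g|\bigr)$, i.e.\ with the loss of exactly \emph{one} derivative; this loss is absorbed by passing from the seminorm $\|\cdot\|^M_{I_{k+2},m,r}$ to $\|\cdot\|^M_{I_{k+2},m+1,r}$ in \eqref{eq:snormfE}, which is finite because $f\in\fE^{(M)}(\R)$. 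This is precisely what the extra parameter $m$ in the definition of $\fE^{(M)}$ is for. If you replace your Taylor expansion by this first-order estimate, your argument goes through; your concluding diagonal step is then unnecessary, since the single sequence $f_j=E_j*\chi f$ works for all $(m,r)$ simultaneously.
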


	\begin{proof}
		Let $\ch \in C^\infty(\R)$ be a function with compact support and $0 \le \ch \le 1$.
		Suppose that $\ch$ is $1$ on $[-k-1,k+1]$ and $0$ outside $[-k-2,k+2]$.
		Set $E_j(z):=\sqrt{\frac{j}{\pi}}e^{-jz^2}$ and
		$f_j:=E_j\ast \chi f$. Then $f_j$ is entire.
		It is easily seen by induction on $p$ that
		\[
		f_j^{(p)}(x)=E_j\ast \chi f^{(p)}(x)+ \sum_{\nu = 1}^p E_j^{(p-\nu)} \ast \chi' f^{(\nu-1)}(x).
		\]
		This yields
		\begin{equation*}
			\label{eq:app}
			|(f-f_j)^{(p)}(x)| \le |f^{(p)}(x) - E_j\ast \chi f^{(p)}(x)|+\sum_{\nu = 1}^p |E_j^{(p-\nu)} \ast \chi' f^{(\nu-1)}(x)|.
		\end{equation*}
		For $|x|\le k$, we find (see \cite[(3.5)]{HeinrichMeise07})
		\begin{align*}
			\label{eq:app1}
			|f^{(p)}&(x) - E_j\ast \chi f^{(p)}(x)|
			\le \frac{C_0}{\sqrt{j}}\Big( \sup_{|\xi|\le k+1} |f^{(p+1)}(\xi)| + 2\sup_{|\xi|\le k+2}|f^{(p)}(\xi)| \Big),
		\end{align*}
		for some absolute constant $C_0$.
		Moreover, (see \cite[(3.6) and (3.7)]{HeinrichMeise07})
		\begin{align*}
			|E_j^{(p-\nu)} \ast \chi' f^{(\nu-1)}(x)| &\le D \sup_{|\xi|\le k+2}|f^{(\nu-1)}(\xi)|\sup_{|y| \ge 1}|E_j^{(p-\nu)}(y)|\\
			&\le D \sup_{|\xi|\le k+2}|f^{(\nu-1)}(\xi)|(C_1(p-\nu))^{p-\nu}\sqrt{\frac{j}{\pi}} e^{-C_2 j}
		\end{align*}
		for some constant $D$ depending on $\chi$ and absolute constants $C_1,C_2>0$.
		Since $(m_p)^{1/p} \rightarrow \infty$,
		there is a constant $C_3$ such that $(C_1(p+l))^{p+l}\le C_3  (\tfrac{r}{2})^pM_p$ for all $p\in \N$ and $0\le l \le m$.
		Altogether, we find,
		for $|x|\le k$, $p \in \N$, and  $0\le l \le m$,
		\begin{align*}
			&|(f-f_j)^{(p+l)}(x)|\le \frac{3C_0}{\sqrt{j}} \|f\|^M_{I_{k+2},m+1,r} r^pM_p
			\\
			&\quad+ D \sqrt{\frac{j}{\pi}} e^{-C_2 j} \|f\|^M_{I_{k+2},m,r/2} \Big(C_3\sum_{\nu = 1}^{p}  (\tfrac{r}{2})^{p -1} M_{\nu-1} M_{p-\nu}
			+ C_4 \sum_{\nu = p+1}^{p+l}   (\tfrac{r}{2})^p M_p \Big)
			\\
			&\quad\le \|f\|^M_{I_{k+2},m+1,r} r^pM_p \Big( \frac{3C_0}{\sqrt{j}}  + DC_5 \sqrt{\frac{j}{\pi}} e^{-C_2 j}\Big)
		\end{align*}
		for absolute constants $C_4,C_5$.
		This implies $\|f-f_j\|^M_{I_k,m,r} \to 0$ as $j \to \infty$.
	\end{proof}


	\def\cprime{$'$}
	\providecommand{\bysame}{\leavevmode\hbox to3em{\hrulefill}\thinspace}
	\providecommand{\MR}{\relax\ifhmode\unskip\space\fi MR }
	\providecommand{\MRhref}[2]{%
		\href{http://www.ams.org/mathscinet-getitem?mr=#1}{#2}
	}
	\providecommand{\href}[2]{#2}

\end{document}